\newtheorem{thm}{Theorem}[section]
\newtheorem{cor}[thm]{Corollary}
\newtheorem{lem}[thm]{Lemma}
\newtheorem{prop}[thm]{Proposition}
\theoremstyle{definition}
\newtheorem{defn}[thm]{Definition}
\newtheorem{exm}[thm]{Example}
\theoremstyle{remark}
\newtheorem{remark}[thm]{Remark}
\newcommand{\lk}{\mbox{lk}}
\newcommand{\C}{\mathcal{C}}
\newcommand{\cI}{\mathcal{I}}
\newcommand{\cJ}{\mathcal{J}}
\newcommand{\cR}{\mathcal{R}}
\newcommand{\R}{\mathbb{R}}
\newcommand{\PC}{\mathcal{P}}
\newcommand{\QC}{\mathcal{Q}}
\newcommand{\wPC}{\widehat{\PC}}
\newcommand{\wQC}{\widehat{\QC}}
\newcommand{\walpha}{\widehat{\alpha}}
\newcommand{\wtau}{\widehat{\tau}}
\newcommand{\wsigma}{\widehat{\sigma}}
\newcommand{\wDelta}{\widehat{\Delta}}
\newcommand{\wgamma}{\widehat{\gamma}}
\newcommand{\A}{\mathcal{A}}
\title{Associated Primes of Spline Complexes}
\author{Michael DiPasquale}
\begin{document}

\begin{abstract}
The spline complex $\cR/\cJ[\Sigma]$ whose top homology is the algebra $C^\alpha(\Sigma)$ of mixed splines over the fan $\Sigma\subset\R^{n+1}$ was introduced by Schenck-Stillman in \cite{LCoho} as a variant of a complex $\cR/\cI[\Sigma]$ of Billera~\cite{Homology}.  In this paper we analyze the associated primes of homology modules of this complex.  In particular, we show that all such primes are linear.  We give two applications to computations of dimensions.  The first is a computation of the third coefficient of the Hilbert polynomial of $C^\alpha(\Sigma)$, including cases where vanishing is imposed along arbitrary codimension one faces of the boundary of $\Sigma$, generalizing the computations in~\cite{FatPoints,TSchenck08}.  The second is a description of the fourth coefficient of the Hilbert polynomial of $HP(C^\alpha(\Sigma))$ for simplicial fans $\Sigma$.  We use this to derive the result of Alfeld, Schumaker, and Whiteley on the generic dimension of $C^1$ tetrahedral splines for $d\gg 0$~\cite{ASWTet} and indicate via an example how this description may be used to give the fourth coefficient in particular nongeneric configurations.
\end{abstract}

\maketitle

\section{Introduction}
Let $\PC$ be a subdivision of a region in $\R^n$ by convex polytopes.  $C^r(\PC)$ denotes the set of piecewise polynomial functions (splines) on $\PC$ that are continuously differentiable of order $r$. Study of the spaces $C^r(\PC)$ is a fundamental topic in approximation theory and numerical analysis (see \cite{Boor}) while within the past decade geometric connections have been made between $C^0(\PC)$ and equivariant cohomology rings of toric varieties \cite{Paynes}.  Splines are currently used in a wide variety of other applications such as computer aided geometric design (CAGD)~\cite{CAGD} and isogeometric analysis~\cite{Isogeometric}.

A central problem in spline theory is to determine the dimension of (and a basis for) the vector space $C^r_d(\PC)$ of splines whose restriction to each facet of $\PC$ has degree at most $d$.  The spaces $C^r_d(\Delta)$ for simplicial complexes $\Delta$ in $\R^2$ and $\R^3$ have been well-studied using Bernstein-Bezier methods by Alfeld, Schumaker and coauthors~\cite{AS4r,AS3r,ASWTet,Tri}.  A signature result appears in \cite{AS3r}, which gives a dimension formula for $C^r_d(\PC)$ when $d \ge 3r+1$ and $\PC$ is a generic simplicial complex.  

An algebraic approach to the dimension question was pioneered by Billera in \cite{Homology} using homological and commutative algebra.  He introduces a chain complex $\cR/\cI$, whose top homology is the spline algebra.  Using a computation due to Whiteley~\cite{WhiteleyM}, he deduces the dimension of $C^1$ splines over generic triangulations $\Delta\subset\R^2$, solving a conjecture of Strang~\cite{Strang}.  Schenck-Stillman use a similar chain complex $\cR/\cJ$ in~\cite{LCoho} to compute the dimension of $C^r_d(\Delta)$, $\Delta\subset\R^2$, for $d\gg 0$.  In \cite{TSchenck08}, building on work of Rose~\cite{r1,r2} on dual graphs, this method is extended to give the dimension $C^r_d(\PC)$ of splines over a polytopal subdivision $\PC\subset\R^2$ for $d\gg 0$.

The results of this paper are as follows.  Working in the context of fans $\Sigma\subset\R^{n+1}$, we introduce the notation $\cR/\cJ[\Sigma,\Sigma']$ for the spline complex, where $\Sigma'\subset\Sigma$ is a subfan.  This is well-suited to describing the spline complexes that arise from imposing vanishing along codimension one faces of the boundary, in such a way that topological contributions are clear.  Using the notion of a lattice fan, first introduced in~\cite{LS}, we describe localizations of the spline complex $\cR/\cJ[\Sigma,\Sigma']$.  We then prove Theorem~\ref{thm:assPrimes1}, which identifies the associated primes of the homology modules of the spline complex as linear primes arising from the hyperplane arrangement of affine spans of codimension one faces, and Theorem~\ref{thm:assPrimes2}, which identifies more precisely the associated primes of minimal possible codimension (this is a slight extension of~\cite[Theorem~2.6]{Chow}).

We give two applications of these theorems to computations of dimension of the space $\dim C^\alpha(\Sigma)$.  In Section~\ref{sec:Third}, we derive the third coefficient of the Hilbert polynomial of the graded algebra $C^\alpha(\Sigma)$ of mixed splines on the polyhedral fan $\Sigma\subset\R^{n+1}$, where vanishing may be imposed along arbitrary codimension one faces of the boundary of $\Sigma$ (Corollary~\ref{cor:ThirdCoeff}).  This result draws on two papers of Schenck, together with Geramita and McDonald, where the third coefficient is computed in the simplicial mixed smoothness case and the polytopal uniform smoothness case, respectively ~\cite{FatPoints,TSchenck08,Chow}; however no boundary conditions are imposed in either of these papers.  The computation in Section~\ref{sec:Third} also clarifies certain topological contributions to the third coefficient.

In Section~\ref{sec:Fourth}, we describe the fourth coefficient of the Hilbert polynomial of the graded algebra $C^\alpha(\wDelta)$, where $\Delta\subset\R^3$ is a simplicial complex (Proposition~\ref{prop:SimplicialH3MainTerm}).  We use this to recover a result (for $d\gg 0$) of Alfeld, Schumaker and Whiteley on the dimension of $C^1_d(\wDelta)$ for generic $\Delta\subset\R^3$ ~\cite{ASWTet}.  In Example~\ref{ex:3DimMorganScot2} we illustrate how Proposition~\ref{prop:SimplicialH3MainTerm} may be used to compute the fourth coefficient in nongeneric cases.

\section{Polytopal Complexes and Fans}

In this section we introduce polytopal complexes and polyhedral fans, which are the underlying objects over which we define splines.

\begin{defn}\label{def:Polytope}
Fix a vector space $\R^n$ of dimension $n$ and a finite set $V$ of vectors.  The convex polytope determined by $V$ is the set
\[
\sigma=\mbox{conv}(V)=\{\sum\limits_{v\in V} \lambda_v v|\lambda_v\ge 0\in \R \mbox{ and } \sum_{v\in V} \lambda_v=1\}.
\]
The \textit{dimension} of $\sigma$ is the largest dimension of an affine space containing $\sigma$.
\end{defn}

\begin{defn}\label{def:PC}
A \textit{polytopal complex} $\PC\subset\R^n$ is a collection of polytopes satisfying
\begin{enumerate}
\item If $\gamma\in\PC$ then all faces of $\gamma$ are in $\PC$.
\item $\gamma_1\cap\gamma_2\in\PC$ is a face of both $\gamma_1,\gamma_2$, for all $\gamma_1,\gamma_2\in\PC$.
\end{enumerate}
A maximal face of $\PC$ under inclusion is a \textit{facet}.  The dimension of $\PC$ is the maximum dimension of a facet.
\end{defn}

The star of a face $\psi\in\PC$ is defined as
\[
\mbox{st}_\PC (\gamma):=\{\psi\in\PC|\exists\sigma\in\PC, \psi\in\sigma,\gamma\in\sigma \}.
\]
The dual graph $G(\PC)$ of an $n$-dimensional polytopal complex $\PC$ is defined by taking vertices to represent facets $\sigma\subset\PC_n$.  Two vertices in $G(\PC)$ corresponding to facets $\sigma_1,\sigma_2$ are connected by an edge iff $\sigma_1\cap\sigma_2\in\PC_{n-1}$.

We will mostly consider `homogeneous' analogues of the polytopal complexes $\PC$: polyhedral fans.  First we need the `homogeneous' analog of a polytope, which is a cone.

\begin{defn}\label{def:Cone}
Fix a vector space $\R^{n+1}$ and a finite set $V$ of nonzero points (vectors) in $\R^{n+1}$.  The convex polyhedral cone in $\R^{n+1}$ defined by $V$ is the \textit{positive hull} of $V$, namely the set
\[
\sigma=\mbox{cone}(V)=\{\sum\limits_{v\in V} \lambda_v v|\lambda_v\ge 0 \in\R \}.
\]

The \textit{dimension} of $\sigma$ is the largest dimension of an affine space containing $\sigma$.

A \textit{face} of $\sigma$ is either $\sigma$ or the intersection of $\sigma$ with a hyperplane $H$, which passes through the origin and satisfies that $\sigma$ lies on one side of $H$.  Such hyperplanes are called \textit{supporting hyperplanes} of $\sigma$.

A \textit{ray} $\rho$ is the set of nonnegative multiples of a single nonzero vector.
\end{defn}

\begin{defn}\label{def:Fan}
An \textit{polyhedral fan} $\Sigma\subset\R^{n+1}$ is a finite collection of cones (also called \textit{faces} of $\Sigma$) such that
\begin{enumerate}
\item If $\gamma\in\Sigma$, every face of $\gamma$ is in $\Sigma$
\item $\gamma_1\cap\gamma_2$ is a face of both $\gamma_1,\gamma_2$ for every $\gamma_1,\gamma_2\in\Sigma$
\end{enumerate}

A maximal face of $\Sigma$ under inclusion is a \textit{facet}.  The dimension of $\Sigma$ is the maximum dimension of a facet of $\Sigma$.
\end{defn}

Given a polytopal complex $\PC\subset\R^n$, we build a fan $\wPC$, called the \textit{cone over} $\PC$ or \textit{homegenization} of $\PC$, as follows.  Let $\R^n$ have coordinates $x_1,\ldots,x_n$ and $\R^{n+1}$ have coordinates $x_0,\ldots,x_n$.  Then set $i:\R^n\rightarrow\R^{n+1}$ to be the inclusion $i(x_1,\ldots,x_n)=(1,x_1,\ldots,x_n)$.  The cone $\wPC\subset\R^{n+1}$ over $\PC$ is the fan with cones $\mbox{cone}(i(\gamma))$ for $\gamma\in\PC$.  We can go the other direction as well.

\begin{defn}\label{def:raygen}
Given an abstract polyhedral fan $\Sigma$ and $\rho\in\Sigma_1$, the \textit{ray generator} $u_\rho$ of $\rho$ is the unit vector whose positive multiples generate the ray $\rho$.
\end{defn}

Using these ray generators we define two polytopal complexes which we will associate to a polyhedral fan.

\begin{defn}\label{def:FanComplexes}
Let $\sigma\subset\R^d$ be a cone.  Define
\begin{itemize}
\item $\lk(\sigma)=\mbox{conv}(u_\rho|\rho\in\sigma_1)$
\item $\PC(\sigma)=\mbox{conv}(\mathbf{0}\cup \lk(\sigma))$
\item $\lk(\Sigma)=\{\lk(\gamma)|\gamma\in\Sigma\}$
\item $\PC(\Sigma)=\{\PC(\gamma)|\gamma\in\Sigma\}$
\end{itemize}
\end{defn}

We chose unit vectors $u_\rho$ so the definition of above would be canonical, but this does not matter so much - we may refer to $\lk(\Sigma)$ and $\PC(\Sigma)$ as formed using positive scalar multiples of the vectors $u_\rho$.  See Example~\ref{ex:SC}.

We can identify $\lk(\Sigma)$ (topologically) as the intersection of $\Sigma$ with the unit $n$-sphere $\mathbb{S}^n\subset\R^{n+1}$, and $\PC(\Sigma)$ as the intersection of $\Sigma$ with the unit $(n+1)$-ball.  If $\Sigma=\wPC$, then $\lk(\Sigma)$ is homeomorphic to $\PC$.

\begin{defn}\label{def:PolyProperties}
Fix a polytopal complex/polyhedral fan $X\subset\R^n$ of dimension $n$.  Then $X$ is
\begin{enumerate}
\item \textit{Pure} if every facet $\sigma\in X$ has dimension $n$.
\item \textit{Non-branching} if every codimension one face $\tau\in X_{n-1}$ is contained in at most two facets.
\item \textit{Hereditary} if the dual graph $G(\mbox{st}_X(\psi))$ of the star of every face $\psi$ is connected.
\end{enumerate}
\end{defn}

In the definition of a \textit{pseudomanifold} one assumes that $X$ satisfies $(1)$ and $(2)$ and is \textit{strongly connected}, that is the dual graph $G(X)$ is connected.  This is implied by the hereditary condition, since the star of the empty face is $X$.  The hereditary condition is equivalent to requiring that both $X$ and the star of every one of its faces is a pseudomanifold.  We will always assume that $X$ is a hereditary pseudomanifold.

Given a hereditary pseudomanifold $X$, which is a polytopal complex or polyhedral fan, the boundary complex $\partial X$ is the subcomplex of $X$ consisting of all faces which are contained in a codimension one face $\tau$ so that $\tau$ is only contained in a single facet.

We will denote by $X_d,X^0_d,f_d(X),f^0_d(X)$ the set of $d$-faces of $X$, the set of interior $d$-faces of $X$, the number of $d$-faces of $X$, and the number of interior $d$-faces of $X$, respectively.

\begin{exm}\label{ex:SC}
The polytopal complex $\QC$ in Figure~\ref{fig:SC} has vertices $A=(-1,-1),$ $B=(1,-1), C=(1,1), D=(-1,1), A'=(-2,-2), B'=(2,-2), C'=(2,2), D'=(-2,2)$.  It has $5$ facets and $12$ edges.  Figure~\ref{fig:SC} shows the cone $\wQC$ over $\QC$, and Figure~\ref{fig:PFSC} shows the polytopal complex $\PC(\wQC)$.  The complex $\lk(\wQC)$ is the set of all faces of $\PC(\wQC)$ that don't contain the origin.  In Figure~\ref{fig:PFSC} this is the upper hull of the complex; note that $\lk(\wQC)$ is homeomorphic to the original complex $\QC$.

\begin{figure}[htp]
\centering
\begin{subfigure}[b]{.3\textwidth}
\includegraphics[width=\textwidth]{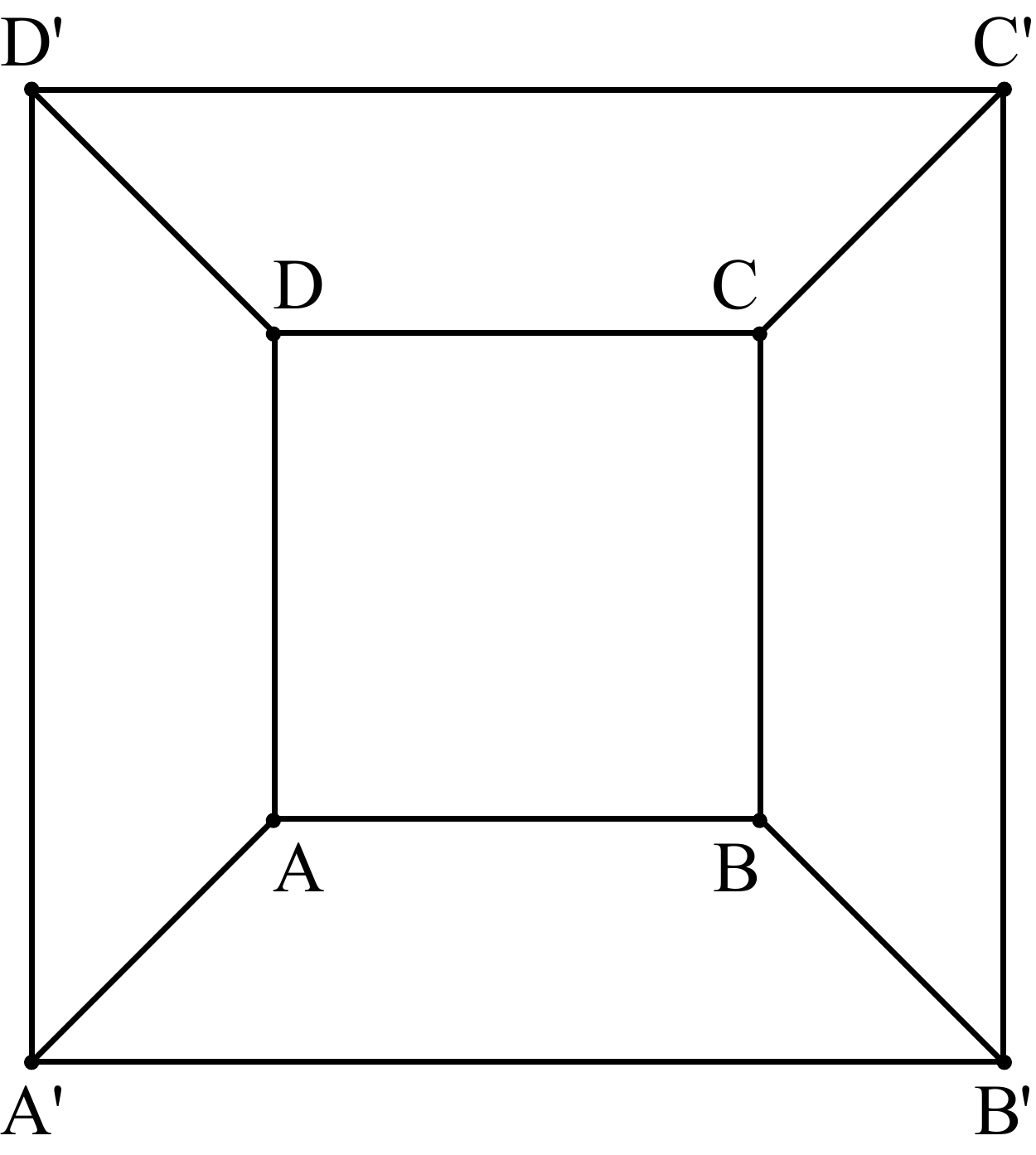}
\caption{A polytopal complex $\QC$}
\label{fig:SC}
\end{subfigure}

\begin{subfigure}[b]{.4\textwidth}
\includegraphics[width=\textwidth]{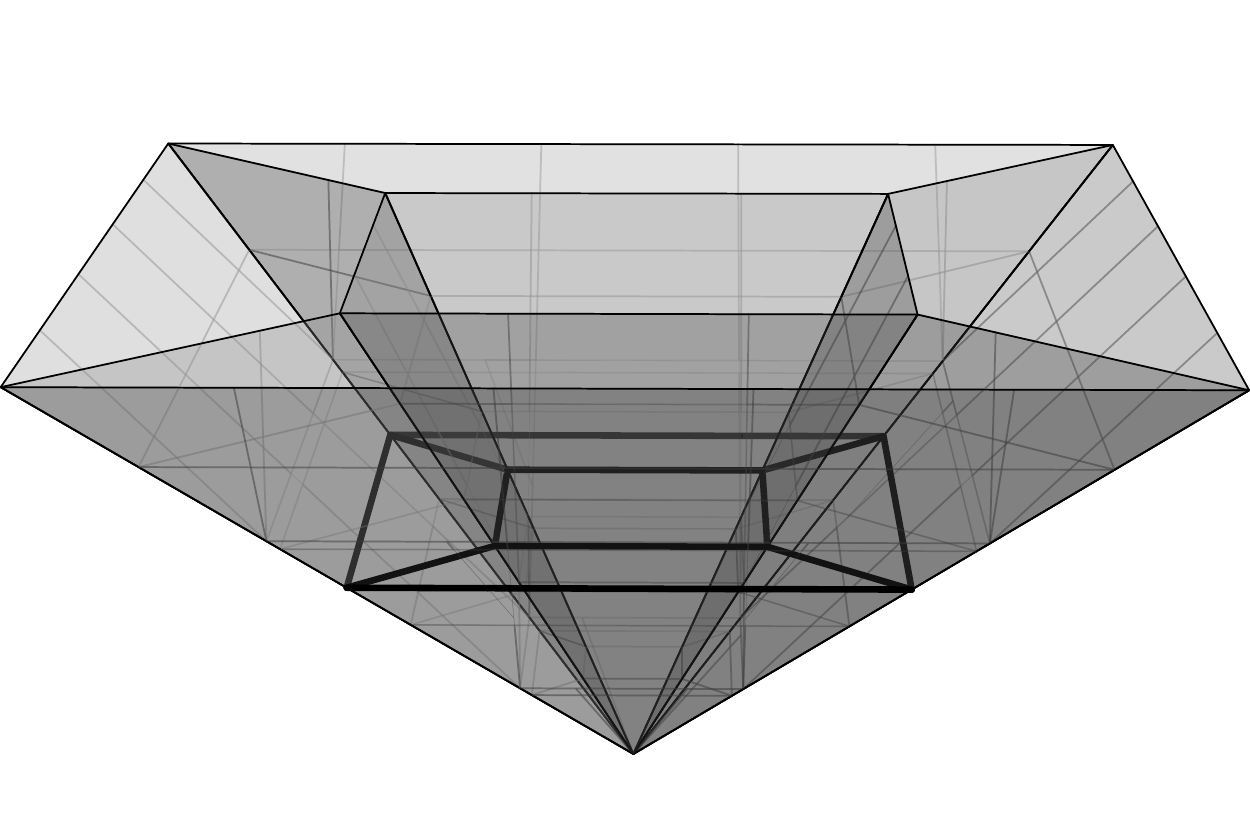}
\caption{The fan $\wQC$ over $\QC$}
\label{fig:FSC}
\end{subfigure}
\begin{subfigure}[b]{.4\textwidth}
\includegraphics[width=\textwidth]{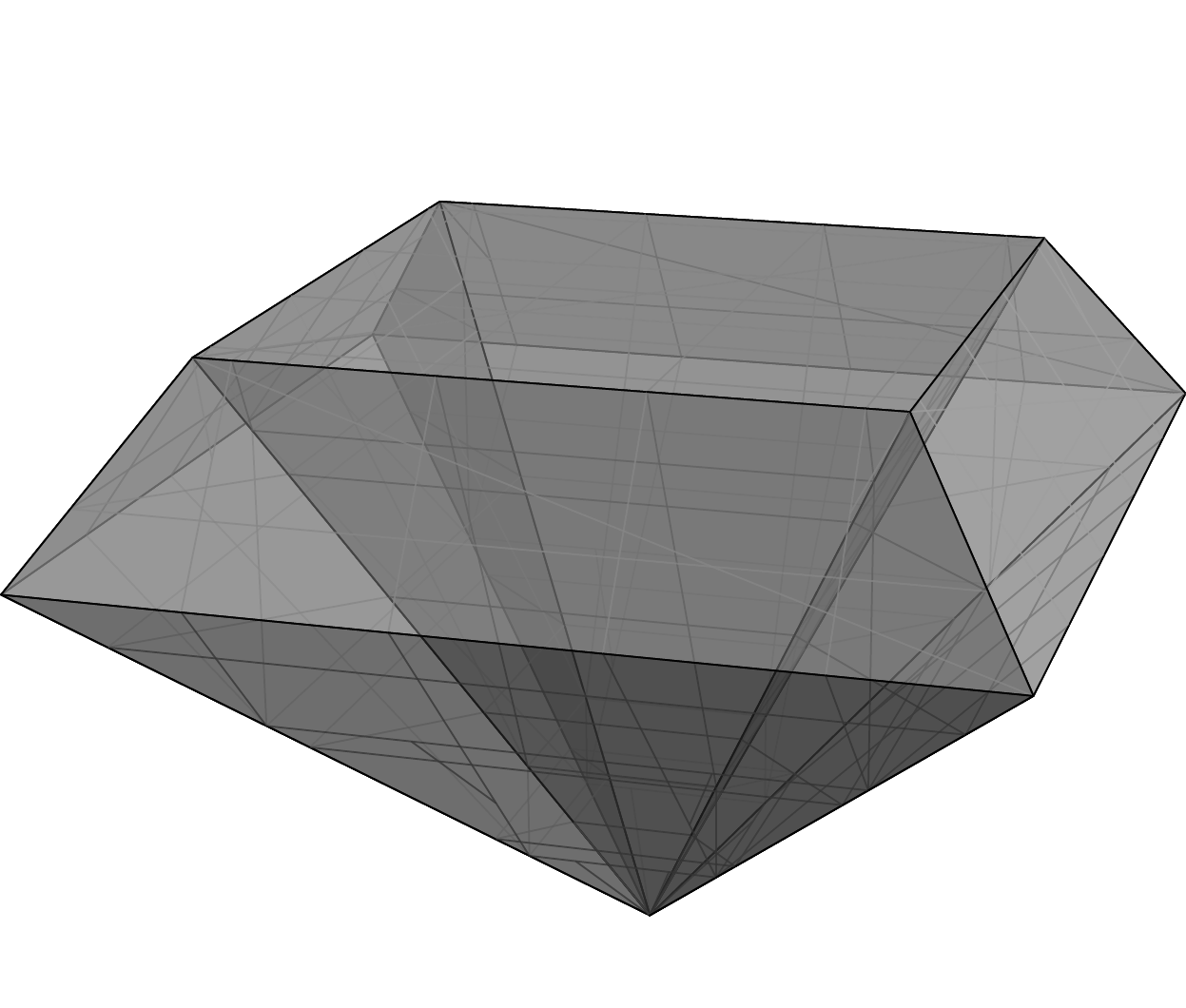}
\caption{The polytopal complex $\PC(\wQC)$}
\label{fig:PFSC}
\end{subfigure}
\caption{}
\end{figure}

\end{exm}

\section{Splines and the Spline Complex}

Given a polytopal complex or polyhedral fan $X \subset\R^n$, we assign integers $\alpha(\tau)\ge -1$, called \textit{smoothness parameters}, to every codimension one face $\tau\in X_{n-1}$ so that $\alpha(\tau)\ge 0$ for every $\tau\in X^0_n$.  We denote by $X^{-1}$ the subcomplex of $\partial X$ whose cones are contained in a codimension one face $\tau$ of $\Sigma$ so that $\alpha(\tau)=-1$.  We also set $X^{\ge 0}_d=X_d\setminus X^{-1}_d$.  The interaction of the pair $(X,X^{-1})$ will be crucial.

Let $\PC\subset\R^n$ be a polytopal complex and $\alpha$ a list of smoothness parameters.  Every codimension one face $\tau$ has a unique affine span $\mbox{aff}(\tau)\subset\R^n$.  Set $R=\R[x_1,\ldots,x_n]$ and let $l_\tau$ be a choice of generator for the ideal $I(\tau)$ of polynomials which vanish on $\tau$ (equivalently vanish on its affine span).

\begin{defn}\label{def:PolySpline}
The algebra of splines $C^\alpha(\PC)$ is the subalgebra of tuples
\[
\{F=(F_\sigma)_{\sigma\in\Sigma_{n+1}}\}\subset \bigoplus_{\sigma\in\PC_n} R
\]
satisfying
\begin{enumerate}
\item $l^{\alpha(\tau)+1}_\tau | (F_{\sigma_1}-F_{\sigma_2})$ for every pair of facets $\sigma_1,\sigma_2$ with $\sigma_1\cap\sigma_2=\tau\in\Sigma_n$.
\item $l^{\alpha(\tau)+1}_\tau| F_{\sigma}$ for every $\tau\in\sigma\cap\partial\PC$, provided this is nonempty.
\end{enumerate}
\end{defn}

If $F\in R$ is a polynomial, we denote by $\mbox{deg}(F)$ the maximal degree of a monomial of $F$.  The vector space $C^\alpha_d(\PC)$ is the set of splines $\{(F_\sigma)\in C^\alpha(\PC)| \mbox{deg}(F_\sigma)\le d\}$.  We can easily extend these definitions to fans.

\begin{defn}\label{def:FanSpline}
Let $\Sigma\subset\R^{n+1}$ be a pure, $(n+1)$-dimensional, hereditary fan, and $\alpha$ a list of smoothness parameters for $\tau\in\Sigma_n$.  Define compatible smoothness parameters on $\PC(\Sigma)$ by assigning $\alpha(\tau)=-1$ on every codimension one face of $\lk(\Sigma)\subset\partial\PC(\Sigma)$.  Set $S=\R[x_0,\ldots,x_n]$.  Then we define the $S$-algebra of mixed splines on $\Sigma$ by
\[
C^\alpha(\Sigma)=C^\alpha(\PC(\Sigma)).
\]
\end{defn}

Recall that the polynomial ring $S$ is naturally graded by degree, where $S_j$ is the vector space of polynomials of degree $j$, and that an $S$-module $M$ is (nonnegatively) graded if $M=\bigoplus_{i\ge 0} M_i$ where each $M_i$ is an $\R$-vector space and the multiplication map satisfies
\[
S_j\times M_i \rightarrow M_{i+j}.
\]
The algebra $C^\alpha(\Sigma)$ is graded by $C^\alpha(\Sigma)_d=\{(F_\sigma)|\mbox{deg} F_\sigma=d\}$.  This is a consequence of the fact that the generators $l_\tau$ of $I(\tau)$ are linear forms for every $\tau\in\Sigma_n$.

If $\PC\subset\R^n$ is any polytopal complex, we can assign smoothness parameters to $\wPC$ in the natural way: $\walpha(\wtau)=\alpha(\tau)$ for every codimension one face $\tau$ of $\PC$.  With this assignment of smoothness parameters, we have the following result of Billera-Rose.

\begin{prop}\label{prop:Homogenization}~\cite[Theorem~2.6]{DimSeries}
$C^\alpha_d(\PC)\cong C^\alpha(\wPC)_d$.
\end{prop}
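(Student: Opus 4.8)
The plan is to set up an explicit isomorphism of $\R$-vector spaces between $C^\alpha_d(\PC)$ and the degree-$d$ graded piece $C^\alpha(\wPC)_d$ by sending a polynomial to its degree-$d$ homogenization and conversely a homogeneous polynomial to its dehomogenization. First I would fix notation: write $R = \R[x_1,\dots,x_n]$ and $S = \R[x_0,\dots,x_n]$, and for $F \in R$ with $\deg F \le d$ let $F^{h,d} \in S_d$ denote the degree-$d$ homogenization, i.e.\ $F^{h,d}(x_0,\dots,x_n) = x_0^d\, F(x_1/x_0,\dots,x_n/x_0)$; conversely for $G \in S_d$ let $G^a = G(1,x_1,\dots,x_n) \in R$ be the dehomogenization. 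On a single polynomial these are mutually inverse $\R$-linear bijections $R_{\le d} \leftrightarrow S_d$, so applying them facet-by-facet gives mutually inverse $\R$-linear bijections $\bigoplus_{\sigma \in \PC_n} R_{\le d} \leftrightarrow \bigoplus_{\sigma \in \wPC_{n+1}} S_d$ (noting that facets of $\wPC$ correspond bijectively to facets of $\PC$). It then remains to check that this bijection carries the subspace $C^\alpha_d(\PC)$ onto the subspace $C^\alpha(\wPC)_d$.

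The heart of the argument is tracking how the linear forms $l_\tau$ behave under homogenization. For a codimension-one face $\tau \in \PC_{n-1}$, its affine span $\mathrm{aff}(\tau) \subset \R^n$ is cut out by a single affine-linear form $l_\tau = a_0 + a_1 x_1 + \cdots + a_n x_n$, and the corresponding codimension-one face $\wtau$ of $\wPC$ has linear span cut out by the linearization $\ell_{\wtau} = a_0 x_0 + a_1 x_1 + \cdots + a_n x_n$, which is exactly $l_\tau^{h,1}$. The key elementary fact is that homogenization is multiplicative up to powers of $x_0$ and compatible with degree: if $F_{\sigma_1} - F_{\sigma_2}$ has degree $\le d$ and $l_\tau^{k} \mid (F_{\sigma_1} - F_{\sigma_2})$ in $R$ (with $k = \alpha(\tau)+1$), then in $S$ we get $\ell_{\wtau}^k \mid (F_{\sigma_1}^{h,d} - F_{\sigma_2}^{h,d})$, because writing $F_{\sigma_1} - F_{\sigma_2} = l_\tau^k H$ and homogenizing to degree $d$ produces $\ell_{\wtau}^k \cdot H^{h, d-k}$ (one must be a little careful that the homogenization of a product to a fixed total degree factors correctly, but this is the standard identity $(PQ)^{h,d} = P^{h, e} Q^{h, d-e}$ when $\deg P \le e$, $\deg Q \le d - e$, together with the fact that $F^{h,d}$ and the naive homogenization differ only by a power of $x_0$, which here is $x_0^0 = 1$ since $\deg(l_\tau^k H) = d$ is used). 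The converse divisibility, going from $S$ to $R$, follows by setting $x_0 = 1$, since $\ell_{\wtau}(1,x_1,\dots,x_n) = l_\tau$ and dehomogenization is a ring homomorphism $S \to R$.

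With the divisibility translation in hand, condition (1) of Definition~\ref{def:PolySpline} for $\PC$ (interior-face smoothness) matches condition (1) for $\wPC$ term by term, using that an interior codimension-one face $\tau$ of $\PC$ corresponds to an interior codimension-one face $\wtau$ of $\wPC$ with the same smoothness parameter. For the boundary condition (2), one observes that in $\wPC$ the new codimension-one faces of the form $\lk(\wPC)$ (the ``outer'' faces, not coming from $\PC$) are assigned smoothness parameter $-1$, so they impose no condition; the remaining boundary faces of $\wPC$ are precisely the cones $\wtau$ over boundary codimension-one faces $\tau$ of $\PC$, and condition (2) for these translates exactly to condition (2) for $\PC$ via the same homogenization/dehomogenization dictionary. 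Thus the bijection restricts to the claimed isomorphism, and since it is $\R$-linear in each direction we are done.

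The step I expect to be the main obstacle — more bookkeeping than conceptual difficulty — is the careful handling of homogenization to a \emph{fixed} degree $d$ rather than to the natural degree $\deg F$: one must confirm that the map $F \mapsto F^{h,d}$ is $\R$-linear on $R_{\le d}$ (it is, whereas $F \mapsto F^{\deg F}$ is not), that it is a bijection onto $S_d$ with inverse dehomogenization, and that it interacts correctly with the factorizations $F_{\sigma_1} - F_{\sigma_2} = l_\tau^k H$ even when this difference has degree strictly less than $d$ (in which case the extra $x_0$-power rides along on the cofactor $H$, not on $\ell_{\wtau}$). Once these formal properties of degree-$d$ homogenization are pinned down, the verification that the spline conditions correspond is routine. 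This is, of course, the content of Billera--Rose~\cite[Theorem~2.6]{DimSeries}, which we may simply cite; the sketch above indicates how the proof goes.
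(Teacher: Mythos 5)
Your sketch is correct. The paper gives no proof of this proposition — it is stated with a citation to Billera--Rose~\cite{DimSeries} and nothing more — so there is no ``paper's proof'' to compare against beyond the reference. Your argument is the standard homogenization/dehomogenization proof (and is, to my recollection, essentially what Billera and Rose do): the degree-$d$ homogenization $F \mapsto F^{h,d} = x_0^d F(x_1/x_0,\dots,x_n/x_0)$ and evaluation at $x_0=1$ give mutually inverse $\R$-linear bijections $R_{\le d} \leftrightarrow S_d$; the affine form $l_\tau$ homogenizes to the linear form $\ell_{\wtau}$ cutting out the linear span of $\wtau$; divisibility $l_\tau^k \mid (F_{\sigma_1}-F_{\sigma_2})$ in $R_{\le d}$ transfers to $\ell_{\wtau}^k \mid (F_{\sigma_1}^{h,d}-F_{\sigma_2}^{h,d})$ in $S_d$ with the excess $x_0$-power absorbed into the cofactor, and back again by setting $x_0=1$; and the faces of $\lk(\wPC)$ carry smoothness $-1$, hence impose no condition. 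You correctly flagged the one real point requiring care — that one must homogenize to a \emph{fixed} degree $d$ to get $\R$-linearity and to make the two subspaces match degree by degree — and you resolved it correctly.

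One small bookkeeping remark: the algebra $C^\alpha(\wPC)$ is defined in this paper via $C^\alpha(\PC(\wPC))$, so the target direct sum is formally indexed by facets of $\PC(\wPC)$ rather than of $\wPC$; these are in canonical bijection with facets of $\PC$, so nothing in your argument changes, but it is worth phrasing the bijection through $\PC(\wPC)$ to line up with Definition~\ref{def:FanSpline} and Definition~\ref{def:PolySpline} as literally stated.
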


The following lemma, also due to Billera-Rose, provides a useful tool for computing $C^\alpha(\PC)$.

\begin{lem}\label{lem:SplineMatrix}
$C^\alpha(\PC)$ is (isomorphic to) the kernel of the map
\[
\phi:S^{f_n(\PC)} \oplus \left(\bigoplus\limits_{\tau\in\PC_{n-1} } S(-\alpha(\tau)-1)\right) \rightarrow S^{f_n(\PC)},
\]
where $\phi$ is the matrix
\[
\begin{pmatrix}
& \vline & l^{\alpha(\tau_1)+1}_{\tau_1} & & \\
\delta_n & \vline  & & \ddots & \\
& \vline & & & l^{\alpha(\tau_k)+1}_{\tau_k}
\end{pmatrix},
 \]
$k=|\PC^{\ge 0}_{n-1}|$, $C=\textup{coker } \phi$ and the matrix $\delta_n$ is the top dimensional cellular boundary map of $\PC$ relative to $\PC^{-1}$.
\end{lem}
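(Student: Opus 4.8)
The plan is to directly compare the defining conditions of $C^\alpha(\PC)$ in Definition~\ref{def:PolySpline} with the kernel of the stated map $\phi$. An element of the source of $\phi$ is a pair $(G, (H_\tau)_{\tau\in\PC_{n-1}^{\ge 0}})$ with $G\in S^{f_n(\PC)}$ and $H_\tau\in S(-\alpha(\tau)-1)$. Writing $G=(G_\sigma)_{\sigma\in\PC_n}$ and applying $\phi$, the image is $\delta_n(G) + \sum_\tau l_\tau^{\alpha(\tau)+1} H_\tau e_\tau$, where $e_\tau$ is the standard basis vector indexed by $\tau$ and we identify the target $S^{f_n(\PC)}$ with $\bigoplus_{\tau} S\,e_\tau$ via the cellular chain groups in degree $n-1$ (relative to $\PC^{-1}$). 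Thus $(G,(H_\tau))$ lies in $\ker\phi$ if and only if, for every $\tau\in\PC_{n-1}^{\ge 0}$, the $\tau$-component of $\delta_n(G)$ equals $-l_\tau^{\alpha(\tau)+1}H_\tau$, i.e. $l_\tau^{\alpha(\tau)+1}$ divides the $\tau$-component of $\delta_n(G)$.

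First I would spell out the relative cellular boundary map $\delta_n$ explicitly: after orienting facets and codimension one faces, the $\tau$-component of $\delta_n(G)$ is $\pm(G_{\sigma_1}-G_{\sigma_2})$ when $\tau$ is an interior face shared by facets $\sigma_1,\sigma_2$, and it is $\pm G_\sigma$ when $\tau\in\partial\PC$ is contained in the single facet $\sigma$ and $\tau\notin\PC^{-1}$ (faces in $\PC^{-1}$ are quotiented out, so they impose no condition). Matching these two cases against conditions (1) and (2) of Definition~\ref{def:PolySpline} shows that the divisibility constraints defining $\ker\phi$ are exactly the spline conditions. Then I would observe that the projection $(G,(H_\tau))\mapsto G$ sends $\ker\phi$ isomorphically onto $C^\alpha(\PC)$: it is injective because once $G$ is fixed each $H_\tau$ is forced (it equals $\mp l_\tau^{-\alpha(\tau)-1}$ times the $\tau$-component of $\delta_n(G)$, which is a well-defined polynomial precisely when $G$ is a spline, since $S$ is a domain), and it is surjective because any spline $G$ determines such $H_\tau$. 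The grading statement follows because each $l_\tau$ is linear, so $l_\tau^{\alpha(\tau)+1}$ is homogeneous of degree $\alpha(\tau)+1$, which is exactly the twist $S(-\alpha(\tau)-1)$ built into the source; the map $\phi$ is then graded of degree $0$.

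The only real subtlety — and the step I expect to require the most care — is the bookkeeping around the boundary and the subcomplex $\PC^{-1}$: one must check that using the \emph{relative} boundary map $\delta_n$ (relative to $\PC^{-1}$) together with restricting the direct sum over $\tau$ to $\PC_{n-1}^{\ge 0}$ correctly encodes condition (2), including the convention $\alpha(\tau)\ge 0$ for interior faces and the fact that faces with $\alpha(\tau)=-1$ contribute no relation (consistent with $l_\tau^{0}=1$ dividing everything, but handled cleanly by passing to the relative complex so these rows simply do not appear). Once the dictionary between the matrix entries of $\phi$ and the two spline conditions is established, the isomorphism $\ker\phi\cong C^\alpha(\PC)$ and the identification $C=\operatorname{coker}\phi$ are immediate.
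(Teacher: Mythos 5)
Your argument is correct and is just a careful unwinding of what the paper's one-line proof asserts: reading off the kernel of the block matrix $\phi$ componentwise reproduces exactly the divisibility conditions of Definition~\ref{def:PolySpline}, and projecting $\ker\phi$ onto the $S^{f_n(\PC)}$-factor gives the isomorphism with $C^\alpha(\PC)$ (the auxiliary $H_\tau$ being uniquely determined since $S$ is a domain). This is the same approach as the paper, merely spelled out; the only small thing worth noting is that the direct sum in the source and the target are really indexed by $\PC^{\ge 0}_{n-1}$ (i.e.\ $k$ summands), matching the diagonal block and the relative boundary map, which you handle correctly in your discussion of $\PC^{-1}$.
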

\begin{proof}
This is an expression of the divisibility conditions in Definition~\ref{def:PolySpline} in the form of a matrix.
\end{proof}

At this point we switch to exclusively using fans.  One could equivalently use central polytopal complexes instead; we use fans to emphasize that no conditions are imposed on faces which do not contain the origin.

Following Billera in ~\cite{Homology}, we extend the top dimensional boundary map in~\ref{lem:SplineMatrix} to a complex taking into account information of all relevant lower dimensional faces.  It will be useful to do this for an arbitrary pair of fans $(\Sigma,\Sigma')$, where $\Sigma'\subset\Sigma$ is a subfan.

\begin{defn}\label{def:TopComplex}
Let $\Sigma\subset\R^{n+1}$ be a fan with smoothness parameters $\alpha$, $\Sigma'\subset\Sigma$ a subfan, and set $S=\R[x_0,\ldots,x_n]$.  Define the complex $\cR[\Sigma,\Sigma']$ with the following modules in homological degree $i$ for $i=0,\ldots,n+1$.
\[
\begin{array}{rl}
\cR[\Sigma,\Sigma']_i= & \bigoplus\limits_{\gamma\in(\Sigma_i\setminus\Sigma'_i)} S\\
=& \bigoplus\limits_{\gamma\in(\PC(\Sigma)_i\setminus(\PC(\Sigma')_i\cup \lk(\Sigma)_i)} S,
\end{array}
\]
where the differential $\delta_i: \cR[\Sigma,\Sigma']_i\rightarrow \cR[\Sigma,\Sigma']_{i-1}$ is the cellular differential of the relative chain complex of the pair $(\PC(\Sigma),\lk(\Sigma)\cup \PC(\Sigma'))$ with coefficients in $S$.
\end{defn}

Given a fan $\Sigma$ with smoothness parameters $\alpha$, we associate ideals to its faces as follows.  For a codimension one face $\tau\in\Sigma^{\ge 0}_n$, set  $J(\tau)=\langle l^{\alpha(\tau)+1}_\tau \rangle$.  For any non-facet $\gamma\in\Sigma$,
\[
J(\gamma):=\sum\limits_{\gamma\in\tau\in\Sigma^{\ge 0}_n} J(\tau);
\]
if $\sigma\in\Sigma_{n+1}$, set
\[
J(\sigma):=0.
\]

\begin{defn}\label{def:QEcomplex}
Let $\Sigma'\subset\Sigma$ be a subfan of an $(n+1)$-dimensional fan $\Sigma\subset\R^{n+1}$ with smoothness parameters $\alpha$, and set $S=\R[x_0,\ldots,x_n]$.  Define complexes $\cJ[\Sigma,\Sigma'],\cR/\cJ[\Sigma,\Sigma']$ with the following modules in homological degree $i$ for $i=0,\ldots,n+1$.
\[
\begin{array}{rl}
\cJ[\Sigma,\Sigma']_i= & \bigoplus\limits_{\gamma\in(\Sigma_i\setminus\Sigma'_i)} J(\gamma)\\
\cR/\cJ[\Sigma,\Sigma']_i= & \bigoplus\limits_{\gamma\in(\Sigma_i\setminus\Sigma'_i)} S/J(\gamma).
\end{array}
\]
The differentials of $\cJ[\Sigma,\Sigma'],\cR/\cJ[\Sigma,\Sigma']$ are obtained by restricting and quotienting the differential of $\cR[\Sigma,\Sigma']$.
\end{defn}

\begin{lem}\label{lem:SplinesTop}
Let $\Sigma\subset\R^{n+1}$ be a pure, hereditary, $(n+1)$-dimensional fan with smoothness parameters $\alpha$, and let $\cR/\cJ[\Sigma,\Sigma^{-1}]$ be as in Definition~\ref{def:QEcomplex}.  Then
\[
H_{n+1}(\cR/\cJ[\Sigma,\Sigma^{-1}])=C^\alpha(\Sigma).
\]
\end{lem}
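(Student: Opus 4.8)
The plan is to unwind the definitions and identify $H_{n+1}(\cR/\cJ[\Sigma,\Sigma^{-1}])$ with the kernel of the matrix map $\phi$ from Lemma~\ref{lem:SplineMatrix}, which is already known to compute $C^\alpha(\Sigma)$. The top of the complex $\cR/\cJ[\Sigma,\Sigma^{-1}]$ looks like
\[
\cR/\cJ[\Sigma,\Sigma^{-1}]_{n+1}\xrightarrow{\ \overline{\delta}_{n+1}\ }\cR/\cJ[\Sigma,\Sigma^{-1}]_{n},
\]
and since there is no module in homological degree $n+2$, the top homology is simply $\ker\overline{\delta}_{n+1}$. So the whole lemma reduces to identifying this kernel.

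First I would describe the two modules explicitly. In degree $n+1$ the faces $\gamma\in\Sigma_{n+1}\setminus\Sigma^{-1}_{n+1}$ are exactly the facets of $\Sigma$ (there are no $(n+1)$-dimensional faces in $\Sigma^{-1}$, and each contributes $S/J(\sigma)=S/0=S$), giving $S^{f_n(\PC(\Sigma))}$ — matching the free summand $S^{f_n(\PC)}$ in Lemma~\ref{lem:SplineMatrix} after applying Proposition~\ref{prop:Homogenization}/Definition~\ref{def:FanSpline}. In degree $n$ the faces are the codimension one faces of $\Sigma$ not in $\Sigma^{-1}$; here each contributes $S/J(\tau)$, and $J(\tau)$ depends on whether $\tau$ is interior or on the boundary: for an interior facet of $\lk(\Sigma)$ (i.e. $\tau\in\Sigma^0_n$) we get $S/\langle l_\tau^{\alpha(\tau)+1}\rangle$, while for a boundary $\tau$ with $\alpha(\tau)\ge 0$ we still get $S/\langle l_\tau^{\alpha(\tau)+1}\rangle$, and the $\tau$ with $\alpha(\tau)=-1$ are precisely those removed by passing to $\Sigma\setminus\Sigma^{-1}$. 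Thus $\cR/\cJ[\Sigma,\Sigma^{-1}]_n=\bigoplus_{\tau\in\Sigma^{\ge 0}_n}S/\langle l_\tau^{\alpha(\tau)+1}\rangle$.

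Next I would compare the differential $\overline{\delta}_{n+1}$ with the map $\phi$. The map $\delta_{n+1}$ on $\cR[\Sigma,\Sigma^{-1}]$ is the relative cellular boundary of $(\PC(\Sigma),\lk(\Sigma)\cup\PC(\Sigma^{-1}))$ with coefficients in $S$; after the quotient, a facet $\sigma$ maps to the tuple $(\pm\,\overline{F_\sigma})_\tau$ over its codimension one faces $\tau$, with the image of $F_\sigma$ in $S/\langle l_\tau^{\alpha(\tau)+1}\rangle$. An element $(F_\sigma)\in S^{f_n}$ lies in $\ker\overline{\delta}_{n+1}$ iff for every $\tau$ the signed sum of the $F_\sigma$ over facets $\sigma\supset\tau$ vanishes in $S/\langle l_\tau^{\alpha(\tau)+1}\rangle$. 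By the non-branching property each such $\tau\in\Sigma^0_n$ lies in exactly two facets $\sigma_1,\sigma_2$, so this says $l_\tau^{\alpha(\tau)+1}\mid (F_{\sigma_1}-F_{\sigma_2})$ (condition (1) of Definition~\ref{def:PolySpline}), while for a boundary $\tau\in\Sigma^{\ge 0}_n$ lying in a single facet $\sigma$ it says $l_\tau^{\alpha(\tau)+1}\mid F_\sigma$ (condition (2)). This is exactly the kernel description in Lemma~\ref{lem:SplineMatrix}: the free part $\bigoplus_\tau S(-\alpha(\tau)-1)$ there exists precisely to realize $S/\langle l_\tau^{\alpha(\tau)+1}\rangle$ as a cokernel, so $\ker\overline{\delta}_{n+1}=\ker\phi=C^\alpha(\PC(\Sigma))=C^\alpha(\Sigma)$.

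The main technical point — and the step I expect to require the most care — is the bookkeeping of \emph{which} ideal $J(\gamma)$ attaches to each codimension one face and making sure the relative complex of $(\PC(\Sigma),\lk(\Sigma)\cup\PC(\Sigma^{-1}))$ correctly excludes exactly the $\tau$ with $\alpha(\tau)=-1$ and nothing else; one must check that faces of $\lk(\Sigma)$ (the ``outer'' faces not containing the origin) are all thrown away, that $\PC(\Sigma^{-1})$ contributes the $\alpha=-1$ boundary faces, and that the signs in the relative cellular boundary map agree with the signs implicit in $\delta_n$ of Lemma~\ref{lem:SplineMatrix}. Once the indexing is pinned down, the identification of kernels is a direct translation of divisibility conditions and requires no further computation; the hereditary/non-branching hypotheses on $\Sigma$ are what guarantee each interior $\tau$ has exactly two adjacent facets so that the boundary-map condition becomes the two-term divisibility condition of Definition~\ref{def:PolySpline}.
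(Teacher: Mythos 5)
Your proposal is correct and follows essentially the same route as the paper: identify $H_{n+1}(\cR/\cJ[\Sigma,\Sigma^{-1}])$ with $\ker\bar\delta_{n+1}$, then match that kernel condition with the divisibility conditions of Definition~\ref{def:PolySpline} (equivalently, with the kernel description from Lemma~\ref{lem:SplineMatrix}). The only quibble is a minor indexing slip — the top module should be written $S^{f_{n+1}(\Sigma)}$ (or $S^{f_{n+1}(\PC(\Sigma))}$) rather than $S^{f_n(\PC(\Sigma))}$, though it correctly corresponds to $S^{f_n(\PC)}$ from Lemma~\ref{lem:SplineMatrix} once one keeps track of the dimension shift between $\PC\subset\R^n$ and its cone in $\R^{n+1}$.
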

\begin{proof}
This is equivalent to the statement
\[
C^\alpha(\Sigma)=\mbox{ker}(S^{f_{n+1}(\Sigma)}\xrightarrow{\bar{\delta}_{n+1}} \bigoplus_{\tau\in\Sigma^{\ge 0}_n} S/J(\tau) ),
\]
where $\delta_{n+1}:S^{f_{n+1}(\Sigma)}\rightarrow \bigoplus_{\tau\in\Sigma^{\ge 0}_n} S$ is the top dimensional cellular boundary map of $\PC(\Sigma)$ relative to $\PC(\Sigma^{-1})\cup\lk(\Sigma)$.  This follows from Lemma~\ref{lem:SplineMatrix}; or it can be seen directly since it is another way to state the divisibility conditions from Definition~\ref{def:PolySpline}.  Explicitly, a tuple $(F_\sigma)_{\sigma\in\Sigma_{n+1}}$ is sent by $\bar{\delta}$ to the tuple $(F_{\sigma_1}- F_{\sigma_2})_\tau$ mod $J(\tau)$, where $\tau\in\Sigma_n$ is the codimension one face along which $\sigma_1,\sigma_2$ intersect.  This is $0$ iff $F_{\sigma_1}-F_{\sigma_2}\in J(\tau)$, i.e. iff $l^{\alpha(\tau)+1}_\tau | F_{\sigma_1}-F_{\sigma_2}$.  If $\tau\in\partial\PC$, then there is only one facet, say $\sigma$, containing $\tau$ and $F_\sigma$ $\equiv$  $0$ mod $J(\tau)$ iff $l^{\alpha(\tau)+1}_\tau|F_\sigma$.
\end{proof}

\begin{remark}
There is a tautological short exact sequence of complexes
\[
0\rightarrow \cJ[\Sigma,\Sigma'] \rightarrow \cR[\Sigma,\Sigma'] \rightarrow \cR/\cJ[\Sigma,\Sigma']\rightarrow 0
\]
We will frequently use this exact sequence of complexes in proofs.
\end{remark}

\begin{remark}
The most well studied case is when $\alpha(\tau)=r$ for every interior codimension one face $\tau\in\Sigma$ and $\alpha(\tau)=-1$ for every codimension one face in $\partial\Sigma$.  In this case $\Sigma^{-1}=\partial\Sigma$ and $C^\alpha(\Sigma)$ is denoted by $C^r(\Sigma)$.
\end{remark}

We spend the rest of the section investigating the homology of $\cR[\Sigma,\Sigma']$.  The complex $\cR[\Sigma,\Sigma']$ is defined so that
\[
H_i(\cR[\Sigma,\Sigma'])=H_i(\PC(\Sigma),\PC(\Sigma')\cup\lk(\Sigma);S)
\]
where the homology group on the right is the cellular homology of $\PC(\Sigma)$ relative to $\PC(\Sigma')\cup\lk(\Sigma)$ with coefficients in $S$.  This agrees with the so-called \textit{Borel-Moore} homology of the fan $\Sigma$ relative to the subfan $\Sigma'$.  The homology of this complex is described in more detail in the following proposition.

\begin{prop}\label{prop:LowHom}
Let $\Sigma$ be an $(n+1)$-dimensional abstract fan, with $n\ge 1$, $\Sigma'\neq\mathbf{0}\subset\Sigma$ a subfan, possibly empty, and $\cR[\Sigma,\Sigma']$ as defined above.
\[
H_i(\cR[\Sigma,\Sigma'])\cong\left\lbrace
\begin{array}{rl}
0 & \mbox{if } i=0,1\\
\widetilde{H}_{i-1}(\PC(\Sigma')\cup\lk(\Sigma);S)\cong H_{i-1}(\lk(\Sigma),\lk(\Sigma');S) & \mbox{if } i\ge 2.
\end{array}
\right.
\]
\end{prop}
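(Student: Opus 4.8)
The plan is to reduce the statement to a purely topological fact about the relative cellular chain complex, then bootstrap from the known homology of $\PC(\Sigma)$ and a long exact sequence of the pair. First I would observe that $\cR[\Sigma,\Sigma']$ is, by Definition~\ref{def:TopComplex}, the relative cellular chain complex $C_\bullet(\PC(\Sigma),\PC(\Sigma')\cup\lk(\Sigma))$ tensored over $\R$ with $S$; since $S$ is a free (in particular flat) $\R$-module, tensoring commutes with taking homology, so $H_i(\cR[\Sigma,\Sigma'])\cong H_i(\PC(\Sigma),\PC(\Sigma')\cup\lk(\Sigma);\R)\otimes_\R S$. Thus it suffices to compute the relative homology over $\R$ (or $\Z$) and then reattach the coefficients $S$ at the end; I will phrase everything below over a field to keep reduced homology clean.

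Next I would invoke the key geometric input: $\PC(\Sigma)$ is a cone (over $\lk(\Sigma)$) with apex the origin, hence contractible, so $\widetilde H_i(\PC(\Sigma))=0$ for all $i$. Writing $A=\PC(\Sigma')\cup\lk(\Sigma)\subset\PC(\Sigma)$, the long exact sequence of the pair $(\PC(\Sigma),A)$ then collapses to give $H_i(\PC(\Sigma),A)\cong\widetilde H_{i-1}(A)$ for all $i\ge 1$, and $H_0(\PC(\Sigma),A)=0$ because $A$ is nonempty (it contains $\lk(\Sigma)\neq\emptyset$, using $\Sigma'\ne\mathbf 0$ so that $\Sigma$ is genuinely $(n+1)$-dimensional with a nonempty link). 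This already yields the two vanishing cases $i=0,1$: for $i=1$ we need $\widetilde H_0(A)=0$, i.e. $A$ connected. Connectedness of $A=\PC(\Sigma')\cup\lk(\Sigma)$ follows because $\lk(\Sigma)$ is connected (the hereditary/pseudomanifold hypotheses make $\lk(\Sigma)$ strongly connected, as the link of a pure hereditary fan is itself a pure hereditary pseudomanifold, hence connected — and in any case it is the sphere-level slice of $\Sigma$), and every cone in $\Sigma'$ meets the origin, so $\PC(\Sigma')$ attaches to $\lk(\Sigma)$ through shared faces; I would spell this out carefully since it is the one place the "$\Sigma'\ne\mathbf 0$" hypothesis is used.

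For $i\ge 2$ I then need the identification $\widetilde H_{i-1}(\PC(\Sigma')\cup\lk(\Sigma))\cong H_{i-1}(\lk(\Sigma),\lk(\Sigma'))$. The clean way is to note that $\PC(\Sigma')$ is itself a cone (over $\lk(\Sigma')$) with the same apex, so the pair $(\PC(\Sigma')\cup\lk(\Sigma),\lk(\Sigma))$ deformation retracts — radially, collapsing the cone directions — onto the pair $(\lk(\Sigma),\lk(\Sigma'))$; more precisely $\PC(\Sigma')\cup\lk(\Sigma)$ is the mapping cone of the inclusion $\lk(\Sigma')\hookrightarrow\lk(\Sigma)$ glued along $\lk(\Sigma')$, equivalently $\lk(\Sigma)\cup_{\lk(\Sigma')}\mathrm{Cone}(\lk(\Sigma'))$, whose reduced homology is by the long exact sequence of that pushout exactly $H_{i-1}(\lk(\Sigma),\lk(\Sigma'))$. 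Finally I reattach the $S$-coefficients via the flatness remark from the first paragraph, converting $\widetilde H_{i-1}(-;\R)$ and $H_{i-1}(-;\R)$ into $\widetilde H_{i-1}(-;S)$ and $H_{i-1}(-;S)$, giving the stated formula.

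The main obstacle is the careful bookkeeping of which subcomplex is nonempty/connected and making the two radial deformation-retraction claims rigorous at the level of CW pairs rather than waving at "cone directions" — in particular ensuring the retraction of $\PC(\Sigma)\supset\PC(\Sigma')\cup\lk(\Sigma)$ is compatible with the cell structure so that it computes cellular homology, and handling the degenerate possibility that $\Sigma'$ is empty (then $A=\lk(\Sigma)$, $\lk(\Sigma')=\emptyset$, and the formula must read as ordinary, not reduced, homology of $\lk(\Sigma)$ — the $\widetilde H$ versus $H$ discrepancy in the statement is exactly the unreduced-homology-of-a-pair-with-empty-subspace convention, which I would flag explicitly).
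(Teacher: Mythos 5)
Your proposal is correct and follows essentially the same route as the paper: identify $H_i(\cR[\Sigma,\Sigma'])$ with relative homology of the pair $(\PC(\Sigma),\PC(\Sigma')\cup\lk(\Sigma))$, use contractibility of $\PC(\Sigma)$ and the long exact sequence of the pair for the low-degree vanishing and the degree shift, argue connectedness of $\PC(\Sigma')\cup\lk(\Sigma)$ via the hereditary hypothesis and $\Sigma'\neq\mathbf{0}$, and recognize $\PC(\Sigma')\cup\lk(\Sigma)$ as the mapping cone of $\lk(\Sigma')\hookrightarrow\lk(\Sigma)$ for the second isomorphism. The only cosmetic differences are that you pass through $\R$-coefficients and tensor with $S$ at the end (the paper works directly with $S$-coefficients, which amounts to the same flatness fact), and that your phrase "the pair $(\PC(\Sigma')\cup\lk(\Sigma),\lk(\Sigma))$ deformation retracts onto the pair $(\lk(\Sigma),\lk(\Sigma'))$" is not literally a deformation retraction of pairs — but the mapping-cone/excision argument you give right after is the correct and intended one; also, the reduced-versus-unreduced worry when $\Sigma'=\emptyset$ is moot since $i\ge 2$ means $i-1\ge 1$, where the two agree.
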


\begin{remark}
If $\Sigma=\wPC$, then $\lk(\Sigma)$ is homeomorphic to $\PC$ and $\lk(\Sigma^{-1})$ is homeomorphic to $\PC^{-1}$.
\end{remark}

\begin{proof}
We use the identification $H_i(\cR[\Sigma,\Sigma'])\cong H_i(\PC(\Sigma),\PC(\Sigma')\cup\lk(\Sigma);S)$. Consider the long exact sequence of the pair in singular homology corresponding to the inclusion $\PC(\Sigma')\cup\lk(\Sigma)\hookrightarrow \PC(\Sigma)$, with coefficients in $S$:
\[
\cdots\rightarrow H_i(\PC(\Sigma))\rightarrow H_i(\PC(\Sigma),\PC(\Sigma')\cup\lk(\Sigma)) \rightarrow H_{i-1}(\PC(\Sigma')\cup\lk(\Sigma))\rightarrow\cdots
\]
$\PC(\Sigma)$ is contractible, so
\[
H_i(\PC(\Sigma))=\left\lbrace
\begin{array}{rl}
S & \mbox{if } i=0\\
0 & \mbox{otherwise},
\end{array}
\right.
\]
The map $H_0(\PC(\Sigma')\cup\lk(\Sigma))\rightarrow H_0(\PC(\Sigma))=S$ is surjective, hence $H_0(\PC(\Sigma),\PC(\Sigma')\cup\lk(\Sigma))=0$ and we have a short exact sequence
\[
0 \rightarrow H_1(\PC(\Sigma),\PC(\Sigma')\cup\lk(\Sigma)) \rightarrow H_0(\PC(\Sigma')\cup\lk(\Sigma))\rightarrow S \rightarrow 0,
\]
Hence $H_1(\PC(\Sigma),\PC(\Sigma')\cup\lk(\Sigma))=0$ if $\PC(\Sigma')\cup\lk(\Sigma)$ is connected.  Since $\Sigma$ is hereditary, $\lk(\Sigma)$ is connected.  So if $\Sigma'=\emptyset$, $\PC(\Sigma')\cup\lk(\Sigma)=\lk(\Sigma)$ is connected.  If $\Sigma'\neq\emptyset$, then $\PC(\Sigma')$ is connected since $\mathbf{0}$ is contained in every face.  Furthermore $\PC(\Sigma')\cap\lk(\Sigma)\neq\emptyset$ since every face of $\Sigma$ other than $\mathbf{0}$ intersects nontrivially with $\lk(\Sigma)$, and we assumed $\Sigma'\neq\mathbf{0}$.  So $\PC(\Sigma')\cup\lk(\Sigma)$ is connected and the conclusion follows.

The isomorphisms
\[
H_i(\PC(\Sigma),\PC(\Sigma')\cup\lk(\Sigma))\cong H_{i-1}(\PC(\Sigma')\cup\lk(\Sigma))
\]
for $i\ge 2$ are immediate from the long exact sequence of the pair.  Finally, the isomorphism
\[
\widetilde{H}_j(\PC(\Sigma')\cup\lk(\Sigma))\cong H_j(\lk(\Sigma),\lk(\Sigma'))
\]
is a consequence of excision and the long exact sequence of the pair $(\PC(\Sigma')\cup\lk(\Sigma),\PC(\Sigma'))$.  The key observation is that $\PC(\Sigma')\cup\lk(\Sigma)$ is the \textit{mapping cone} of the inclusion $\lk(\Sigma')\hookrightarrow\lk(\Sigma)$.  That is, topologically, $\PC(\Sigma')\cup\lk(\Sigma)$ may be identified with the space
\[
\lk(\Sigma)\cup(\lk(\Sigma')\times I)/\sim,
\]
where $I=[0,1]$ is the unit interval, all points of the form $(x,0)$ are identified as a single point, and $(x,1)$ is identified with the image of $x$ in $\lk(\Sigma)$.  A more detailed discussion may be found in \cite[p. 125]{Hatcher}.
\end{proof}

\begin{exm}\label{ex:TopSC}
Let $\Sigma=\wQC$ as in Figure~\ref{fig:FSC}, with uniform smoothness parameters $\alpha(\tau)=r$ on interior codimension one faces and $\alpha(\tau)=-1$ on boundary codimension one faces.  Then $\Sigma^{-1}=\partial\Sigma$.  The complex $\cR[\Sigma,\Sigma^{-1}]$ is nonzero in homological degrees $1,2,$ and $3$.  It has the form
\[
S^5 \rightarrow S^8 \rightarrow S^4 \rightarrow 0,
\]
where $S=\R[x,y,z]$ is the polynomial ring in three variables.  By definition, $H_*(\cR[\Sigma,\partial\Sigma])$ computes the homology of the complex $\PC(\Sigma)$ relative to $\PC(\partial\Sigma)\cup\lk(\Sigma)=\partial\PC(\Sigma)$ with coefficients in $S=\R[x,y,z]$.  From Figure~\ref{fig:PFSC} it is clear that this is equivalent to computing $H_*(D^3,\mathbb{S}^2;S)$, the homology of a $3$-disk relative to its boundary with coefficients in $S$.  By excision, $H_*(D^3,\mathbb{S}^2;S) \cong \widetilde{H}_*(D^3/\mathbb{S}^2;S) =\widetilde{H}_*(\mathbb{S}^3;S)$.  Hence $H_i(\cR[\Sigma,\partial\Sigma])=0$ except when $i=3$, when $H_3(\cR[\Sigma,\partial\Sigma])=S$.

Equivalently, using Proposition~\ref{prop:LowHom}, we see that $H_0(\cR[\Sigma,\partial\Sigma])=0$ for $i=0,1$.  We have $\lk(\Sigma)$ is homeomorphic to $\QC$ and $\lk(\partial\Sigma)$ is homeomorphic to $\partial\QC$.  It is clear that the homology of $\QC$ relative to its boundary gives the homology of a $2$-sphere.  Shifting the homological dimensions up, we again arrive at the homology of the complex $\cR[\Sigma,\partial\Sigma]$.
\end{exm}

\begin{exm}\label{ex:TopSCB}
Again let $\Sigma=\wQC$ be as in Figure~\ref{fig:FSC}, but suppose we impose vanishing on the entire boundary, so that $\Sigma^{-1}=\emptyset$.  Then the complex $\cR[\Sigma,\Sigma^{-1}]=\cR[\Sigma]$ has the form
\[
S^5\rightarrow S^{12} \rightarrow S^8 \rightarrow S,
\]
where the final $S$ corresponds to the cone vertex.  $H_*(\cR[\Sigma,\Sigma^{-1}])\cong H_*(\Sigma,\PC(\Sigma^{-1})\cup\lk(\Sigma);S)=H_*(\Sigma,\lk(\Sigma);S)$.  Since $\lk(\Sigma)$ is contractible, this is the same as the reduced homology of a point.  We conclude that $H_i(\cR[\Sigma])$ vanishes for all $i$.

Equivalently, using Proposition~\ref{prop:LowHom}, we see that $H_0(\cR[\Sigma,\partial\Sigma])=0$ for $i=0,1$.  We have $\lk(\Sigma)$ is homeomorphic to $\QC$.  The homology of $\QC$ gives the homology of a $2$-disk.  Hence $H_i(\cR[\Sigma])=H_{i-1}(\QC;S)=0$ for $i=2,3$.  Note that $H_1(\cR[\Sigma])=S$ while $H_0(\QC;S)=S$.
\end{exm}

\section{Lattice Fans}

In~\cite{LS} certain complexes $\PC_W\subset\PC$, called \textit{lattice complexes}, are discussed in the context of describing localization of $C^r(\PC)$.  In this section we describe how this construction carries over to the context of a pair $(\Sigma,\Sigma')$.  In the end this will yield information about localizations of the entire complex $\cR/\cJ[\Sigma,\Sigma']$.

\begin{defn}\label{def:HypArr}
Let $\Sigma\subset\R^{n+1}$ be an $(n+1)$-dimensional fan and $\Sigma'\subset\Sigma$ a subfan.
\begin{enumerate}
\item $\A(\Sigma,\Sigma')$ denotes the hyperplane arrangement $\bigcup\limits_{\tau\in\Sigma_n\setminus\Sigma'_n} \mbox{aff}(\tau)$.
\item $L_{\Sigma,\Sigma'}$ denotes the intersection lattice $L(\A(\Sigma,\Sigma'))$ of $\A(\Sigma,\Sigma')$, ordered with respect to reverse inclusion.
\item The \textit{support} of a face $\gamma\in \Sigma$, denoted $\mbox{supp}(\gamma)$, is the collection of flats $W\in L_{\Sigma,\Sigma'}$ so that $W\subseteq\mbox{aff}(\gamma)$.
\end{enumerate}
\end{defn}

\begin{defn}\label{def:LatticePair}
Let $\Sigma$ be an $(n+1)$-dimensional fan, $\Sigma'\subset\Sigma$ a subfan,  $W\in L_{\Sigma,\Sigma'}$, and $\sigma\in\Sigma_{n+1}$.

Define $\Sigma_W^c$ to be the subfan of $\Sigma$ consisting of all faces whose affine span does not contain $W$ (equivalently whose support does not contain $W$).

Define $\Sigma_{W,\sigma}\subset\Sigma$ to be the subfan with faces $\gamma\subset\sigma'\in\Sigma_{n+1}$ so that there is a chain $\sigma=\sigma_0,\sigma_1,\ldots,\sigma_k=\sigma'$ with $\sigma_{i-1}\cap \sigma_i=\tau_i\in\Sigma_n$ and $W\subset\mbox{aff}(\tau_i)$ for $i=1,\ldots,k$.  We call $\Sigma_{W,\sigma}$ a \textit{lattice fan}.

Define an equivalence relation $\sim_W$ on $\Sigma_{n+1}$ by $\sigma\sim_W \sigma'$ if $\sigma'\in\Sigma_{W,\sigma}$.  
\end{defn}

\begin{defn}\label{def:LatticeNotation}
We will use the following notation
\begin{itemize}
\item $[\sigma]_W:$ equivalence class of $\sigma$ under $\sim_W$
\item $\Gamma_W:$ a set of distinct representatives $\sigma\in\Sigma_{n+1}$ of the equivalence classes $[\sigma]_W$
\item $\Sigma_W=\bigsqcup_{\sigma\in\Gamma_W} \Sigma_{W,\sigma}$
\item $\Sigma'_{W,\sigma}=(\Sigma^c_W\cup\Sigma')\cap\Sigma_{W,\sigma}$
\item $\Sigma^{-1}_{W,\sigma}=(\Sigma^c_W\cup\Sigma^{-1})\cap\Sigma_{W,\sigma}$
\item $(\Sigma^{\ge 0}_{W,\sigma})_i= i$-faces of $\Sigma_{W,\sigma}$ not contained in $\Sigma^{-1}_{W,\sigma}$
\item $(\Sigma_W,\Sigma'_W)=\sqcup_{\sigma\in\Gamma_W} (\Sigma_{W,\sigma},\Sigma'_{W,\sigma})$
\item $\cJ[\Sigma_W,\Sigma'_W]=\bigoplus\limits_{\sigma\in\Gamma_W} \cJ[\Sigma_{W,\sigma},\Sigma'_{W,\sigma}]$
\item $\cR[\Sigma_W,\Sigma'_W]=\bigoplus\limits_{\sigma\in\Gamma_W} \cR[\Sigma_{W,\sigma},\Sigma'_{W,\sigma}]$
\item $\cR/\cJ[\Sigma_W,\Sigma'_W]=\bigoplus\limits_{\sigma\in\Gamma_W} \cR/\cJ[\Sigma_{W,\sigma},\Sigma'_{W,\sigma}]$
\end{itemize}
\end{defn}

\begin{remark}
If $\sigma$ has no codimension one face whose affine span contains $W$, then $[\sigma]_W$ consists only of $\sigma$.
\end{remark}

\begin{remark}
$\Sigma_{W,\sigma}$ is the component of the lattice complex $\Sigma_W$ containing the face $\sigma$.
\end{remark}

\begin{remark}
The equivalence relation $\sim_W$ is similar to one used by Yuzvinsky in~\cite{Yuz} but is different in some subtle ways.  See Remark~\ref{rem:Yuz} following Example~\ref{ex:CubeOct}.
\end{remark}

\begin{remark}
If $W\subset\mbox{aff}(\gamma)$, where $\gamma\in\Sigma$ is a face of $\Sigma$, then $\mbox{st}(\gamma)\subset \Sigma_{W,\sigma}$ for any facet $\sigma$ with $\gamma\in\sigma$.
\end{remark}

\begin{exm}
Let $\QC$ be the polytopal complex from Example~\ref{ex:SC} and set $\Sigma=\wQC$.  Let $W=\mbox{aff}(v)$, where $v$ is an internal ray of $\Sigma$.  Let $\sigma_1$ be any facet containing $v$ and $\sigma_2$ any facet not containing $v$.  Then $W$, $\Sigma_{W,\sigma_1}$, and $\Sigma_{W,\sigma_2}$ are shown in Figure~\ref{fig:SchlegelCubeVertexLine}.  Notice that $\Sigma_W$ consists of two nontrivial components.  Also let $V$ be the affine span of the internal codimension one face of $\Sigma_{W,\sigma_2}$.  Then $W\subset V$ ($V<W$ in $L_{\Sigma,\Sigma^{-1}}$) and $\Sigma_{W,\sigma_2}=\Sigma_{V,\sigma_2}$.  Occasionally we will want to replace $W$ by the minimal flat $V$ satisfying $\Sigma_{V,\sigma_2}=\Sigma_{W,\sigma_2}$.

\begin{figure}
\begin{subfigure}[b]{.3\textwidth}
\includegraphics[width=\textwidth]{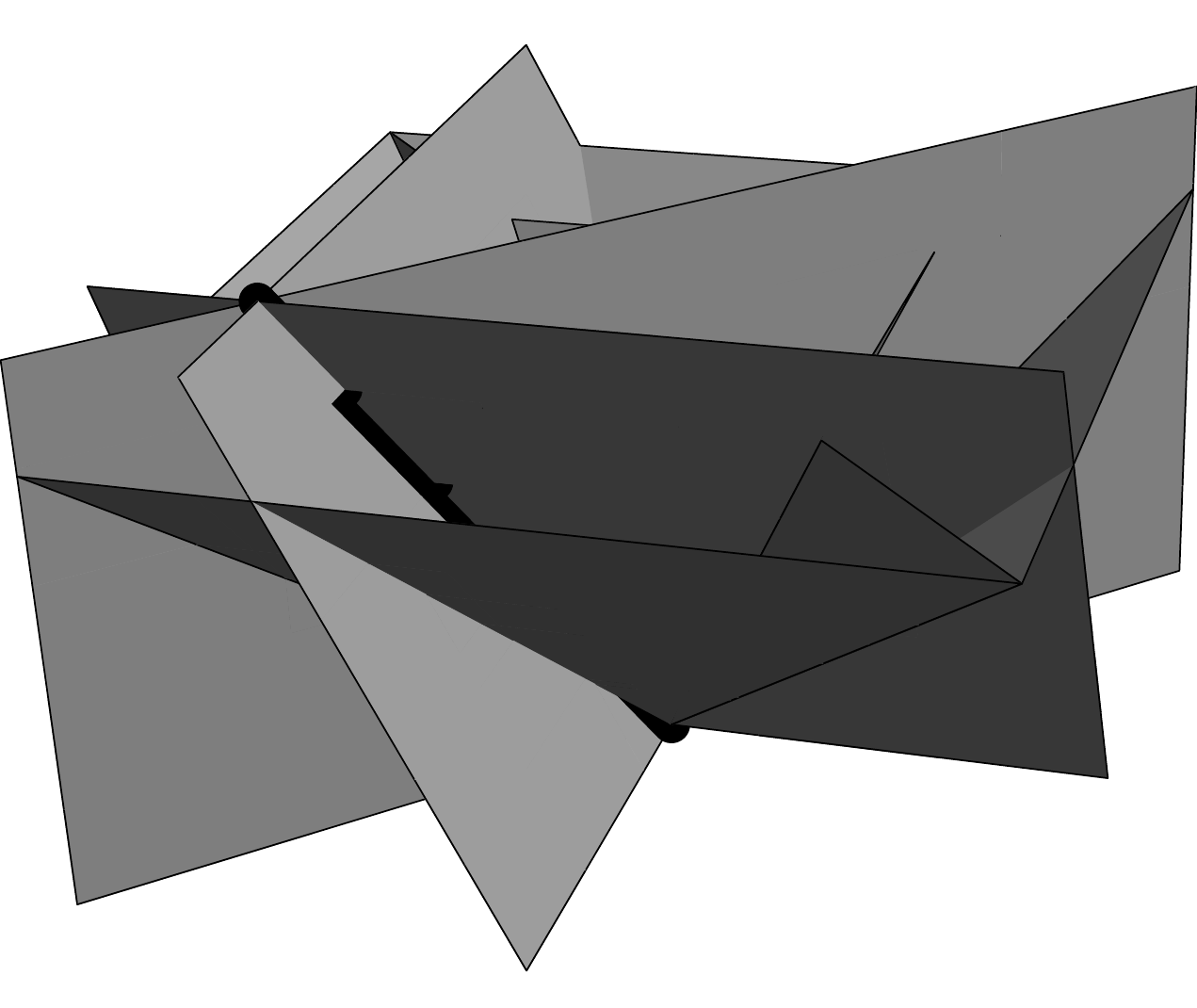}
\caption{$W=\mbox{aff}(\widehat{v})$}
\end{subfigure}
\begin{subfigure}[b]{.3\textwidth}
\includegraphics[width=\textwidth]{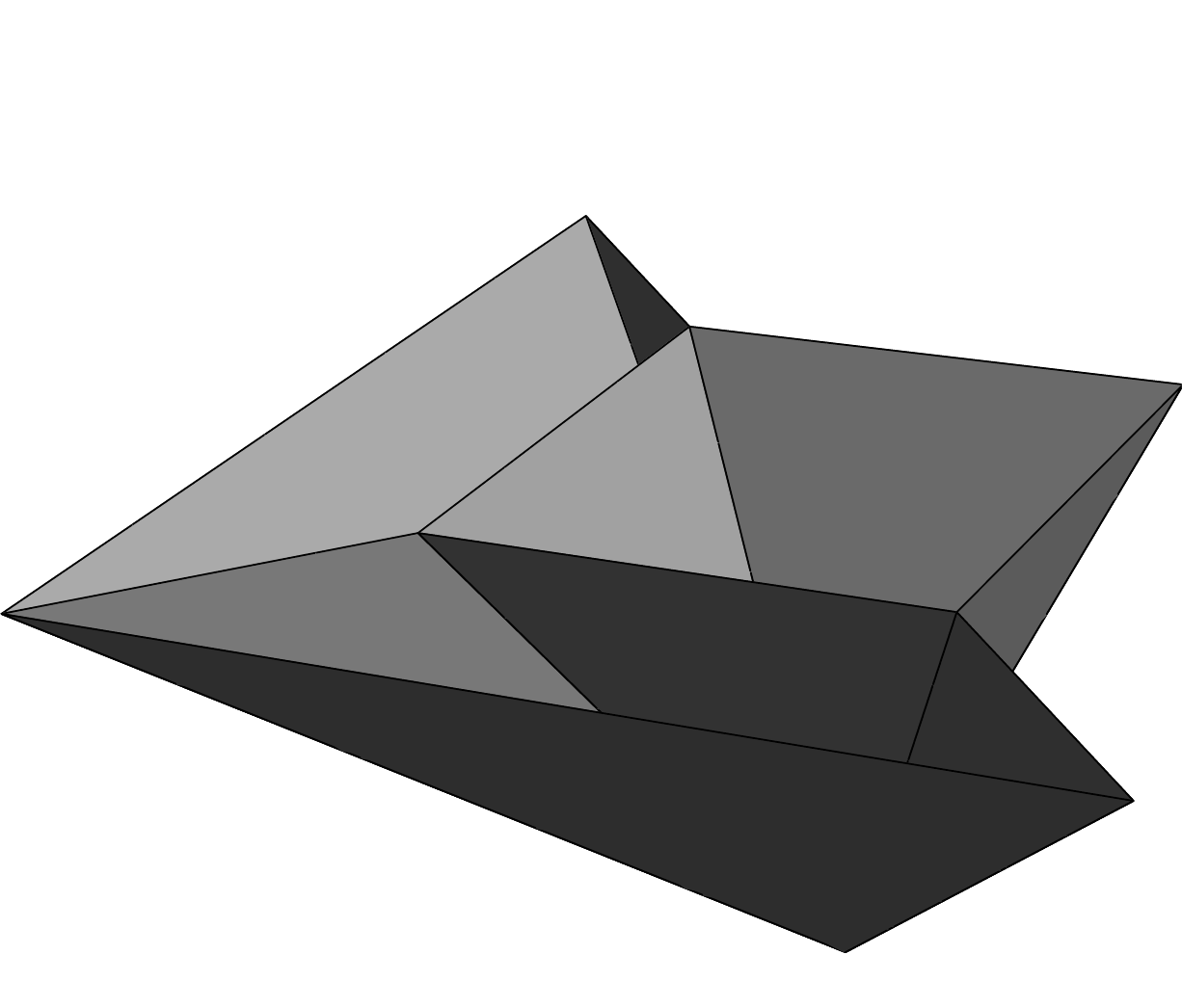}
\caption{$\Sigma_{W,\sigma_1}$}
\end{subfigure}
\begin{subfigure}[b]{.25\textwidth}
\includegraphics[width=\textwidth]{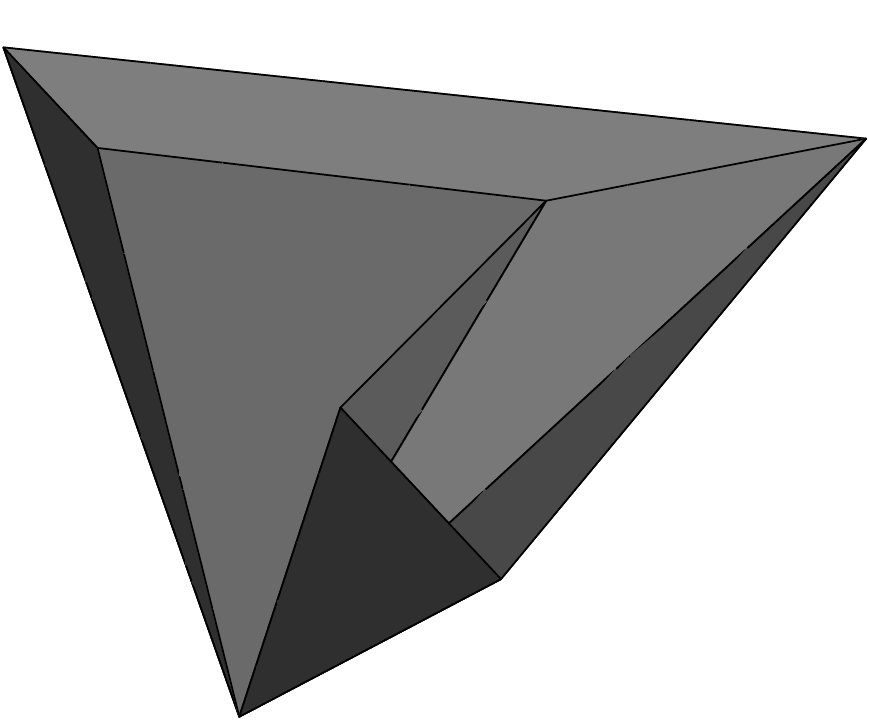}
\caption{$\Sigma_{W,\sigma_2}$}
\end{subfigure}
\caption{}\label{fig:SchlegelCubeVertexLine}
\end{figure}

\end{exm}

In the simplicial case $\Sigma_{W,\sigma}$ is always the star of a face.

\begin{lem}\label{lem:FanStar}
Let $\Sigma\subset\R^{n+1}$ be a simplicial fan, $\Sigma'\subset\Sigma$ a subfan.  Then $\Sigma_{W,\sigma}=\mbox{st}_\Sigma(\gamma)$ for some face $\gamma$ with $W\subset\mbox{aff}(\gamma)$.
\end{lem}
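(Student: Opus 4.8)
The plan is to produce the face $\gamma$ as the intersection of all codimension one faces $\tau$ that appear in chains defining $\Sigma_{W,\sigma}$, and then to verify that $\Sigma_{W,\sigma}$ coincides with $\mathrm{st}_\Sigma(\gamma)$ by a two-way containment argument. Concretely, let $T$ be the set of codimension one faces $\tau\in\Sigma_n$ with $W\subset\mathrm{aff}(\tau)$ that occur as some $\tau_i$ in a chain $\sigma=\sigma_0,\sigma_1,\dots,\sigma_k$ witnessing membership in $\Sigma_{W,\sigma}$, and set $\gamma=\bigcap_{\tau\in T}\tau$. In the simplicial setting this intersection is again a face of $\Sigma$ (the simplices containing a common vertex set), and every $\tau\in T$ contains $\gamma$, so $W\subset\mathrm{aff}(\tau)$ for all $\tau\supseteq\gamma$ in the relevant chains; I will need to check $W\subset\mathrm{aff}(\gamma)$, which should follow because each $\tau_i$ has codimension one and shares the facet $\sigma_{i-1}\cap\sigma_i$, forcing the $\tau_i$'s in a connected chain through simplices to all contain the codimension-(something) face cut out by $W$ inside $\sigma$.

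First I would show $\mathrm{st}_\Sigma(\gamma)\subseteq\Sigma_{W,\sigma}$. By the Remark preceding Lemma~\ref{lem:FanStar} (that $W\subset\mathrm{aff}(\gamma)$ implies $\mathrm{st}(\gamma)\subset\Sigma_{W,\sigma'}$ for any facet $\sigma'\ni\gamma$), it suffices to know $\gamma\in\Sigma_{W,\sigma}$, i.e.\ that $\sigma$ itself contains $\gamma$; this holds since $\gamma\subseteq\tau_1\subseteq\sigma_0=\sigma$ for the first step $\tau_1$ of any witnessing chain (and $T$ is nonempty unless $[\sigma]_W=\{\sigma\}$, the trivial case where one simply takes $\gamma=\sigma$). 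Then $\mathrm{st}(\gamma)\subseteq\Sigma_{W,\sigma}$ is immediate.

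For the reverse containment $\Sigma_{W,\sigma}\subseteq\mathrm{st}_\Sigma(\gamma)$, I would argue that every facet $\sigma'$ with $\sigma\sim_W\sigma'$ contains $\gamma$, proceeding by induction on the chain length $k$. The key step is: if $\sigma_{i-1}$ contains $\gamma$ and $\sigma_{i-1}\cap\sigma_i=\tau_i$ with $W\subset\mathrm{aff}(\tau_i)$, then $\sigma_i$ contains $\gamma$. Here is where simpliciality is essential and where the main obstacle lies: I must show $\gamma\subseteq\tau_i$, so that $\gamma\subseteq\sigma_i$ as well. This is the crux — it amounts to showing that a codimension one face $\tau_i$ of a simplex $\sigma_{i-1}\supseteq\gamma$ whose affine span contains $W$ must itself contain $\gamma$; equivalently, the unique vertex of $\sigma_{i-1}$ omitted by $\tau_i$ is not a vertex of $\gamma$. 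If it were, then $\mathrm{aff}(\tau_i)$ would be a hyperplane inside $\mathrm{aff}(\sigma_{i-1})$ missing a vertex of $\gamma$, but $W\subseteq\mathrm{aff}(\tau_i)$ together with $W\subseteq\mathrm{aff}(\gamma)$ (from the first paragraph) and a dimension/genericity-free linear-algebra count in the simplex should give a contradiction — this is the routine-but-delicate computation I would not grind through here. Granting that, induction yields $\gamma\subseteq\sigma'$ for every facet $\sigma'\in\Sigma_{W,\sigma}$, hence every face of $\Sigma_{W,\sigma}$ lies in some facet containing $\gamma$, i.e.\ in $\mathrm{st}_\Sigma(\gamma)$, completing the equality.
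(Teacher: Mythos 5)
The paper does not give a proof of this lemma---it simply cites \cite[Lemma~2.7]{LS}---so I evaluate your argument on its own merits. Your choice of $\gamma=\bigcap_{\tau\in T}\tau$ is a reasonable candidate and the two-way containment strategy is sound, but the argument you outline for the reverse containment $\Sigma_{W,\sigma}\subseteq\mbox{st}_\Sigma(\gamma)$ hinges on a geometric claim that is false. You propose to show, by induction on chain length, that if $\gamma\subseteq\sigma_{i-1}$, $\tau_i$ is a codimension-one face of $\sigma_{i-1}$ with $W\subset\mbox{aff}(\tau_i)$, and $W\subset\mbox{aff}(\gamma)$, then $\gamma\subseteq\tau_i$. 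This does not hold in general: take $\sigma_{i-1}=\mbox{cone}(v_0,v_1,v_2)$, $\gamma=\mbox{cone}(v_0,v_1)$, $\tau_i=\mbox{cone}(v_0,v_2)$, and $W=\mbox{span}(v_0)$; every hypothesis is satisfied, yet $v_1\notin\tau_i$. The ``routine-but-delicate linear-algebra count'' you defer cannot be carried out, because the statement it would establish is wrong.

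The irony is that you do not need it. If $\tau_i$ occurs in a witnessing chain, then $\tau_i\in T$ by your very definition of $T$, so $\gamma=\bigcap_{\tau\in T}\tau\subseteq\tau_i\subseteq\sigma_i$ is immediate---no induction, no geometry. The step that genuinely requires simpliciality, and which you wave at without carrying out, is $W\subset\mbox{aff}(\gamma)$. A clean way to get it: for a facet $\sigma'$, the collection of faces $\eta\subseteq\sigma'$ with $W\subset\mbox{aff}(\eta)$ is closed under intersection because the rays of a simplicial cone are linearly independent, so there is a unique minimal such face $\gamma_{\sigma'}$ (equal to the intersection of all codimension-one faces of $\sigma'$ whose affine span contains $W$). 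If $\sigma_{i-1},\sigma_i$ share $\tau_i$ with $W\subset\mbox{aff}(\tau_i)$, then $\gamma_{\sigma_{i-1}}$ and $\gamma_{\sigma_i}$ are both equal to the minimal face of the simplex $\tau_i$ whose span contains $W$, hence equal to each other. Running along any chain shows $\gamma_\sigma=\gamma_{\sigma_j}\subseteq\tau_j$ for every $\tau_j$ in the chain, so $\gamma_\sigma\subseteq\tau$ for all $\tau\in T$, whence $\gamma_\sigma\subseteq\gamma$ and $W\subset\mbox{aff}(\gamma_\sigma)\subseteq\mbox{aff}(\gamma)$. With that in place, the forward containment via the preceding Remark and the definitional inclusion $\gamma\subseteq\tau_i$ for all $\tau_i\in T$ finish the proof.
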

\begin{proof}
This is the content of~\cite[Lemma~2.7]{LS}.
\end{proof}

With these notations in place the following lemma is almost immediate.

\begin{lem}\label{lem:Localize}
Let $\Sigma\subset\R^{n+1}$ be an $(n+1)$-dimensional fan, $\Sigma'\subset\Sigma$ a subfan, and $P\in \mbox{spec}(S)$.  Set $W=\max_{V\in L_{\Sigma,\Sigma'}}\{I(V)|I(V)\subset P\}$.  Then
\[
\begin{array}{rl}
\cR/\cJ[\Sigma,\Sigma']_P= & \cR/\cJ[\Sigma,\Sigma^c_W\cup \Sigma']_P\\
=& \cR/\cJ[\Sigma_W,\Sigma'_W]_P,
\end{array}
\]
\end{lem}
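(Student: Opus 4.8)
The plan is to establish the two claimed equalities separately, in each case working localization by localization at the prime $P$. The first equality $\cR/\cJ[\Sigma,\Sigma']_P = \cR/\cJ[\Sigma,\Sigma^c_W\cup\Sigma']_P$ asks us to see that adjoining $\Sigma^c_W$ to $\Sigma'$ does nothing after localizing at $P$. Recall that $\cR/\cJ[\Sigma,\Sigma^c_W\cup\Sigma']_i$ is a direct sum of copies of $S/J(\gamma)$ over $\gamma\in\Sigma_i\setminus(\Sigma^c_W\cup\Sigma')_i$, so the only possible discrepancy between the two complexes is contributed by faces $\gamma\in\Sigma^c_W\setminus\Sigma'$. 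By definition of $\Sigma^c_W$, such a face has $W\not\subset\mbox{aff}(\gamma)$, so no codimension one face $\tau\ni\gamma$ with $\gamma\in\Sigma^{\ge 0}_n$ can have $W\subset\mbox{aff}(\tau)$ either (since $\mbox{aff}(\tau)\subseteq\mbox{aff}(\gamma)$ when $\gamma\subset\tau$... more precisely $\mbox{aff}(\gamma)\subseteq\mbox{aff}(\tau)$, so $W\subset\mbox{aff}(\gamma)$ would follow from $W\subset\mbox{aff}(\tau)$). Hence every $l_\tau$ appearing in $J(\gamma)$ cuts out a flat not below $W$ in $L_{\Sigma,\Sigma'}$; by the maximality defining $W$, such $I(\tau)\not\subset P$, so some generator $l_\tau$ of $J(\gamma)$ is a unit in $S_P$, giving $(S/J(\gamma))_P = 0$. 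Thus all the extra summands vanish after localizing, and the same argument shows the differentials match, so the two localized complexes coincide.

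For the second equality, I would unwind the definition $\cR/\cJ[\Sigma_W,\Sigma'_W] = \bigoplus_{\sigma\in\Gamma_W}\cR/\cJ[\Sigma_{W,\sigma},\Sigma'_{W,\sigma}]$, where $\Sigma'_{W,\sigma} = (\Sigma^c_W\cup\Sigma')\cap\Sigma_{W,\sigma}$. The $i$-th module of the right-hand side is a direct sum of $S/J'(\gamma)$ over all $\gamma$ lying in some $\Sigma_{W,\sigma}$ but not in $\Sigma^c_W\cup\Sigma'$, where $J'(\gamma)$ is computed inside the relevant lattice fan. The key points are: (i) a face $\gamma$ with $W\subset\mbox{aff}(\gamma)$ that is not in $\Sigma^c_W\cup\Sigma'$ lies in $\Sigma_{W,\sigma}$ for exactly one equivalence class representative $\sigma\in\Gamma_W$, because the lattice fans partition the facets and, by the remark that $\mbox{st}(\gamma)\subset\Sigma_{W,\sigma}$ whenever $W\subset\mbox{aff}(\gamma)$ and $\gamma\in\sigma$, the face $\gamma$ appears in the lattice fan of each adjacent facet, all of which are $\sim_W$-equivalent; and (ii) the ideal $J(\gamma)$ computed in $\Sigma$ agrees, after localizing at $P$, with $J'(\gamma)$ computed in $\Sigma_{W,\sigma}$, since the codimension one faces $\tau\ni\gamma$ that survive in $\Sigma_{W,\sigma}$ are precisely those with $W\subset\mbox{aff}(\tau)$, and by the first paragraph the contributions of the remaining $\tau$ (with $W\not\subset\mbox{aff}(\tau)$) become trivial in $S_P$ anyway. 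Combining (i) and (ii), the localized complex $\cR/\cJ[\Sigma,\Sigma^c_W\cup\Sigma']_P$ is term-by-term identical to $\cR/\cJ[\Sigma_W,\Sigma'_W]_P$, and one checks the differentials agree because the cellular differential is local in the same sense.

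I expect the main obstacle to be bookkeeping around exactly which faces survive and showing the differentials match, rather than any conceptual difficulty — in particular, verifying carefully that a surviving face $\gamma$ (one with $W\subset\mbox{aff}(\gamma)$, $\gamma\notin\Sigma^c_W\cup\Sigma'$) contributes to exactly one summand $\cR/\cJ[\Sigma_{W,\sigma},\Sigma'_{W,\sigma}]$ and that its boundary faces match up correctly on both sides. The statement that $\Sigma_W = \bigsqcup_{\sigma\in\Gamma_W}\Sigma_{W,\sigma}$ is a disjoint union on facets but not literally disjoint on lower faces, so I would be careful to phrase (i) in terms of which facets contain $\gamma$. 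Once the term-by-term identification is in hand, the agreement of differentials is forced because both are induced by restricting and quotienting the cellular differential of $\cR[\Sigma,\Sigma^c_W\cup\Sigma']$, and localization is exact.
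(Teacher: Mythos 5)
Your overall strategy mirrors the paper's: localize summand by summand, observe that only the faces with $W\subset\mbox{aff}(\gamma)$ can survive, and then reassemble the survivors as a direct sum over the equivalence classes $[\sigma]_W$. But there is a genuine logical error in your first paragraph. You want $(S/J(\gamma))_P=0$ for $\gamma\in\Sigma^c_W\setminus\Sigma'$, and you assert that since $W\not\subset\mbox{aff}(\gamma)$, \emph{no} codimension one face $\tau\supset\gamma$ can have $W\subset\mbox{aff}(\tau)$, justifying this from $\mbox{aff}(\gamma)\subseteq\mbox{aff}(\tau)$ by writing ``so $W\subset\mbox{aff}(\gamma)$ would follow from $W\subset\mbox{aff}(\tau)$.'' That is the wrong direction: $\mbox{aff}(\gamma)\subseteq\mbox{aff}(\tau)$ gives $W\subset\mbox{aff}(\gamma)\Rightarrow W\subset\mbox{aff}(\tau)$, not the converse, and the asserted claim is in fact false in general (take $W=\mbox{aff}(\tau_0)$ for a single codimension one face $\tau_0\supset\gamma$; then $W\subset\mbox{aff}(\tau_0)$ but $W\not\subset\mbox{aff}(\gamma)$ whenever $\gamma$ has higher codimension). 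What you actually need is weaker and true, but requires a different fact than the containment you cite: if \emph{every} $\tau\in\Sigma_n^{\ge 0}$ containing $\gamma$ had $W\subset\mbox{aff}(\tau)$, then $W\subset\bigcap_\tau\mbox{aff}(\tau)=\mbox{aff}(\gamma)$, contradicting $\gamma\in\Sigma^c_W$; here one uses that the affine span of a face of a cone is the intersection of the affine spans of the codimension one faces of that cone containing it. Hence \emph{some} $\tau\supset\gamma$ has $W\not\subset\mbox{aff}(\tau)$, and by the maximality defining $W$ the corresponding $l_\tau$ is a unit in $S_P$, so $(S/J(\gamma))_P=0$. This is essentially what the paper asserts (tersely) when it says $J(\gamma)\subset P$ iff $J(\gamma)\subset I(W)$ iff $W\subset\mbox{aff}(\gamma)$.

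A smaller remark on your point (ii): once you know $\gamma\notin\Sigma^c_W$, i.e.\ $W\subset\mbox{aff}(\gamma)$, the (correct) containment $\mbox{aff}(\gamma)\subseteq\mbox{aff}(\tau)$ already gives $W\subset\mbox{aff}(\tau)$ for \emph{every} $\tau\supset\gamma$, so there are no ``extra'' generators to worry about and the two complexes $\cR/\cJ[\Sigma,\Sigma^c_W\cup\Sigma']$ and $\bigoplus_\sigma\cR/\cJ[\Sigma_{W,\sigma},\Sigma'_{W,\sigma}]$ already agree term by term \emph{before} localizing; the second equality is a plain rewriting, as the paper states. Your point (i) about each surviving $\gamma$ lying in a unique $\Sigma_{W,\sigma}$, using the remark $\mbox{st}(\gamma)\subset\Sigma_{W,\sigma}$ when $W\subset\mbox{aff}(\gamma)$, is the right observation and matches the paper's decomposition $\Sigma\setminus(\Sigma^c_W\cup\Sigma')=\sqcup_{\sigma\in\Gamma_W}\Sigma_{W,\sigma}\setminus\Sigma'_{W,\sigma}$.
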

\begin{proof}
Each module in the chain complex $\cR/\cJ[\Sigma,\Sigma']$ is a direct sum of modules of the form $S/J(\tau)$ for $\tau\in\Sigma\setminus\Sigma'$.  Under localization, all of these go to zero unless $J(\tau)\subset P$, in other words, $J(\tau)\subset I(W)$, hence $W\subset\mbox{aff}(\tau)$ and $\tau\not\in\Sigma'$.  This proves the first equality.  The second equality simply rewrites the complex $\cR/\cJ[\Sigma,\Sigma^c_W\cup\Sigma']$ as a direct sum across connected components of $\Sigma_W$, using the observation that $\Sigma\setminus(\Sigma^c_W\cup\Sigma')=\sqcup_{\sigma\in\Gamma_W} \Sigma_{W,\sigma}\setminus \Sigma'_{W,\sigma}$.
\end{proof}

We do an extended computation to show how the complexes $\Sigma_{W,\sigma}$ and their topology can be used to compute certain localizations of the complex $\cR/\cJ[\Sigma,\Sigma^{-1}]$.  This is a rather long process to compute a localization which is fairly quick to do by hand, however it illustrates the general procedure.

\begin{exm}\label{ex:SCCentralLoc}
Let $\QC$ be the complex from~\ref{ex:SC} and $\Sigma=\wQC$.  We show in Figure~\ref{fig:SCCentralLine} the affine spans of $4$ interior codimension one faces which intersect along the $z$-axis, which we denote by $W$.  In Figure~\ref{fig:SCCentralLinePF} we show the polytopal complex $\PC(\Sigma_{W,\sigma})$, where $\sigma$ is any of the four facets with a codimension one face $\gamma$ with $W\in\mbox{supp}(\gamma)$.  Note that the central facet is removed.  In this case $\Sigma_W=\Sigma_{W,\sigma}$.
\begin{figure}[htp]
\centering
\begin{subfigure}[b]{.45\textwidth}
\centering
\includegraphics[width=.8\textwidth]{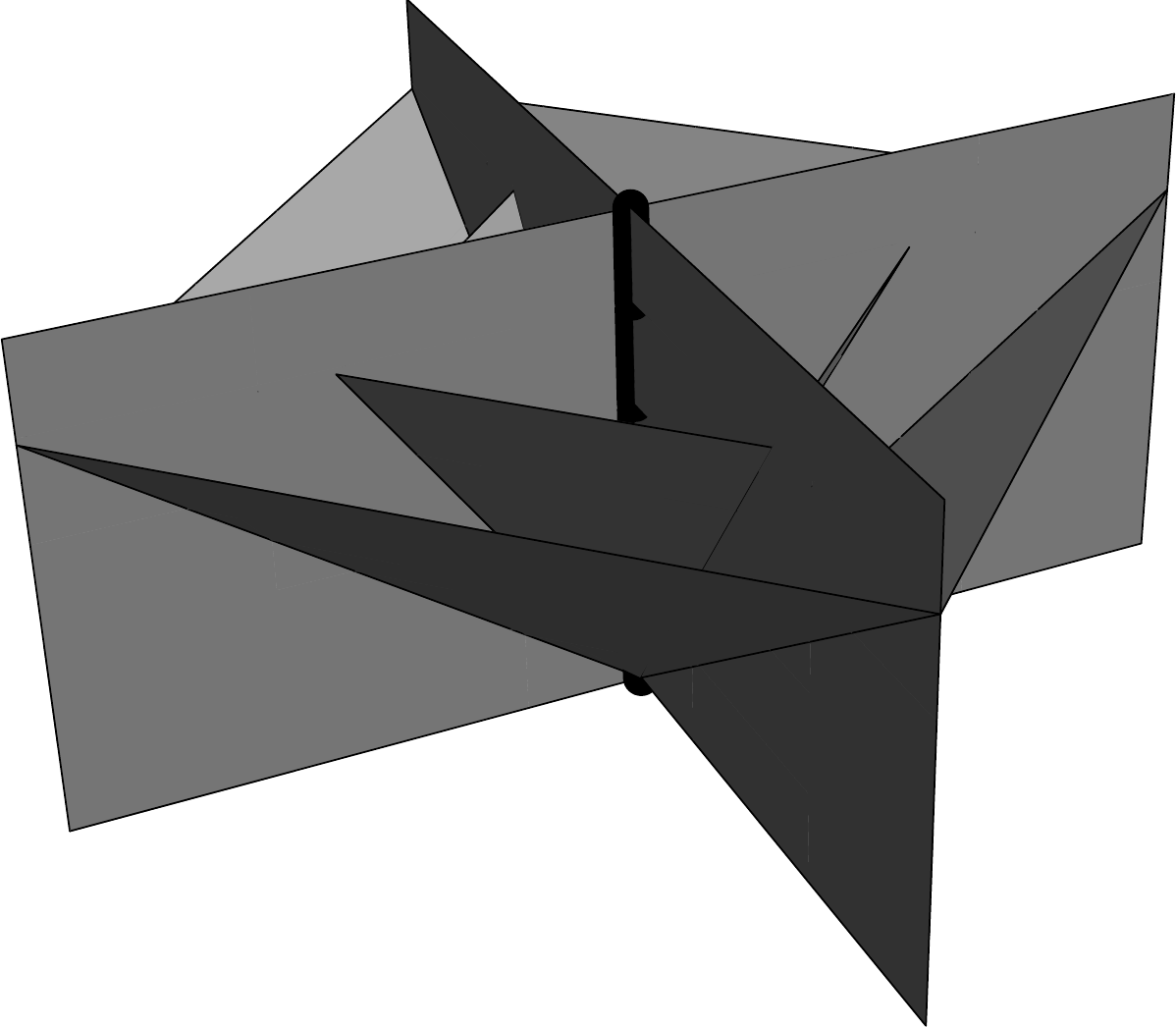}
\caption{$W$}\label{fig:SCCentralLine}
\end{subfigure}
\begin{subfigure}[b]{.45\textwidth}
\centering
\includegraphics[width=.8\textwidth]{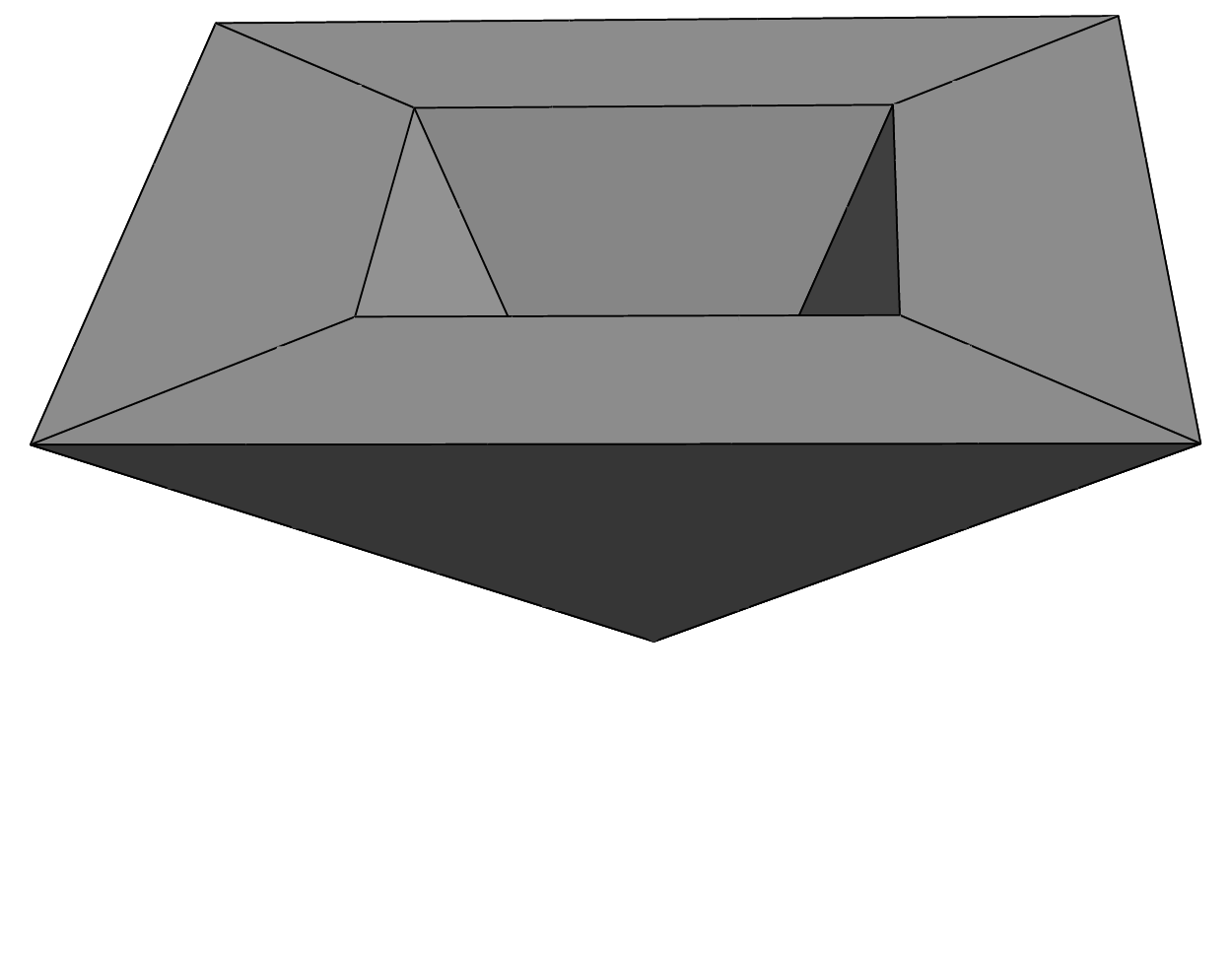}
\caption{$\PC(\Sigma_{W,\sigma})$}\label{fig:SCCentralLinePF}
\end{subfigure}
\caption{}
\end{figure}

Since the only codimension one facets whose affine spans contain $W$ are interior, $\Sigma^{-1}_{W,\sigma}=\partial\Sigma_{W,\sigma}$ regardless of what smoothness parameters we assign.

The complex $\cR[\Sigma_{W,\sigma},\Sigma^{-1}_{W,\sigma}]=\cR[\Sigma_{W,\sigma},\partial\Sigma_{W,\sigma}]$ is concentrated in homological degrees $3$ and $2$ and has the form
\[
S^4\rightarrow S^4 \rightarrow 0 \rightarrow 0.
\]
$H_*(\cR[\Sigma_{W,\sigma},\partial\Sigma_{W,\sigma}])$ computes the homology of $\PC(\Sigma_{W,\sigma})$ relative to $\partial\PC(\Sigma_{W,\sigma})$, so
\[
H_i(\cR[\Sigma_{W,\sigma},\partial\Sigma_{W,\sigma}])=H_{i-1}(\lk(\Sigma_{W,\sigma}),\partial\lk(\Sigma_{W,\sigma}))
\]
for $i\ge 2$ by Proposition~\ref{prop:LowHom}.

From Figure~\ref{fig:SCCentralPoint}, which displays $\Sigma_{W,\sigma})$ and its boundary (up to homeomorphism)  we see that the homology on the left-hand side is the same as the homology of a $2$-sphere with $2$ points identified.
\begin{figure}
\includegraphics[width=.3\textwidth]{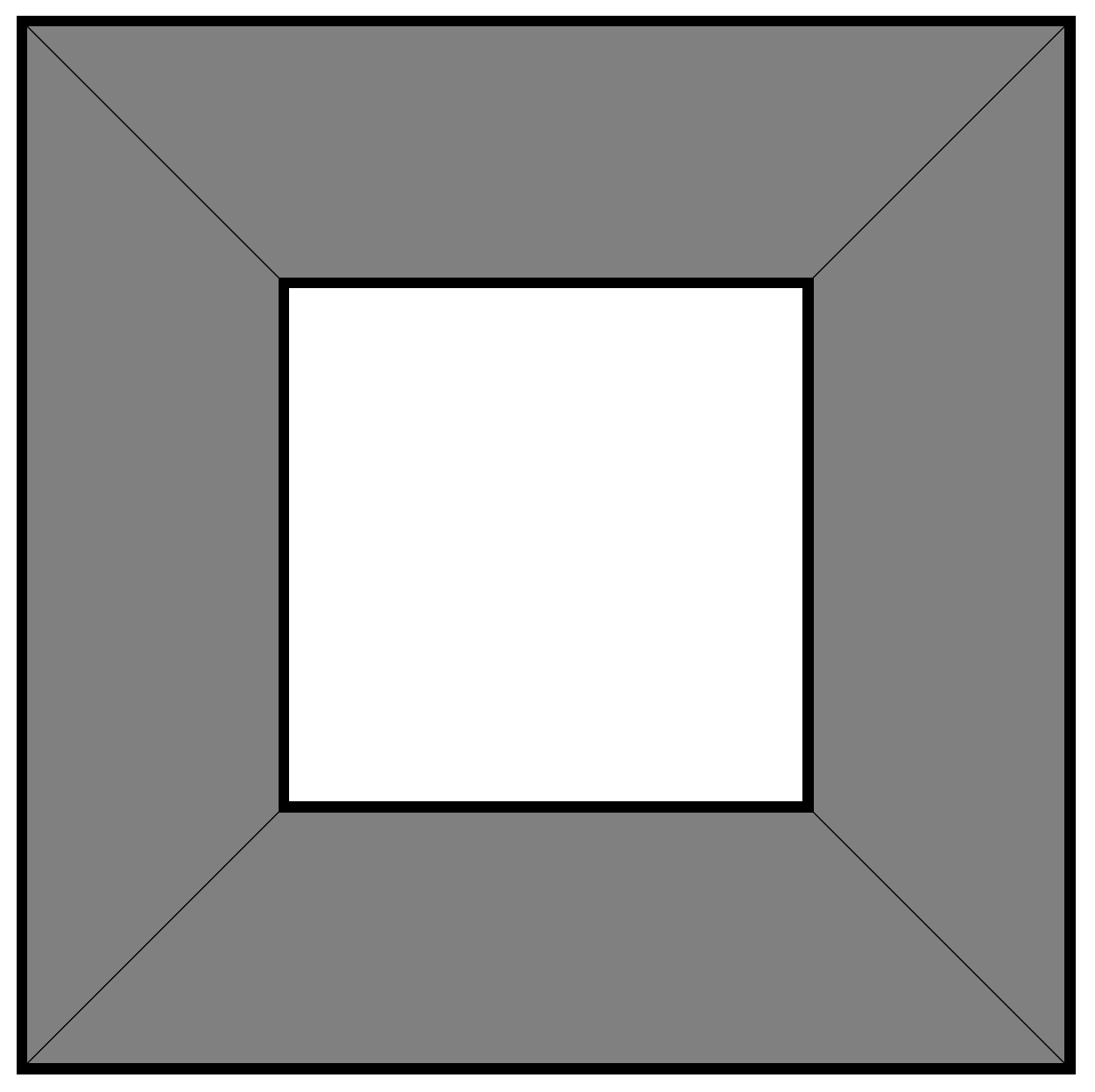}
\caption{$lk(\Sigma_{W,\sigma})$}\label{fig:SCCentralPoint}
\end{figure}
Thus $H_3(\cR[\Sigma_{W,\sigma},\partial\Sigma_{W,\sigma})=H_2(\cR[\Sigma_{W,\sigma},\partial\Sigma_{W,\sigma})=S$ while the lower two homologies vanish.

Now, via the tail end of the long exact sequence
\[
0\rightarrow \cJ[\Sigma_{W,\sigma},\partial\Sigma_{W,\sigma}]\rightarrow \cR[\Sigma_{W,\sigma},\partial\Sigma_{W,\sigma}]\rightarrow \cR/\cJ[\Sigma_{W,\sigma},\partial\Sigma_{W,\sigma}] \rightarrow 0,
\]
we obtain that
\[
H_2(\cR/\cJ[\Sigma_W,\sigma])=S/(\sum_{i=1}^4 J(\tau_i)),
\]
where $\tau_1,\ldots,\tau_4$ are the four interior codimension one facets of $\Sigma_{W,\sigma}$.  By Lemma ~\ref{lem:Localize}, we have shown that
\[
H_2(\cR/\cJ[\Sigma,\Sigma^{-1}])_{I(W)}=(S/(\sum_{i=1}^4 J(\tau_i)))_{I(W)}.
\]
In fact we could replace $I(W)$ by any prime $P$ containing $I(W)$, as long is there is no other flat $V\in L_{\Sigma,\Sigma^{-1}}$, with $I(W)\subsetneq I(V)$, so that $I(V)\subset P$.
\end{exm}

From Lemma~\ref{lem:Localize} and Example~\ref{ex:SCCentralLoc} we see that it is useful to understand the homology of the complexes $\cR[\Sigma_{W,\sigma},\Sigma'_{W,\sigma}]$.  To this end we introduce a variant of a graph used by Schenck~\cite[Definition~2.5]{Chow}, which also builds on dual graphs of Rose~\cite{r1,r2}.  This graph simplifies the computation of the homology of $\Sigma_{W,\sigma}$ in homological degree $\dim(W)+1$.  In order to construct this graph we need the following easy lemma.

\begin{lem}\label{lem:TwoFace}
Suppose $\psi\subset\R^{n+1}$ is a convex polyhedral cone of dimension $d+2$ and $W\subset\mbox{aff}(\psi)$, where $W$ is a linear subspace of dimension $d$.  Then $\psi$ has at most $2$ faces $\gamma_1,\gamma_2\in\Sigma_{d+1}$ so that $W\subset\mbox{aff}(\gamma_1)$ and $W\subset\mbox{aff}(\gamma_2)$.
\end{lem}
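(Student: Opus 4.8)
The plan is to reduce to a two-dimensional picture by quotienting out $W$. Write $V=\mathrm{aff}(\psi)=\mathrm{span}(\psi)$, a linear subspace of dimension $d+2$ containing $W$, and let $\pi\colon V\to V/W$ be the quotient map. Since $\dim(V/W)=2$ and $\psi$ is a finitely generated cone, $\pi(\psi)$ is a convex polyhedral cone spanning the plane $V/W$, and such a cone (being either the whole plane, a half-plane, or a wedge between two rays) has at most two one-dimensional faces. I would then show that $\gamma\mapsto\pi(\gamma)$ is an injection from the set of codimension-one faces $\gamma$ of $\psi$ with $W\subseteq\mathrm{aff}(\gamma)$ into the set of one-dimensional faces of $\pi(\psi)$; the desired bound of $2$ follows immediately.

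The argument rests on three observations. First, if $\gamma=\psi\cap H$ is a codimension-one face of $\psi$ cut out by a supporting hyperplane $H$ through the origin (as in Definition~\ref{def:Cone}), then $\mathrm{span}(\gamma)\subseteq H$ and both are $(d+1)$-dimensional, so in fact $H=\mathrm{span}(\gamma)$; hence the hypothesis $W\subseteq\mathrm{aff}(\gamma)$ is precisely the statement $W\subseteq H$. Second, writing $H=\ker\ell$ for a linear functional $\ell$ on $V$ with $\psi\subseteq\{\ell\ge 0\}$, the inclusion $W\subseteq\ker\ell$ lets $\ell$ descend to a nonzero functional $\bar\ell$ on $V/W$ with $\pi(\psi)\subseteq\{\bar\ell\ge 0\}$, so $\pi(H)=\{\bar\ell=0\}$ is a supporting line of $\pi(\psi)$. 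A one-line check (if $\pi(v)\in\pi(\psi)\cap\pi(H)$ with $v\in\psi$, then $\ell(v)=0$, so $v\in\psi\cap H=\gamma$) gives $\pi(\gamma)=\pi(\psi)\cap\pi(H)$, a genuine face of $\pi(\psi)$; it is one-dimensional because $W\subseteq\mathrm{span}(\gamma)$ of codimension one forces $\dim\pi(\gamma)\le 1$, while $\dim\mathrm{span}(\gamma)=d+1>d=\dim W$ forces $\gamma\not\subseteq W$, i.e.\ $\pi(\gamma)\neq\{0\}$. Third, for injectivity one recovers $H$ from $\pi(\gamma)$ as $\pi^{-1}(\mathrm{span}(\pi(\gamma)))$ — both are $(d+1)$-dimensional subspaces containing $W$ with the same image under $\pi$ — so $\gamma=\psi\cap H$ is determined by $\pi(\gamma)$.

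I do not expect a genuine obstacle here; the only point that requires care is the verification that $\pi(\gamma)$ is actually a face of $\pi(\psi)$, since images of faces under a projection need not be faces in general. This is exactly where the hypothesis $W\subseteq\mathrm{aff}(\gamma)$ (equivalently $W\subseteq H$) is essential: it is what allows the supporting functional $\ell$ of $\gamma$ to descend to the quotient and cut out $\pi(\gamma)$ as a face there. Everything else is a routine dimension count together with the elementary classification of two-dimensional convex polyhedral cones.
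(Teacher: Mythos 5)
Your proof is correct and rests on the same geometric idea the paper invokes: after passing modulo $W$, the picture is two-dimensional and a convex polyhedral cone in the plane can have at most two one-dimensional faces. You make this precise by working in the quotient $V/W$, checking carefully that the hypothesis $W\subseteq\mathrm{aff}(\gamma)$ lets the supporting functional descend so that $\pi(\gamma)$ really is a face of $\pi(\psi)$, and then establishing injectivity by a dimension count; the paper instead gives a brief informal contradiction argument (a third supporting hyperplane through $W$ would have to lie ``between'' two others and hence meet the interior of $\psi$), so your write-up supplies the details the paper leaves implicit.
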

\begin{proof}
This follows from the fact that the intersection of the affine hulls of three distinct codimension one faces of a convex cone $\psi$ cannot intersect in a codimension $2$ linear space.  This would require the supporting hyperplane of one of the faces to be `between' the other two, hence this hyperplane would meet the interior of $\psi$, which is a contradiction.
\end{proof}

\begin{defn}\label{def:WGraph}
Suppose $\Sigma\subset\R^{n+1}$ is a pure,hereditary, $(n+1)$-dimensional fan, $\Sigma'\subset\partial\Sigma$ is a subfan, and $W\subset\R^{n+1}$ is a $d$-dimensional subspace so that $W\subset\bigcap\limits_{\tau\in\Sigma_n\setminus\Sigma'_n} \mbox{aff}(\tau)$.

$G_W(\Sigma,\Sigma')$ is the graph with one vertex for every face in $\Sigma_{d+1}\setminus\Sigma'_{d+1}$.  Also $G_W(\Sigma,\Sigma')$ has one distinguished vertex $v_b$ iff there is at least one face $\psi\in\Sigma_{d+2}\setminus\Sigma'_{d+2}$ having only one face $\gamma$ so that $\gamma\in\Sigma_{d+1}\setminus\Sigma'_{d+1}$.  Two vertices $v,w$ corresponding to $\gamma_v,\gamma_w\in\Sigma_{d+1}\setminus\Sigma'_{d+1}$ are connected in $G_W(\Sigma)$ iff there is a $\psi\in\Sigma_{d+2}\setminus\Sigma'_{d+2}$ so that $\gamma_v,\gamma_w$ are the faces of $\psi$ whose affine spans contain $W$.  Connect the vertex $v$ to the vertex $v_b$ if the corresponding face $\gamma_v\in\Sigma_{d+1}$ is contained in a face $\psi\in\Sigma_{d+2}$ so that $\gamma_v$ is the only $(d+1)$-face of $\psi$ so that $\gamma_v\in\Sigma_{d+1}\setminus\Sigma'_{d+1}$.
\end{defn}

\begin{prop}\label{prop:GHom}
Let $\Sigma\subset\R^{n+1}$ be a pure, hereditary, $(n+1)$-dimensional fan and $\Sigma'\subset\partial\Sigma$ a subfan. Suppose that $W\subset\R^{n+1}$ is a $d$-dimensional subspace so that $W\subset\bigcap\limits_{\tau\in\Sigma_n\setminus\Sigma'_n} \mbox{aff}(\tau)$.  Then
\begin{enumerate}
\item If $\Sigma_d\setminus\Sigma'_d=\emptyset$, then $H_d(\cR[\Sigma,\Sigma'])=0$ and
\[
H_{d+1}(\cR[\Sigma,\Sigma'])=\left\lbrace
\begin{array}{rl}
0 & v_b\in G_W(\Sigma,\Sigma')\\
S & \mbox{otherwise}
\end{array}
\right.
\]
\item If $\Sigma_d\setminus\Sigma'_d\neq\emptyset$, then $H_{d+1}(\cR[\Sigma,\Sigma'])=H_d(\cR[\Sigma,\Sigma'])=0$.
\end{enumerate}
\end{prop}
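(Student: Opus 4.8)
The plan is to reduce the statement to a purely topological computation about the link complex $\lk(\Sigma)$ relative to $\lk(\Sigma')$, using Proposition~\ref{prop:LowHom}, and then to identify the relevant low-degree (co)homology groups combinatorially via the graph $G_W(\Sigma,\Sigma')$. Since $W$ has dimension $d$ and $W\subset\mbox{aff}(\tau)$ for every $\tau\in\Sigma_n\setminus\Sigma'_n$, every codimension one face that is \emph{not} in $\Sigma'$ contains $W$ in its affine span; in particular every face $\gamma\in\Sigma_i\setminus\Sigma'_i$ with $i\le d$ must also have $W\subset\mbox{aff}(\gamma)$ only if $\dim\mbox{aff}(\gamma)\ge d$, which forces $i\ge d$ unless $\gamma\in\Sigma'$. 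This is the structural observation that makes the complex $\cR[\Sigma,\Sigma']$ ``start'' (in the relevant range) at homological degree $d$ or $d+1$. Concretely, I would first show that $\PC(\Sigma)$ relative to $\PC(\Sigma')\cup\lk(\Sigma)$ has its chain complex concentrated (below degree $d+2$) in degrees $d$ and $d+1$, so that $H_{d+1}$ and $H_d$ are computed by the single differential $\delta_{d+1}\colon \cR[\Sigma,\Sigma']_{d+1}\to\cR[\Sigma,\Sigma']_d$. By Proposition~\ref{prop:LowHom}, $H_i(\cR[\Sigma,\Sigma'])\cong H_{i-1}(\lk(\Sigma),\lk(\Sigma'))$ for $i\ge 2$ (and $d+1\ge 2$ since $d\ge 1$; the case $d=0$ would need the separate $i=0,1$ clauses, which I'd check directly).

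Next I would interpret the differential $\delta_{d+1}$ in terms of the graph $G_W(\Sigma,\Sigma')$. The key point, guaranteed by Lemma~\ref{lem:TwoFace}, is that each $(d+2)$-face $\psi\in\Sigma_{d+2}\setminus\Sigma'_{d+2}$ has at most two faces in $\Sigma_{d+1}\setminus\Sigma'_{d+1}$ whose affine spans contain $W$; since $W\subset\mbox{aff}(\psi)$ and the other boundary faces of $\psi$ contribute nothing in this range (they lie in $\Sigma'$ or do not contain $W$... the complex $\cR[\Sigma,\Sigma']_{d+1}$ only records $(d+1)$-faces not in $\Sigma'$, and a $(d+1)$-face containing $W$ is exactly a codimension-one face of $\psi$ containing $W$), the matrix of $\delta_{d+1}$ is, up to signs, exactly the incidence matrix of the graph $G_W(\Sigma,\Sigma')$: rows indexed by $(d+1)$-faces (vertices of $G_W$), columns by $(d+2)$-faces (edges of $G_W$), with the distinguished vertex $v_b$ absorbing the ``one-face'' columns. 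Thus $\mbox{coker}(\delta_{d+1}) = H_d$ is, as an $S$-module, a free module of rank equal to the number of connected components of $G_W(\Sigma,\Sigma')$ minus one when $\Sigma_d\setminus\Sigma'_d=\emptyset$ — wait, I need to be careful: $H_d$ is the cokernel only when $\cR[\Sigma,\Sigma']_{d-1}$ contributes nothing, i.e. when $\Sigma_{d-1}\setminus\Sigma'_{d-1}=\emptyset$, which holds precisely under the hypothesis $\Sigma_d\setminus\Sigma'_d=\emptyset$ of case (1) together with the dimension count (no $(d-1)$-face outside $\Sigma'$ can contain $W$). Under the hereditary hypothesis $G_W(\Sigma,\Sigma')$ is connected (this is essentially the statement that $\lk(\Sigma)$ minus $\lk(\Sigma')$ is connected in the relevant sense — it follows from heredity applied to the star of a face $\gamma$ with $W\subset\mbox{aff}(\gamma)$, cf. the remark that $\mbox{st}(\gamma)\subset\Sigma_{W,\sigma}$), so $H_d=0$ always. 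Then $H_{d+1}=\ker(\delta_{d+1})$ is $S$ if the graph has a nontrivial first homology coming from $v_b$ being \emph{absent}, i.e. the cycle space of the graph-with-or-without-$v_b$: if $v_b\notin G_W$, the graph is a connected graph whose first Betti number... no — rather, if $v_b\notin G_W$ then the chain complex computes $\widetilde H_d(\mbox{point-like space})$; the precise statement is that the reduced top homology of the relative complex is $S$ exactly when there is no ``boundary'' face, i.e. when $\lk(\Sigma)\setminus\lk(\Sigma')$ is a pseudomanifold without boundary in the appropriate sense, which translates to $v_b\notin G_W$.

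For case (2), when $\Sigma_d\setminus\Sigma'_d\neq\emptyset$, the complex $\cR[\Sigma,\Sigma']$ is nonzero in degree $d$ (and, by the dimension count, in degrees $\ge d$ only, with degree $d$ genuinely present). Here I would argue topologically: $\lk(\Sigma)$ relative to $\lk(\Sigma')$, intersected with the flat $I(W)$-data, has a nonempty $(d-1)$-dimensional part, and heredity/connectedness forces the relative homology to vanish in degrees $d-1$ and $d$ (i.e. $H_d$ and $H_{d+1}$ of $\cR$), since a connected pseudomanifold-with-boundary of dimension $d$ has vanishing top and next-to-top relative homology — equivalently, the incidence structure of $(d+1)$-, $(d)$-, and $(d+2)$-faces containing $W$ has no homology in the top two spots once there is a genuine $d$-face. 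The main obstacle I anticipate is making the identification of $\delta_{d+1}$ with the graph incidence matrix fully rigorous — in particular correctly handling signs, the role of the distinguished vertex $v_b$ (the ``boundary'' vertex), and the bookkeeping that shows no $(d+2)$-face contributes a third nonzero entry to a column — together with cleanly deducing connectedness of $G_W(\Sigma,\Sigma')$ from the hereditary hypothesis. Once the graph-theoretic translation is in place, the homology computation ($\ker$ and $\mbox{coker}$ of an incidence matrix of a connected graph, possibly with a distinguished vertex) is routine linear algebra over $S$.
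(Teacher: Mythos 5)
There is a genuine indexing error that infects most of the argument: the boundary map of the graph $G_W(\Sigma,\Sigma')$ is the cellular map $\delta_{d+2}\colon\cR[\Sigma,\Sigma']_{d+2}\to\cR[\Sigma,\Sigma']_{d+1}$, not $\delta_{d+1}$. By Definition~\ref{def:WGraph}, vertices of $G_W$ are the $(d+1)$-faces and edges are the $(d+2)$-faces, and your own description (``rows indexed by $(d+1)$-faces, columns by $(d+2)$-faces'') is the matrix of $\delta_{d+2}$. You then write ``$\mbox{coker}(\delta_{d+1})=H_d$'' and ``$H_{d+1}=\ker(\delta_{d+1})$'', both of which are wrong under the case-(1) hypothesis: there $\Sigma_d\setminus\Sigma'_d=\emptyset$ forces $\cR[\Sigma,\Sigma']_d=0$, so $H_d=0$ trivially (no graph input needed), while $H_{d+1}=\cR[\Sigma,\Sigma']_{d+1}/\mathrm{im}(\delta_{d+2})=\mathrm{coker}(\delta_{d+2})=H_0\bigl(G_W(\Sigma,\Sigma'),v_b;S\bigr)$, with $v_b$ read as the empty set if absent. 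Since $G_W(\Sigma,\Sigma')$ is connected (a consequence of heredity, which you assert only parenthetically and do not prove), this relative $H_0$ is $0$ if $v_b$ is present and $S$ otherwise. Your proposal never cleanly states this chain $H_{d+1}=\mathrm{coker}(\delta_{d+2})=H_0(G_W,v_b)$; the middle of your second paragraph visibly backtracks (``wait, I need to be careful,'' ``no --- rather'') without arriving at it.

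The larger gap is case (2). Your appeal to ``a connected pseudomanifold-with-boundary has vanishing top and next-to-top relative homology'' is a heuristic, not a proof, and it is not at all obvious how to verify that the collection of faces in $\Sigma$ not in $\Sigma'$ whose affine spans contain $W$ forms such an object. The paper's argument uses a decisive structural fact you do not mention: if there exists $\gamma\in\Sigma_d\setminus\Sigma'_d$, then heredity plus the hypothesis $W\subset\bigcap\mathrm{aff}(\tau)$ forces $\Sigma=\mathrm{st}_\Sigma(\gamma)$ (if some facet $\sigma'$ did not contain $\gamma$, following a dual-graph path from a facet containing $\gamma$ to $\sigma'$ produces a wall $\tau$ whose affine hull contains both $W$ and $\gamma$, contradicting $\gamma\not\subset\sigma'$). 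Consequently $\gamma$ is the unique face in $\Sigma_d\setminus\Sigma'_d$, so $\cR[\Sigma,\Sigma']_d=S$ and $\delta_{d+1}$ is surjective (giving $H_d=0$); moreover $v_b$ is absent from $G_W$, so $\mathrm{coker}(\delta_{d+2})=S$, and since $\delta_{d+1}$ factors through this cokernel onto $\cR[\Sigma,\Sigma']_d=S$, the induced map is an isomorphism, whence $\mathrm{im}(\delta_{d+2})=\ker(\delta_{d+1})$ and $H_{d+1}=0$. Without this ``$\Sigma$ is a star'' observation, your case-(2) argument does not go through. The use of Proposition~\ref{prop:LowHom} to pass to $\lk(\Sigma)$ is also an unnecessary detour; the paper works directly with the cellular chain complex and does not need it.
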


\begin{proof}
The main point of this proof is that the top cellular boundary map
\[
\phi_W: \bigoplus_{e\in G_W(\Sigma,\Sigma')} S \rightarrow \bigoplus_{v\neq v_b\in G_W(\Sigma,\Sigma')} S
\]
of $G_W(\Sigma,\Sigma')$ (relative to $v_b$, if $v_b$ is present) is really the same (by definition!) as the cellular map
\[
\delta_{d+2}:\cR[\Sigma,\Sigma']_{d+2}\rightarrow \cR[\Sigma,\Sigma']_{d+1}.
\]
$(1)$ Since $\cR[\Sigma,\Sigma']_d=0$, $H_d(\cR[\Sigma,\Sigma'])=0$ and $H_{d+1}(\cR[\Sigma,\Sigma'])=\mbox{coker}(\phi_W)=H_0(G_W(\Sigma,\Sigma'),v_b;S)$, where $v_b$ is understood to be the emptyset if $G_W(\Sigma,\Sigma')$ has no vertex $v_b$.  Since $G_W(\Sigma,\Sigma')$ is connected, this proves $(1)$.

$(2)$ Let $\gamma\in\Sigma_d\setminus\Sigma'_d\neq\emptyset$.  We claim that $\Sigma=\mbox{st}_\Sigma(\gamma)$.  Suppose there is a facet $\sigma'\not\in\mbox{st}_\Sigma(\gamma)$.  Since $\Sigma$ is hereditary, we may assume that $\sigma'$ is adjacent to a $\sigma\in\mbox{st}_\Sigma(\gamma)$.  Set $\tau=\sigma\cap\sigma'$ and $H=\mbox{aff}(\tau)$.  $\tau\not\in\Sigma'$ since $\tau$ is interior, hence $W\subset H$ and $\gamma\subset W\cap\sigma\subset H\cap\sigma$.  This is a contradiction since $H\cap\sigma=\tau$ and we assumed $\gamma\not\in\sigma'$.  Hence $\Sigma=\mbox{st}_\Sigma(\gamma)$, and $\gamma$ is the unique face in $\Sigma_d\setminus\Sigma'_d$.  It follows that $H_d(\cR[\Sigma,\Sigma'])=0$ since the map
\[
\delta_{d}:\cR[\Sigma,\Sigma']_{d+1}=\oplus_{\gamma\in\Sigma_{d+1}\setminus\Sigma'_{d+1}} S \rightarrow S=\cR[\Sigma,\Sigma']_d
\]
is surjective.  Furthermore, in this case $v_b$ is \textit{not} present in $G_W(\Sigma,\Sigma')$, so $\mbox{coker}(\phi_W)$ $=S=\mbox{coker}(\delta_{d+2})$ and $H_{d+2}(\cR[\Sigma,\Sigma'])=0$ as well.
\end{proof}

\begin{exm}\label{ex:SCCentralLoc2}
Let $\Sigma$, $W$, and $\Sigma_{W,\sigma}$ all be as in Example~\ref{ex:SCCentralLoc}.  The graph $G_W(\Sigma_{W,\sigma},\partial\Sigma_{W,\sigma})$ has four vertices corresponding to the four interior codimension one faces and four edges which connect these vertices into a cycle.  There is no vertex $v_b$ since all four facets having a codimension one face whose affine span contains $W$ have precisely $2$ such faces.  Hence $H_2(\cR[\Sigma_{W,\sigma},\partial\Sigma_{W,\sigma}])=S$ by Proposition~\ref{prop:GHom}, as we computed in Example~\ref{ex:SCCentralLoc}.
\end{exm}

\begin{exm}\label{ex:SCXLine}
Let $\Sigma$ be as in Example~\ref{ex:SCCentralLoc}.  Let $V\in L_{\Sigma,\Sigma^{-1}}$ be the $x$-axis, which we obtain as the intersection of the four affine spans shown in Figure~\ref{fig:SCYLine}.  The corresponding lattice complex $\Sigma_{V,\sigma}$, where $\sigma$ is any of the three facets having a codimension one face $\gamma$ so that $V\in\mbox{supp}(\gamma)$, is shown in Figure~\ref{fig:PSCYLine}.  The graph $G_V(\Sigma_{V,\sigma},\Sigma^{-1}_{V,\sigma})$ can have three or four vertices depending on whether we impose vanishing along none, one, or both of the codimension one faces of $\partial\Sigma\cap\Sigma_{V,\sigma}$.  $G_V(\Sigma_{V,\sigma},\Sigma^{-1}_{V,\sigma})$ has $v_b$ as a vertex unless $\Sigma^{-1}$ contains neither of these codimension one faces, hence this is the only case which leads to a nontrivial contribution to $H_2(\cR/[\Sigma,\Sigma^{-1}])$.  For such a choice of $\Sigma^{-1}$, Proposition~\ref{prop:GHom} yields that $H_2(\cR[\Sigma,\Sigma^{-1}])=S$.  The same arguments as in Example~\ref{ex:SCCentralLoc} yield that
\[
H_2(\cR/\cJ[\Sigma,\Sigma^{-1}])_{I(V)}=(S/\sum_{i=1}^4 J(\tau_i))_{I(V)},
\]
where $\tau_1,\ldots,\tau_4$ are the four codimension one faces of $\Sigma_{V,\sigma}$ whose affine spans contain $V$.

\begin{figure}[htp]
\begin{subfigure}[b]{.4\textwidth}
\includegraphics[width=\textwidth]{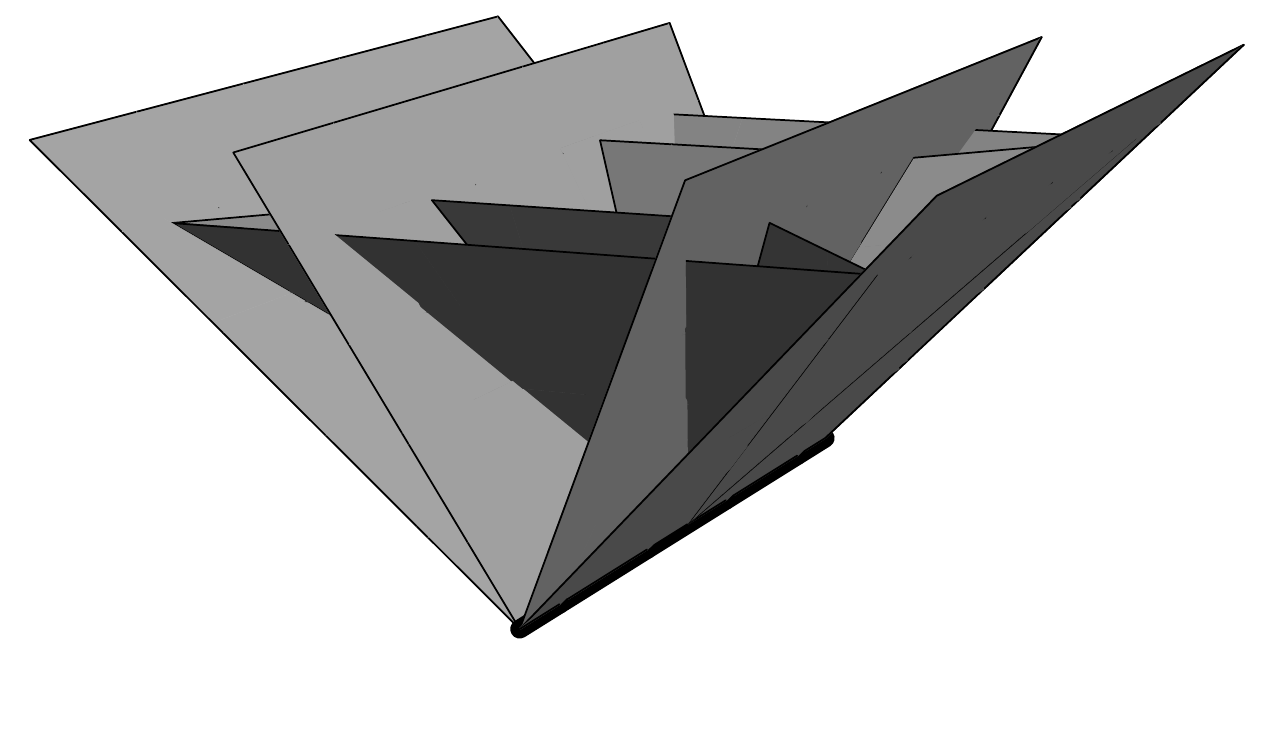}
\caption{$V$}\label{fig:SCYLine}
\end{subfigure}
\begin{subfigure}[b]{.4\textwidth}
\includegraphics[width=\textwidth]{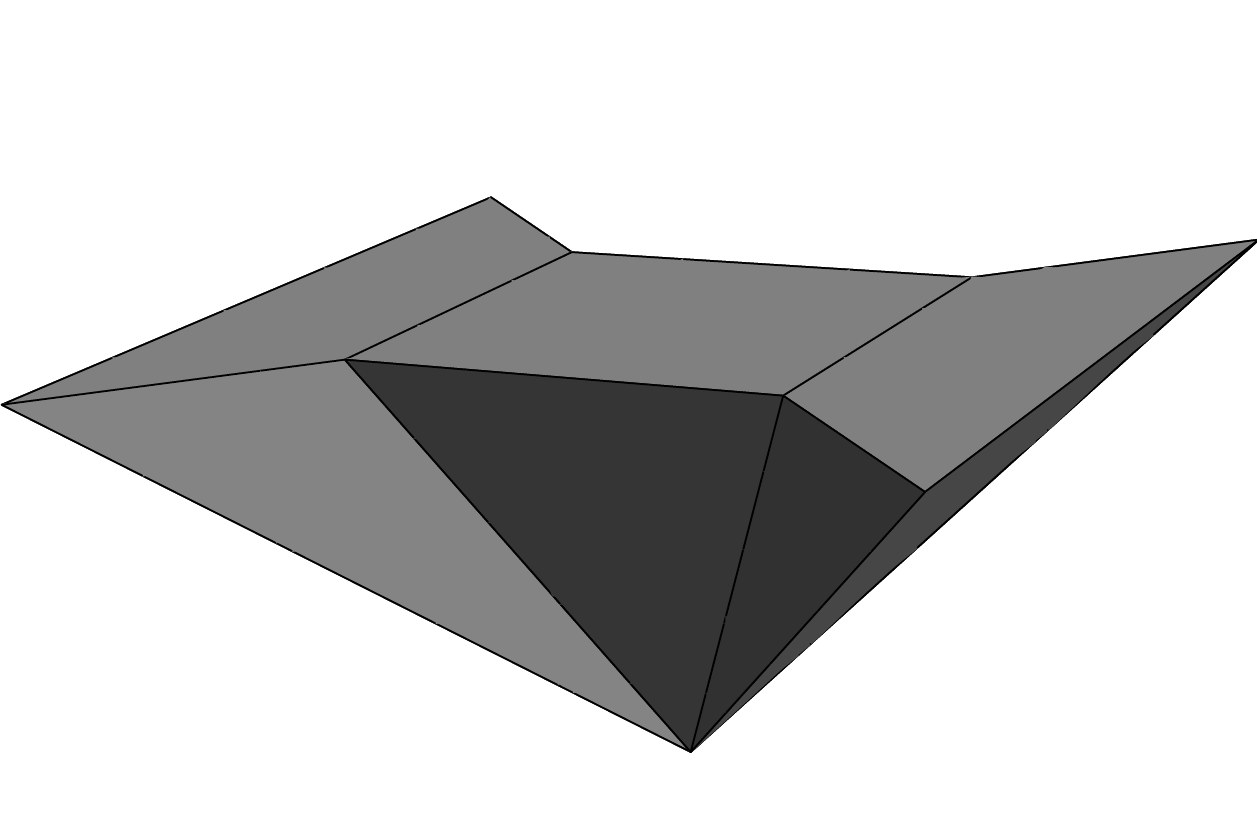}
\caption{$\PC(\Sigma_{V,\sigma})$}\label{fig:PSCYLine}
\end{subfigure}
\caption{}
\end{figure}

\end{exm}

\section{Associated Primes of the Spline Complex}\label{sec:Ass}

The primary objective of this section is to describe associated primes of the homology modules $H_i(\cR/\cJ[\Sigma,\Sigma'])$, which requires some commutative algebra.  Recall that if $M$ is an $R$-module, a prime $P\subset R$ is \textit{associated} to $M$ if there is an injection
\[
R/P \xrightarrow{\cdot m} M
\]
given by multiplication by some $m\in M$.  Equivalently, $P=\mbox{ann}(m)$.  The set of associated primes of $M$ is denoted by $\mbox{Ass}_R(M)$.

We collect a few facts about associated primes which we will need.  The first two can be found in~\cite[\S 6]{Matsumura}.  The third is a special case of a general theorem~\cite[Theorem 23.2]{Matsumura} which describes behavior of associated primes under flat extensions.  Since we only need a particular case, we give a proof here.

\begin{prop}\label{prop:AssFacts}
Let $R$ be a commutative ring, and $M,M_1,\ldots,M_k$ $R$-modules.
\begin{enumerate}
\item $P\in\mbox{Ass}_R(M)\iff PR_P\in\mbox{Ass}_{R_P}(M_P)$
\item $\mbox{Ass}(\bigoplus_{i=1}^k M_i)=\bigcup_{i=1}^k \mbox{Ass}(M_i)$
\item Suppose $S=R[x_1,\ldots,x_k]$.  Then
\[
\mbox{Ass}_S(M\otimes_R S)=\{PS|P\in \mbox{Ass}_R(M)\}.
\]
\end{enumerate}
\end{prop}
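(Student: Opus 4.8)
The plan is to prove the three statements of Proposition~\ref{prop:AssFacts} in order, with the bulk of the work going into part $(3)$, since parts $(1)$ and $(2)$ are standard. For part $(1)$, I would recall that localization is exact and that $(R/P)_P = R_P/PR_P$; if $R/P \xrightarrow{\cdot m} M$ is injective then localizing gives an injection $R_P/PR_P \hookrightarrow M_P$, showing $PR_P \in \mathrm{Ass}_{R_P}(M_P)$. Conversely, if $PR_P = \mathrm{ann}_{R_P}(m/s)$ for some $m \in M$, $s \notin P$, then since $m/s$ and $m/1$ generate the same submodule, $P R_P = \mathrm{ann}_{R_P}(m/1)$; one then checks that $P = \mathrm{ann}_R(m)$ after possibly clearing a denominator — the annihilator of $m$ in $R$ is a $P$-primary ideal whose localization is $PR_P$, hence equals $P$ if $P$ is prime (or one invokes the standard correspondence between primary ideals and their localizations). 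For part $(2)$, the inclusion $\mathrm{Ass}(M_i) \subseteq \mathrm{Ass}(\bigoplus M_i)$ is immediate from composing an injection $R/P \hookrightarrow M_i$ with the inclusion $M_i \hookrightarrow \bigoplus M_j$; the reverse inclusion follows because if $P = \mathrm{ann}(m)$ with $m = (m_1,\dots,m_k)$, then $P \subseteq \mathrm{ann}(m_j)$ for each $j$ and $P = \bigcap_j \mathrm{ann}(m_j)$, so since $P$ is prime it equals $\mathrm{ann}(m_j)$ for some $j$ by prime avoidance applied to the finite intersection.

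For part $(3)$, the inclusion $\supseteq$ is easy: if $P = \mathrm{ann}_R(m)$ for $m \in M$, then base-changing the injection $R/P \hookrightarrow M$ along the flat map $R \to S$ gives an injection $(R/P)\otimes_R S = S/PS \hookrightarrow M \otimes_R S$, and $PS$ is prime in $S$ (a polynomial ring over a domain is a domain), so $PS \in \mathrm{Ass}_S(M\otimes_R S)$. The harder inclusion $\subseteq$ is where I expect the main obstacle. The plan is: let $Q \in \mathrm{Ass}_S(M \otimes_R S)$, say $Q = \mathrm{ann}_S(\mu)$ for some $\mu \in M \otimes_R S$. Set $P = Q \cap R$. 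First one argues $P \in \mathrm{Ass}_R(M)$; the cleanest route is to pick a finite set of generators $m_1,\dots,m_t \in M$ such that $\mu$ is an $S$-combination of the $m_i \otimes 1$, work inside the finitely generated $R$-submodule $N = \sum R m_i$, reduce to $M$ finitely generated (since $\mathrm{Ass}$ commutes with direct limits and $M$ is the direct limit of its f.g.\ submodules, and $\mu$ already lives in $N\otimes_R S$), and then invoke the classical fact that for $N$ finitely generated, $\mathrm{Ass}_S(N \otimes_R S) = \mathrm{Ass}_S(N[x_1,\dots,x_t])$ consists exactly of primes of the form $P S$ with $P \in \mathrm{Ass}_R(N)$ — this is the content of~\cite[Theorem~23.2]{Matsumura} in the one-variable case, iterated. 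So I would actually prove the single-variable statement $\mathrm{Ass}_{R[x]}(M[x]) = \{PR[x] : P \in \mathrm{Ass}_R(M)\}$ and induct on $k$.

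For the single-variable core, given $Q \in \mathrm{Ass}_{R[x]}(M[x])$ with $Q = \mathrm{ann}(\mu)$, $\mu = \sum_{i=0}^d m_i x^i$, set $P = Q \cap R$; for $r \in P$ we get $r\mu = 0$, so $r m_i = 0$ for all $i$, hence $P \subseteq \mathrm{ann}_R(m_i)$ for every $i$ that appears. The key step is to show $P$ is actually an associated prime of $M$: one uses that $Q$ being prime and $Q \cap R = P$ forces $Q = PR[x]$ (a prime of $R[x]$ lying over $P$ and containing no polynomial with unit leading coefficient mod $P$ must be the extension $PR[x]$, because $R[x]/PR[x] = (R/P)[x]$ is a domain and $Q/PR[x]$ is an associated prime of $M[x]$ localized appropriately that meets $R/P$ trivially — a nonzero prime of $(R/P)[x]$ meeting $R/P$ in $0$ is generated by an irreducible polynomial, but such a polynomial cannot annihilate a nonzero element of a free-ish module over the domain $R/P$ when $M$ is torsion-free modulo $P$). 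Making this last dichotomy rigorous — that $Q$ contracts to $P$ and is either $PR[x]$ or else yields a contradiction — is the technical heart; I would handle it by replacing $R$ with $R/\mathrm{ann}(m_i)$-type quotients and reducing to: over a domain $D$, if $f(x) \in D[x]$ has positive degree and $f(x)\cdot n = 0$ in a $D$-module with $n$ nonzero, a leading-coefficient argument (the top coefficient of $f$ times the relevant $m_i$ vanishes) drives down the degree, eventually showing the annihilator is generated in degree $0$, i.e.\ $Q = PR[x]$. Once $Q = PR[x]$, choosing $i$ with $m_i \neq 0$ and $\mathrm{ann}_R(m_i) \supseteq P$, a minimality/prime-avoidance argument over the finitely many $m_i$ pins down some $m_i$ with $\mathrm{ann}_R(m_i) = P$, giving $P \in \mathrm{Ass}_R(M)$ and completing the proof.
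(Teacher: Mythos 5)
Your treatments of parts (1) and (2) are fine (the paper simply cites Matsumura for these), and your forward direction of (3), the inclusion $\supseteq$ via flat base change, matches the paper exactly. The gap is in the reverse inclusion $\subseteq$, where you take a structurally different route that does not hold up.

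You try to first establish $Q = PR[x]$ using the classification of primes of $R[x]$ contracting to $P$, and only afterwards deduce $P \in \mbox{Ass}_R(M)$. Two things go wrong. First, your asserted fact that ``a nonzero prime of $(R/P)[x]$ meeting $R/P$ in $0$ is generated by an irreducible polynomial'' is false over a general domain $R/P$; it is true over $\mbox{Frac}(R/P)[x]$, but the corresponding prime of $(R/P)[x]$ is a contraction of a principal ideal and need not itself be principal, so the dichotomy you invoke is not available. Second, the ``leading-coefficient drives down the degree'' sketch is not a contradiction as stated: if $f = \sum a_j x^j \in Q$ has positive degree and $\mu = \sum m_i x^i$, the top coefficient equation gives $a_d m_e = 0$ for the leading indices, but you have no control on $\mbox{ann}_R(m_e)$ beyond $\mbox{ann}_R(m_e) \supseteq P$, so $a_d \notin P$ does not force $a_d m_e \neq 0$. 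Finally, the detour through finitely generated submodules and direct limits is not needed at all; the elementwise argument works directly.

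The paper reverses your order and thereby avoids all of this. It first observes that $P = Q \cap R = \bigcap_i \mbox{ann}_R(m_i)$ (since $r\mu = 0 \iff rm_i = 0$ for all $i$), and because $P$ is prime and this is a finite intersection, $P = \mbox{ann}_R(m_i)$ for some $i$, giving $P \in \mbox{Ass}_R(M)$ with no structure theory whatsoever. Then, to see $Q \subseteq PS$, take $g = \sum a_j x^j \in Q$ and expand the coefficient equations of $g\mu = 0$: the equation in degree $0$ gives $a_0 m_0 = 0$, multiplying the degree-$1$ equation by $a_0$ gives $a_0^2 m_1 = 0$, and inductively $a_0^{c+1} m_c = 0$, so $a_0^{k+1} \in \bigcap_j \mbox{ann}(m_j) = P$ and hence $a_0 \in P$ by primality. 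Since then $a_0 m_j = 0$ for all $j$, the $a_0$ terms drop out of every equation, and one repeats with $a_1, a_2, \ldots$. This degree-by-degree bootstrapping is the key idea you are missing; it replaces the structural claims about primes of $(R/P)[x]$ with an explicit computation that works over an arbitrary commutative ring.
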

\begin{proof}[Proof of (3)]
The general case follows directly from the case $S=R[x]$ by induction on $k$.  So we prove
\[
\mbox{Ass}_S(M\otimes_R S)=\{PS|P\in \mbox{Ass}_R(M)\}
\]
when $S=R[x]$.  First, $PS$ is prime because $S/PS=R/P[x]$ is a domain.  Now, suppose $P\in \mbox{Ass}_R(M)$.  Then there is an injection
\[
R/P\hookrightarrow M.
\]
Since $S$ is a \textit{flat} extension of $R$, tensoring with $S$ is exact.  Tensoring the above injection with $S$ hence yields an injection
\[
S/PS\hookrightarrow M\otimes_R S,
\]
So $PS\in \mbox{Ass}_S(M\otimes_R S)$.  Now suppose $Q\in \mbox{Ass}_S(M\otimes_R S)$.  Via the identification $M\otimes_R S=M[x]$, $Q=\mbox{ann}_S(f)$ for some $f=\sum_i m_ix^i$.  First we show that $P=Q\cap S=\mbox{ann}_R(f)\in \mbox{Ass}_R (M)$.  We need to show that $P=\mbox{ann}_R(m)$ for some $m\in M$.  $rf=0$ for some $r\in R$ iff $rm_i=0$ for every $i$.  Hence $P=\cap_{i=0}^k \mbox{ann}_R(m_i)$.  But $P$ is prime, so we must have $P=\mbox{ann}_R(m_i)$ for some $i$.  Hence $P\in \mbox{Ass}_R (M)$.  Now we show that $Q=PS$.  Suppose $g=\sum_{j=0}^k a_jx^j\in Q$, with $a_j\in R$.  We need to show that $a_j\in P$ for $j=0,\ldots,k$.  Allowing some $a_j,m_i$ to be zero, we may assume that $f=\sum_{i=0}^k m_ix^i$.  Expanding $fg$ gives
\[
\sum_c \left( \sum_{i+j=c} r_jm_i \right) x^c.
\]
Setting $fg=0$ yields the equations
\[
\sum_{i+j=c} r_jm_i=0
\]
for every $c$.  To establish that $a_0\in P$, note first that $a_0m_0=0\implies a_0\in \mbox{ann}_R(m_0)$.  Now multiply the equation
\[
a_0m_1+a_1m_0=0
\]
by $a_0$ to obtain $a_0^2m_1=0$, so $a_0^2\in \mbox{ann}_R(m_1)$.  Continuing in this way, we see that $a_0^{k+1}\in \cap_{j=0}^k \mbox{ann}_R(m_j)=P$.  But $P$ is prime, so $a_0\in P$ and all terms involving $a_0$ in the above equations drop out.  Now repeat this process with each successive $a_j$, yielding $g\in PS$.
\end{proof}

\begin{lem}\label{lem:ProjAssW}
Let $\Sigma\subset\R^{n+1}$ be a pure, hereditary, $(n+1)$-dimensional fan, $\alpha$ a choice of smoothness parameters, $\Sigma'\subset\Sigma$ a subfan, and $W\subset\R^{n+1}$ a linear subspace so that $W\subset\cap_{\tau\in \Sigma_n\setminus\Sigma'_n}\mbox{aff}(\tau)$.  Then 
\[
P\in\mbox{Ass}_S(H_i(\cR/\cJ[\Sigma,\Sigma'])) \implies P\subseteq I(W)
\]
\end{lem}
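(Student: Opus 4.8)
The plan is to realize $\cR/\cJ[\Sigma,\Sigma']$ as the flat base change of an analogous complex over a polynomial subring of $S$ in fewer variables, and then to exploit that the homology of a complex of graded modules is graded.

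Write $d=\dim W$. After a linear change of coordinates on $\R^{n+1}$ (which changes neither the combinatorics of $\Sigma$ nor the property that each $I(\tau)$ is generated by a linear form), I may assume $I(W)=(x_0,\dots,x_{n-d})$. Set $R=\R[x_0,\dots,x_{n-d}]\subseteq S$, so that $S=R[x_{n-d+1},\dots,x_n]$ is a free, hence flat, $R$-module and $I(W)=\mathfrak m_R S$, where $\mathfrak m_R=(x_0,\dots,x_{n-d})$ is the irrelevant maximal ideal of $R$. The key observation is that for every $\gamma\in\Sigma_i\setminus\Sigma'_i$, each codimension one face $\tau\in\Sigma^{\ge 0}_n$ with $\gamma\subseteq\tau$ satisfies $\tau\notin\Sigma'$ (a subfan is closed under passing to faces), hence $\tau\in\Sigma_n\setminus\Sigma'_n$, and therefore $W\subseteq\mbox{aff}(\tau)$ by hypothesis. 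Thus $l_\tau\in I(W)$, and being a linear form, $l_\tau\in R$; consequently $J(\gamma)=J_R(\gamma)S$, where $J_R(\gamma)\subseteq R$ is defined by the same formula as $J(\gamma)$ but inside $R$. Since the differentials of $\cR/\cJ[\Sigma,\Sigma']$ are (relative) cellular boundary maps with integer entries, it follows that
\[
\cR/\cJ[\Sigma,\Sigma']\;\cong\;\cR_R/\cJ_R[\Sigma,\Sigma']\otimes_R S
\]
as complexes of $S$-modules, where $\cR_R/\cJ_R[\Sigma,\Sigma']$ is the complex with $i$-th term $\bigoplus_{\gamma\in\Sigma_i\setminus\Sigma'_i} R/J_R(\gamma)$ and the same cellular differentials.

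Flatness of $S$ over $R$ then gives $H_i(\cR/\cJ[\Sigma,\Sigma'])=H_i(\cR_R/\cJ_R[\Sigma,\Sigma'])\otimes_R S$, so by Proposition~\ref{prop:AssFacts}(3) every $P\in\mbox{Ass}_S(H_i(\cR/\cJ[\Sigma,\Sigma']))$ has the form $P=QS$ with $Q\in\mbox{Ass}_R(H_i(\cR_R/\cJ_R[\Sigma,\Sigma']))$. But $\cR_R/\cJ_R[\Sigma,\Sigma']$ is a complex of graded $R$-modules with degree-preserving differentials, so $H_i(\cR_R/\cJ_R[\Sigma,\Sigma'])$ is a graded $R$-module; hence every associated prime $Q$ is a homogeneous prime of $R$, and every homogeneous prime of $R=\R[x_0,\dots,x_{n-d}]$ is contained in $\mathfrak m_R$. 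Therefore $Q\subseteq\mathfrak m_R$, so $P=QS\subseteq\mathfrak m_R S=I(W)$, which is the claim. (When $\Sigma_n\setminus\Sigma'_n=\emptyset$ there are no linear forms $l_\tau$ to worry about and $R$ degenerates appropriately; alternatively $\cR/\cJ=\cR$ is then a complex of free modules, whose homology is free by Proposition~\ref{prop:LowHom}, so one concludes directly.)

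The one step requiring care is the base-change identification $\cR/\cJ[\Sigma,\Sigma']\cong\cR_R/\cJ_R[\Sigma,\Sigma']\otimes_R S$ — specifically, verifying that every linear form entering the definition of $J(\gamma)$, for $\gamma\notin\Sigma'$, already lies in $R$. This is exactly where the hypothesis $W\subseteq\bigcap_{\tau\in\Sigma_n\setminus\Sigma'_n}\mbox{aff}(\tau)$ and the fact that $\Sigma'$ is a subfan get used. Beyond that, the argument only invokes flat base change for associated primes (Proposition~\ref{prop:AssFacts}(3)), exactness of tensoring with a free module, and the homogeneity of the associated primes of a graded module.
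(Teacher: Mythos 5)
Your proof is correct and follows essentially the same route as the paper's: both realize $\cR/\cJ[\Sigma,\Sigma']$ as a flat base change $\C\otimes_{\R[V]}S$ over a polynomial subring in $n+1-\dim W$ variables whose irrelevant ideal extends to $I(W)$, and then invoke Proposition~\ref{prop:AssFacts}(3) together with homogeneity of associated primes of graded modules. (Your justification of the key step is in fact a bit cleaner than the paper's: the paper asserts $W\subset\mbox{aff}(\gamma)$ for every $\gamma\in\Sigma\setminus\Sigma'$, which is not literally forced by the stated hypotheses, whereas you correctly argue at the level of the generating linear forms $l_\tau$ by noting that $\gamma\notin\Sigma'$ and $\gamma\subset\tau$ force $\tau\notin\Sigma'$, hence $W\subset\mbox{aff}(\tau)$; this is exactly what is needed to place $J(\gamma)$ in the subring.)
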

\begin{proof}
Let $d=\dim W$.  Let $V$ be a complementary vector space, so $V\cap W=\mathbf{0}$ and $\dim V=n+1-d$, and let $\pi:\R^{n+1}\rightarrow V$ be the projection onto $V$ with kernel $W$.  Then we can view the coordinate ring $\R[V]$ of $V$ in two ways.  Via the inclusion $i:V\rightarrow \R^{n+1}$ we represent $\R[V]$ as the quotient $S\xrightarrow{i^*} S/I(V)$.  Via the projection $\pi:\R^{n+1}\rightarrow V$ with kernel $W$, we represent $\R[V]\xrightarrow{\pi^*} S$ as an inclusion, where $\R[V]$ is generated as a subalgebra of $S$ by a choice of $n+1-d$ linear forms which vanish on $W$.  Here we will regard $\R[V]$ as a subalgebra of $S$ via $\pi^*$.  We first prove that there is a complex $\C$ of $\R[V]$-modules so that $\cR/\cJ[\Sigma,\Sigma']\cong \C\otimes_{\R[V]} S$ as complexes.

Denote $J(\gamma)\cap\R[V]$ by $J(\pi(\gamma))$: $\pi(\gamma)$ is a cone in $V$ and smoothness parameters can be assigned naturally to its faces so that this makes notational sense.  The ideals $J(\gamma)$ for $\gamma\in \Sigma\setminus\Sigma'$ are generated by powers of linear forms contained in the subalgebra $\R[V]$, since every face $\gamma\in \Sigma\setminus\Sigma'$ has $W\subset\mbox{aff}(\gamma)$.  It follows that $J(\pi(\gamma))\otimes_{\R[V]} S=J(\gamma)$.
We hence have
\[
\dfrac{S}{J(\gamma)}\cong \dfrac{\R[V]}{J(\pi(\gamma))}\otimes_{\R[V]} S.
\]
This tensor decomposition respects the differential of the complex $\cR/\cJ[\Sigma,\Sigma']$, so the desired complex $\C$ of $\R[W]$-modules is obtained by setting
\[
\C_i=\bigoplus_{\dim \gamma=i} \dfrac{\R[V]}{J(\pi(\gamma))}
\]
with cellular differential.

Now that we have found such a $\C$, we have $H_i(\cR/\cJ[\Sigma,\Sigma'])=H_i(\C\otimes_{\R[V]} S)$. Since $S$ is a \textit{flat} $\R[V]$-algebra, $\otimes_{\R[V]} S$ is exact and hence
\[
H_i(\C\otimes_{\R[V]} S)\cong H_i(\C)\otimes_{\R[V]} S.
\]
Every associated prime of $H_i(\C)$ is contained in the homogeneous maximal ideal $I(\pi(W))=I(W)\cap\R[V]$ of $\R[V]$.  Now the result follows from~\ref{prop:AssFacts} part (3), since associated primes of $H_i(\C)\otimes_{\R[V]} S$ are obtained by extending associated primes of $H_i(\C)$.
\end{proof}

\begin{remark}
There is a natural way to interpret $\C$ geometrically.  From the proof of Lemma~\ref{lem:ProjAssW}, we see that
\[
\C_i=\bigoplus_{\dim \gamma=i} \dfrac{\R[V]}{J(\pi(\gamma))},
\]
where $\pi$ is the projection (with kernel $W$) of $\R^{n+1}$ onto a complementary subspace $V$.  Hence $\C$ `should' be $\cR/\cJ[\pi(\Sigma),\pi(\Sigma)\setminus\pi(\Sigma\setminus\Sigma')]$.  The reason for the choice of $\pi(\Sigma)\setminus\pi(\Sigma\setminus\Sigma')$ is that it is possible for $\pi(\tau)=\pi(\psi)$, where $\psi\in\Sigma\setminus\Sigma'$ and $\tau\in\Sigma'$.  In order for this to make sense in the framework we have presented, $\pi(\Sigma)$ and $\pi(\Sigma)\setminus\pi(\Sigma\setminus\Sigma')$ both need to have the structure of fans.  A priori all we know about $\pi(\Sigma)$ is that it is a union of cones (the projections of the faces of $\Sigma$), and it may be that there is no meaningful way to give this union the structure of a fan.  We describe a special case where it is possible to give $\pi(\Sigma)$ and $\pi(\Sigma)\setminus\pi(\Sigma\setminus\Sigma')$ a meaningful structure, which we will use in \S~\ref{sec:Fourth}.

Suppose $\PC\subset\R^n$ is the star of a vertex $v\in\PC_0$ and $\Sigma=\wPC\subset\R^{n+1}$.  Let $\PC_v$ be the fan with faces $\mbox{cone}(\gamma-v)$ for every $\gamma\in\PC$ with $v\in\gamma$.  This puts faces of $\PC_v$ in a clear dimension preserving bijection with faces of $\PC$ containing $v$.  For each $\tau\in\PC_{n-1}$ with $v\in\tau$, assign the smoothness parameter $\alpha(\tau)$ to the codimension one face $\mbox{cone}(\tau-v)$ of $\PC_v$.  See Figure~\ref{fig:StarConeProj} for this setup.

\begin{figure}
\begin{subfigure}[b]{.25\textwidth}
\centering
\includegraphics[width=\textwidth]{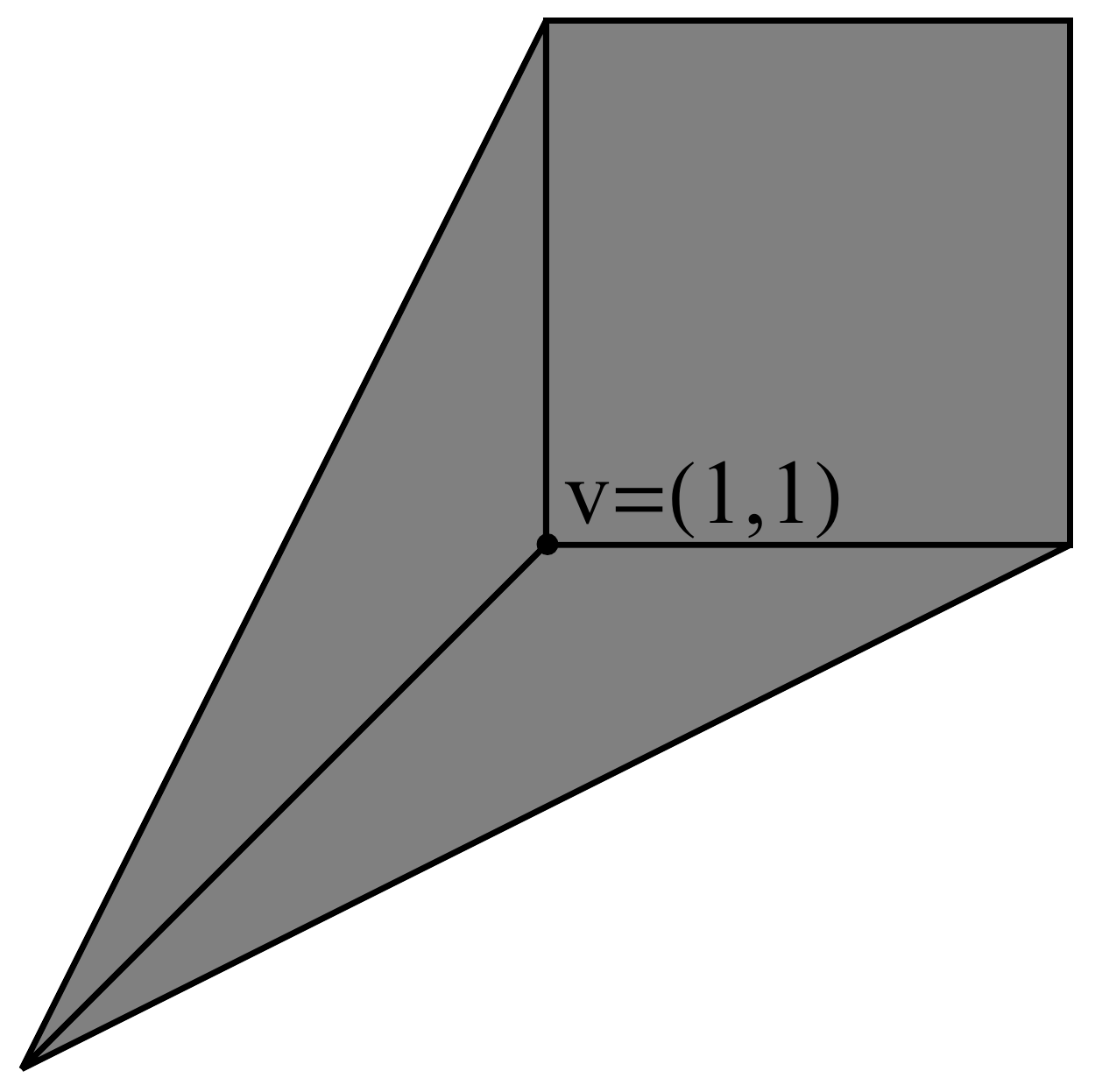}
\caption{$\PC$}
\end{subfigure}
\begin{subfigure}[b]{.4\textwidth}
\includegraphics[width=\textwidth]{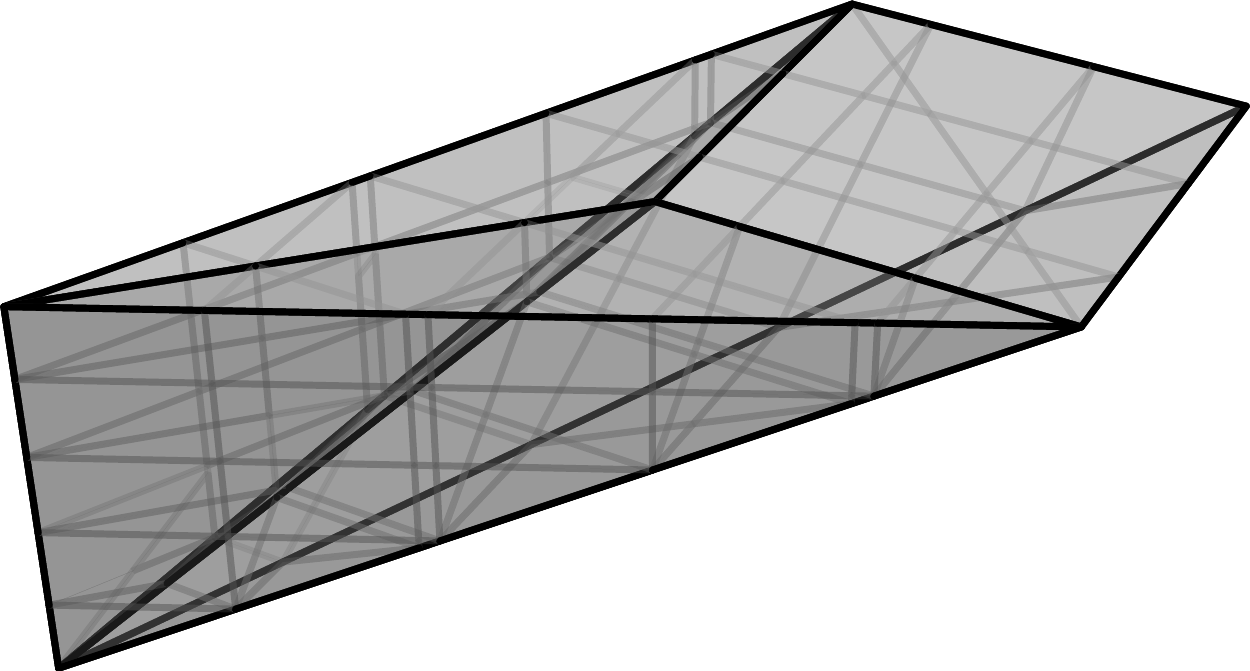}
\caption{$\Sigma$}
\end{subfigure}
\begin{subfigure}[b]{.25\textwidth}
\includegraphics[width=\textwidth]{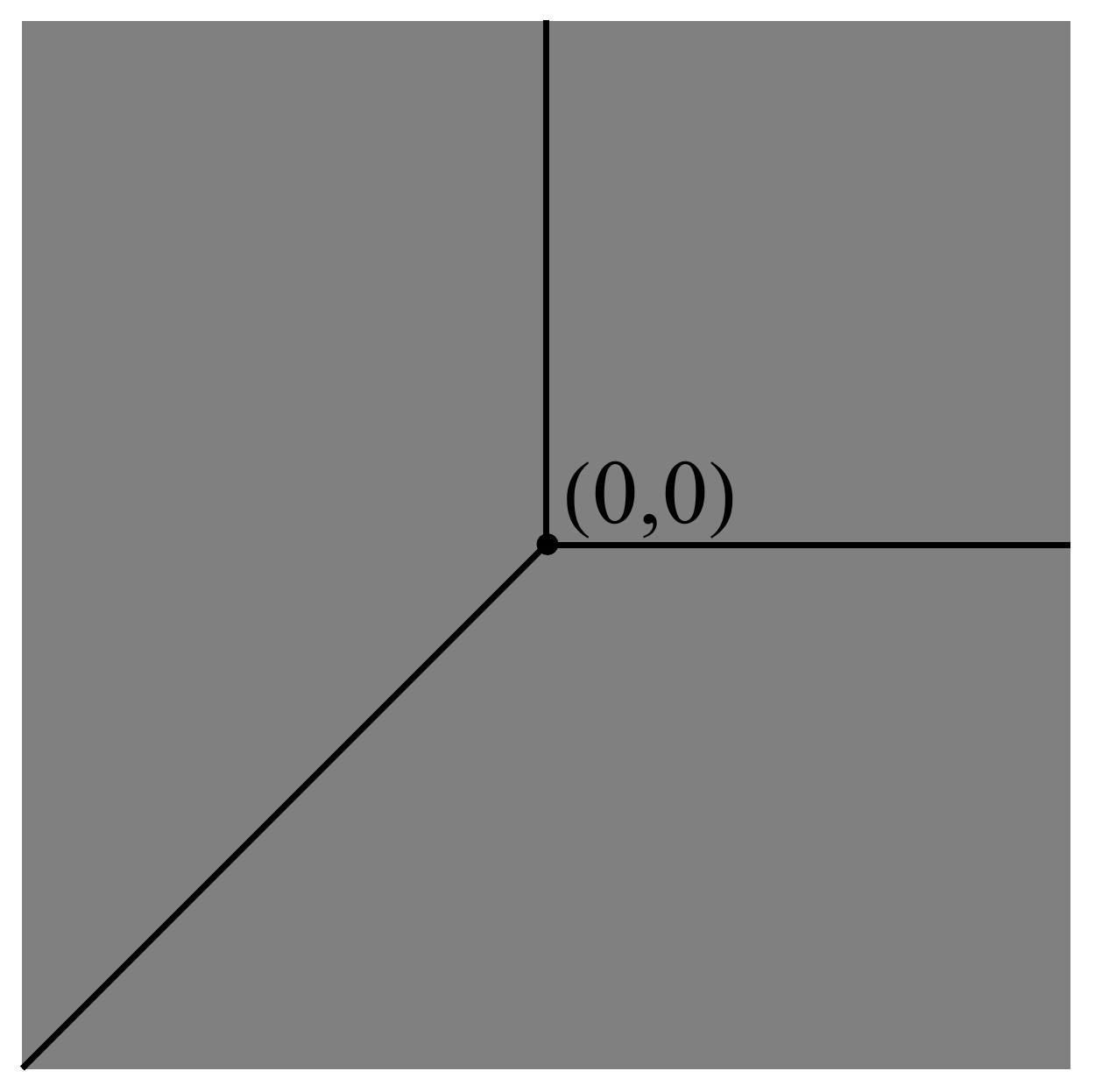}
\caption{$\PC_v$}
\end{subfigure}
\caption{A $2$-dimensional star, its cone and projection}\label{fig:StarConeProj}
\end{figure}

\end{remark}

\begin{lem}\label{lem:StarProjection}
Let $\PC,\Sigma,\PC_v$ be as defined above.  Set $W=\mbox{aff}(\widehat{v})\subset\R^{n+1}$, let $V\cong\R^n$ be the complementary subspace defined by the vanishing of $x_0$, and denote by $\pi:\R^{n+1}\rightarrow V$ the projection with kernel $W$. Also let $R=\R[V]=\R[x_1,\ldots,x_n]$ and $S=\R[x_0,\ldots,x_n]$.  Then
\begin{enumerate}
\item $\pi(\Sigma)=|\PC_v|$
\item $\pi(\Sigma)\setminus\pi(\Sigma\setminus\Sigma^{-1})=\PC_v^{-1}$
\item $\cR/\cJ[\Sigma,\Sigma^{-1}]\cong \cR/\cJ[\PC_v,\PC^{-1}_v](-1)\otimes_R S$,
\end{enumerate}
where the $-1$ in parentheses records a homological shift in dimension.  In particular, if $\Sigma^{-1}=\partial\Sigma$, then $\cR/\cJ[\Sigma,\partial\Sigma]\cong \cR/\cJ[\PC_v,\partial\PC_v](-1)\otimes_R S.$
\end{lem}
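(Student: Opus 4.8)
The plan is to unwind the definitions of $\PC_v$, $\pi$, and the complexes on each side, and to check that the three asserted identities are essentially bookkeeping once the right dictionary between faces of $\Sigma=\wPC$ and faces of $\PC_v$ is in place. First I would set up the bijection explicitly: the faces of $\Sigma$ are $\mbox{cone}(i(\gamma))$ for $\gamma\in\PC$, where $i(x)=(1,x)$, and the faces containing $\widehat{v}$ are exactly those with $v\in\gamma$. Projecting along $W=\mbox{aff}(\widehat{v})$ onto $V=\{x_0=0\}$, I would observe that $\pi(\mbox{cone}(i(\gamma)))$ depends only on the directions $\gamma-v$, so $\pi(\mbox{cone}(i(\gamma)))=\mbox{cone}(\gamma-v)$ when $v\in\gamma$, and these are precisely the faces of $\PC_v$; faces not containing $v$ project into the support $|\PC_v|$ as well but onto lower-dimensional pieces or onto cells already accounted for, which is exactly why the geometric remark preceding the lemma introduces $\pi(\Sigma)\setminus\pi(\Sigma\setminus\Sigma')$. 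This gives (1). For (2), I would note that $\Sigma\setminus\Sigma^{-1}$ consists of the faces $\gamma$ with $\alpha(\gamma)\ge 0$ somewhere in $\mathrm{st}(\gamma)$; under $\pi$ these map onto the interior faces of $\PC_v$, and the leftover cones $\pi(\Sigma)\setminus\pi(\Sigma\setminus\Sigma^{-1})$ are exactly the faces that only ever sit inside codimension-one faces with $\alpha=-1$, i.e.\ $\PC_v^{-1}$.

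For (3), the key step is the tensor-decomposition argument already carried out in the proof of Lemma~\ref{lem:ProjAssW}, specialized to $W=\mbox{aff}(\widehat{v})$. Since every codimension-one face $\tau$ of $\Sigma$ with $\alpha(\tau)\ge 0$ contains $W$ in its affine span (the hypothesis that $\PC$ is the star of $v$), the generator $l_\tau$ is, up to scalar, a linear form pulled back from $V$, so $J(\tau)=J(\pi(\tau))\otimes_R S$ and hence $S/J(\gamma)\cong (R/J(\pi(\gamma)))\otimes_R S$ compatibly with the cellular differentials. Thus as complexes $\cR/\cJ[\Sigma,\Sigma^{-1}]\cong \C\otimes_R S$ where $\C_i=\bigoplus_{\dim\gamma=i} R/J(\pi(\gamma))$. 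I would then identify $\C$ with $\cR/\cJ[\PC_v,\PC_v^{-1}]$ up to the homological shift: a face $\gamma\in\Sigma$ of dimension $i$ containing $\widehat v$ corresponds, via $\pi$, to a face of $\PC_v$ of dimension $i-1$ (coning drops the dimension by one, since $\mathrm{cone}(\gamma-v)$ has one more dimension than the corresponding cell of $\PC_v$ as a polytopal complex — more precisely, $\dim\mbox{cone}(i(\gamma))=\dim\gamma+1$ over $\PC$, and we are comparing the fan $\Sigma$ against the polytopal complex $\PC_v$), which is exactly the shift $(-1)$; the faces of $\Sigma$ not containing $\widehat v$ contribute terms that either vanish (their $J$ is the unit ideal after intersecting with $R$, killing the summand) or duplicate cells and are absorbed by the passage to $\pi(\Sigma)\setminus\pi(\Sigma\setminus\Sigma^{-1})$. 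Matching the cellular differentials on both sides is then the relative cellular chain complex of $\PC_v$ modulo $\PC_v^{-1}$, tensored up to $S$.

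The main obstacle I anticipate is bookkeeping around parts (1) and (2): making precise the claim that $\pi$ induces a dimension-preserving bijection on the \emph{relevant} faces while the other faces of $\Sigma$ project redundantly, and checking that this redundancy is precisely what the definition $\pi(\Sigma)\setminus\pi(\Sigma\setminus\Sigma^{-1})$ is engineered to discard — in particular that no interior codimension-one face of $\PC_v$ is lost and no spurious one is created. Once that combinatorial dictionary is nailed down, the algebraic content of (3) is a direct application of the flatness/tensor argument from Lemma~\ref{lem:ProjAssW}, and the final sentence about $\Sigma^{-1}=\partial\Sigma$ follows because under the bijection $\partial\Sigma$ corresponds to $\partial\PC_v$ (the boundary faces of the star of $v$ map to the boundary of $\PC_v$), so no separate argument is needed.
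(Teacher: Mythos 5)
Your parts (1) and (3) follow the paper's approach: explicit computation of $\pi(\mbox{cone}(i(\gamma)))=\mbox{cone}(\gamma-v)$ for faces $\gamma\ni v$, and for (3) the tensor-decomposition and flatness argument of Lemma~\ref{lem:ProjAssW} specialized to $W=\mbox{aff}(\widehat{v})$, with the homological shift coming from coning. One side remark on your (3): the claim that faces of $\Sigma$ not containing $\widehat{v}$ contribute summands "whose $J$ is the unit ideal after intersecting with $R$, killing the summand" is not how the argument goes. What the tensor decomposition $J(\gamma)\cong J(\pi(\gamma))\otimes_R S$ actually needs is that $J(\gamma)$ be generated by linear forms in $R$, i.e.\ that $W\subset\mbox{aff}(\tau)$ for all $\tau\in\Sigma^{\ge 0}_n$ containing $\gamma$. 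For a vertex star (with the conventions preceding the lemma, so that the $\lk(v)$ boundary faces carry $\alpha=-1$), every face of $\Sigma\setminus\Sigma^{-1}$ in fact contains $\widehat{v}$; those troublesome faces do not appear in the complex at all, and intersecting an ideal with $R$ plays no role in the argument.

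The genuine gap is in part (2), which you yourself flag as "the main obstacle" but then leave unresolved. Your characterization of $\Sigma\setminus\Sigma^{-1}$ as "faces $\gamma$ with $\alpha(\gamma)\ge 0$ somewhere in $\mathrm{st}(\gamma)$" is not the definition: $\gamma\in\Sigma^{-1}$ iff $\gamma$ is contained in \emph{some} codimension-one face $\tau$ with $\alpha(\tau)=-1$, so a face can be in $\Sigma^{-1}$ and still have codimension-one faces with $\alpha\ge 0$ in its star. More to the point, the real content of (2) is the set-theoretic identity
\[
\pi(\Sigma)\setminus\pi(\Sigma\setminus\Sigma^{-1})=\bigcup_{\substack{\gamma\in\Sigma^{-1}\\ \widehat{v}\subset\gamma}}\pi(\gamma),
\]
which the paper proves in two nontrivial steps: first, any $x$ in the left-hand side must lie in $\pi(\gamma)$ for some $\gamma\in\Sigma^{-1}$ with $\widehat{v}\subset\gamma$ (shown by translating a preimage of $x$ along the ray $\widehat{v}$ and using $\Sigma=\mbox{st}(\widehat{v})$); and second, for such $\gamma$ one has $\pi(\gamma)\cap\pi(\Sigma\setminus\Sigma^{-1})=\emptyset$ (any $y'$ with $\pi(y')=\pi(y)$, $y\in\gamma$, differs from $y$ by a vector in $W$, and since $\widehat{v}\subset\gamma$ this forces $y'\in\gamma$). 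Without the second step in particular, one cannot conclude that the "extra" projections from faces in $\Sigma^{-1}$ really lie outside $\pi(\Sigma\setminus\Sigma^{-1})$, which is exactly the redundancy you say the construction is "engineered to discard" but never verify.
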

\begin{proof}
To show that $\pi(\Sigma)=|\PC_v|$, let us first show that if $\gamma\subset\PC$ is a face of $\PC$ containing $v$, then $\pi(\wgamma)=\mbox{cone}(\gamma-v)$.  Let $\gamma=\mbox{conv}(v,v+q_1,\ldots,v+q_k)$ with coordinates $v=(v_1,\ldots,v_n)$ and $q_i=(q^1_i,\ldots,q^n_i)$ for $i=1,\ldots,k$.  Set $q'_i=(0,q^1_i,\ldots,q^n_i)\in V$ and $v'=(1,v_1,\ldots,v_n)$.  We have $W=\mbox{aff}(\widehat{v})=\mbox{aff}(v')$, $\wsigma=\mbox{cone}(v',v'+q'_1,\ldots,v'+q'_k)$.  Then $\pi(v'+q'_i)=q'_i$ and $\pi(\wgamma)=\mbox{cone}(\mathbf{0},q'_1,\ldots,q'_k)=\mbox{cone}(\gamma-v)$.  Now, suppose that $\gamma\in\PC$ does not contain $v$.  By definition of the star of a vertex, $\gamma\subset\psi$, where $\psi\in\PC$ contains $v$.  Since $\wgamma\subset\widehat{\psi}$, $\pi(\wgamma)\subset\pi(\widehat{\psi})\subset|\PC_v|$.

To show that $\pi(\Sigma)\setminus\pi(\Sigma\setminus\Sigma^{-1})=\PC^{-1}_v$, we claim that
\[
\pi(\Sigma)\setminus\pi(\Sigma\setminus\Sigma^{-1})=\bigcup\limits_{\gamma\in\Sigma^{-1},\widehat{v}\subset\gamma}\pi(\gamma).
\]
To prove this, suppose $x\in \pi(\Sigma)\setminus\pi(\Sigma\setminus\Sigma^{-1})$.  First we show that $x\in\pi(\gamma)$ for some $\gamma\in\Sigma^{-1}$ such that $\widehat{v}\subseteq\gamma$.  Suppose not, and let $x'\in\gamma$ so that $\pi(x')=x$.  Then $x'+w\notin\gamma$ for some positive multiple $w$ of $v'$.  But $x'+w\in\sigma$ for any facet $\sigma$ containing $x'$, since $\Sigma=\mbox{st}(\widehat{v})$.  Hence it follows that $x'+w\in\Sigma\setminus\Sigma^{-1}$.  Since $\pi(x'+w)=\pi(x')=x$, this is a contradiction.  So $x\in\pi(\gamma)$ for some $\gamma\in\Sigma^{-1}$ such that $\widehat{v}\subseteq\gamma$.  We now claim that $\pi(\gamma)\cap\pi(\Sigma\setminus\Sigma^{-1})=\emptyset$.  To see this suppose that $y\in\gamma$ and $\pi(y)=\pi(y')$ for some $y'\notin\Sigma^{-1}$.  Then $y'=y+w$ for some $w\in\mbox{aff}(v')=W$.  Since only nonnegative muliplies of $v'$ intersect nontrivially with $\Sigma$, we have either that $y'=y+w$ for some $w\in\widehat{v}$ or $y=y'+w$ for some $w\in\widehat{v}$.  But $\widehat{v}\subset\gamma$, so we see that in either case, $y'\in\gamma$.  This contradicts the choice of $y'$, since we assumed $\gamma\in\Sigma^{-1}$.

We have
\[
\begin{array}{rl}
\cR/\cJ[\Sigma,\Sigma^{-1}]_{i+1}= & \bigoplus\limits_{\gamma\in\Sigma^{\ge 0}_{i+1}} \dfrac{S}{J(\gamma)}\\
= & \bigoplus\limits_{\gamma\in\pi(\Sigma^{\ge 0}_{i+1})} \dfrac{R}{J(\pi(\gamma))}\otimes_R S\\
= & \cR/\cJ[\PC_v,\PC^{-1}_v]_i \otimes_R S.
\end{array}
\]
The differential is in degree $0$ and so commutes with tensoring, hence
\[
\cR/\cJ[\Sigma,\Sigma^{-1}]\cong \cR/\cJ[\PC_v,\PC_v^{-1}](-1)\otimes_R S.
\]
Finally, we must show that $\partial\pi(\Sigma)=\pi(\Sigma)\setminus\pi(\Sigma\setminus\partial\Sigma)$.  Since interior faces of $\pi(\Sigma)$ are projections of interior faces of $\Sigma$, this is clear.
\end{proof}

The following theorem generalizes~\cite[Lemma~3.1]{Spect} and~\cite[Lemma~2.4]{Chow}, precisely describing the form which associated primes of $\cR/\cJ[\Sigma,\Sigma^{-1}]$ must take.

\begin{thm}\label{thm:assPrimes1}
Let $\Sigma\subset\R^{n+1}$ be a pure, hereditary, $(n+1)$-dimensional fan with smoothness parameters $\alpha$.  For $1\le i\le n$ we have
\begin{enumerate}
\item $\mbox{Ass}_S(H_i(\cR/\cJ[\Sigma,\Sigma^{-1}]))\subset\{I(W)|W\in L_{\Sigma,\Sigma^{-1}},\dim(W)\le i-1\}$
\item If $H_i(\cR[\Sigma_{W,\sigma},\Sigma^{-1}_{W,\sigma}])=0$ for every $W\in L_{\Sigma,\Sigma^{-1}}$ with $\dim W=i-1$, then
\[
\mbox{Ass}_S(H_i(\cR/\cJ[\Sigma,\Sigma^{-1}]))\subset\{I(W)|W\in L_{\Sigma,\Sigma^{-1}},\dim(W)\le i-2\}
\]
\item If $\Sigma$ is simplicial, then 
\[
\mbox{Ass}_S(H_i(\cR/\cJ[\Sigma,\Sigma^{-1}]))\subset\{I(\gamma)|\gamma\in\Sigma,\mbox{aff}(\gamma)\in L_{\Sigma,\Sigma^{-1}}, \dim(\gamma)\le i-2\}
\]
\end{enumerate}
\end{thm}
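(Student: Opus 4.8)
The proof will proceed by localization, reducing each statement to an analysis of the lattice fans $\Sigma_{W,\sigma}$ via Lemma~\ref{lem:Localize}, and then applying the flatness/projection argument of Lemma~\ref{lem:ProjAssW} together with the topological computations of Proposition~\ref{prop:LowHom} and Proposition~\ref{prop:GHom}. First I would observe that by Proposition~\ref{prop:AssFacts}(1), to show $I(W)\in\mathrm{Ass}_S(H_i(\cR/\cJ[\Sigma,\Sigma^{-1}]))$ forces $\dim W\le i-1$, it suffices to work after localizing at a prime $P$ and use Lemma~\ref{lem:Localize} to identify $\cR/\cJ[\Sigma,\Sigma^{-1}]_P$ with $\cR/\cJ[\Sigma_W,\Sigma'_W]_P$ where $W=\max\{I(V)\mid I(V)\subseteq P\}$. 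Since associated primes split over direct sums (Proposition~\ref{prop:AssFacts}(2)), I can work one lattice-fan component $\Sigma_{W,\sigma}$ at a time. By Lemma~\ref{lem:ProjAssW}, every associated prime of $H_i(\cR/\cJ[\Sigma_{W,\sigma},\Sigma^{-1}_{W,\sigma}])$ is contained in $I(W)$; the crux is then to rule out $P=I(W)$ itself when $\dim W > i-1$, i.e.\ to show that $H_i$ of the localized complex vanishes, or at least has depth at least one, in the relevant range.

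**Step (1).** The key structural input is that in the complex $\C$ of $\R[V]$-modules produced by Lemma~\ref{lem:ProjAssW} (with $V$ complementary to $W$, $\dim V = n+1-d$ where $d=\dim W$), the module $\C_j$ vanishes for $j\le d$ because every face $\gamma$ with $W\subseteq \mathrm{aff}(\gamma)$ has $\dim\gamma\ge d$; in fact $\C_j = 0$ for $j < d$ and $\C_d$ is a free module corresponding to the $d$-faces whose affine span is exactly $W$. Hence the homology $H_j(\C)$ vanishes for $j\le d-1$ automatically, so $H_j(\cR/\cJ[\Sigma_{W,\sigma},\Sigma^{-1}_{W,\sigma}]) = H_j(\C)\otimes_{\R[V]} S = 0$ for $j\le d-1$. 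Combined with Proposition~\ref{prop:AssFacts}(3), this means $I(W)$ can be associated to $H_i$ only if $i\ge d$, i.e.\ $d\le i$. To get the sharper bound $\dim W\le i-1$ claimed in (1), I need to show $H_d(\C)$ has no embedded associated prime equal to the maximal ideal of $\R[V]$ — equivalently $I(W)\notin\mathrm{Ass}(H_d(\cR/\cJ[\Sigma_{W,\sigma},\Sigma^{-1}_{W,\sigma}]))$. For this I use the tautological short exact sequence $0\to\cJ\to\cR\to\cR/\cJ\to 0$ and the associated long exact sequence in homology: $H_d(\cR/\cJ[\Sigma_{W,\sigma},\Sigma^{-1}_{W,\sigma}])$ sits between $H_d(\cR[\Sigma_{W,\sigma},\Sigma^{-1}_{W,\sigma}])$ and $H_{d-1}(\cJ[\Sigma_{W,\sigma},\Sigma^{-1}_{W,\sigma}])$. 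By Proposition~\ref{prop:GHom} (applicable since $W$ is contained in the affine span of every codimension-one face of $\Sigma_{W,\sigma}$ after replacing $W$ by the minimal flat), $H_d(\cR[\Sigma_{W,\sigma},\Sigma^{-1}_{W,\sigma}])$ is either $0$ or $S$ (a free module, no embedded primes), and $\cJ[\Sigma_{W,\sigma},\Sigma^{-1}_{W,\sigma}]$ in low degrees consists of sums of ideals $J(\gamma)$ which are nonzero torsion-free $S$-modules, so $H_{d-1}(\cJ)$ embeds in a torsion-free module and cannot have $I(W)$ (which properly contains the annihilator of a torsion element only if... ) — more carefully, I'd argue $\mathrm{depth}$ considerations via the exact sequence show $I(W)$ cannot be associated to $H_d(\cR/\cJ)$, giving $\dim W\le i-1$.

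**Steps (2) and (3).** Statement (2) is an immediate refinement: if $H_i(\cR[\Sigma_{W,\sigma},\Sigma^{-1}_{W,\sigma}])=0$ for all $W$ with $\dim W = i-1$, then for such $W$ the long exact sequence forces $H_i(\cR/\cJ[\Sigma_{W,\sigma},\Sigma^{-1}_{W,\sigma}])$ to be a quotient of $H_{i-1}(\cJ[\Sigma_{W,\sigma},\Sigma^{-1}_{W,\sigma}])$; since $\C_j$ (and hence $\cJ$ in degree $j$) vanishes for $j < \dim W = i-1$, the homology $H_{i-1}(\cJ[\Sigma_{W,\sigma},\Sigma^{-1}_{W,\sigma}])$ is the kernel of a map out of $\cJ_{i-1}$, which is a submodule of a torsion-free module, and a more careful depth count (using that all the $J(\tau)$ are principal, generated by powers of linear forms, and $\Sigma_{W,\sigma}$ restricted to the span of $W$ is just $W$ itself) shows $I(W)\notin\mathrm{Ass}(H_i)$; so the associated primes of $H_i$ come only from flats of dimension $\le i-2$. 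For statement (3), in the simplicial case Lemma~\ref{lem:FanStar} identifies $\Sigma_{W,\sigma}$ with $\mathrm{st}_\Sigma(\gamma)$ for a face $\gamma$ with $W\subseteq\mathrm{aff}(\gamma)$; the relevant flats $W=I(V)$ that can occur as associated primes are then exactly those of the form $\mathrm{aff}(\gamma)$ for $\gamma\in\Sigma$ (one replaces $W$ by the minimal flat as in the remark after Definition~\ref{def:LatticePair}, and in the simplicial star the minimal such flat is the affine span of the face $\gamma$), combined with the dimension bound $\dim\gamma\le i-2$ inherited from (2) plus the observation that for simplicial stars $H_{d+1}(\cR[\mathrm{st}(\gamma),\partial\,\mathrm{st}(\gamma)])$ vanishes when $\dim W=i-1$ (the dual graph $G_W$ is connected and acyclic-enough, or rather the link computation gives a disk), so the hypothesis of (2) is automatic.

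**Main obstacle.** The hard part is the depth/embedded-prime argument at the "boundary" homological degree — showing that $I(W)$ itself, as opposed to some smaller flat, is never an associated prime of $H_{d}(\cR/\cJ)$ when $d = \dim W$, and similarly at degree $i$ under the hypothesis of (2). The long exact sequence gives an extension of a submodule of a torsion-free module by (a quotient of) a free module or zero, and one must carefully track which such extensions can acquire a new associated prime equal to the maximal ideal of $\R[V]$. I expect this requires either an explicit analysis of the cellular chain complex of $G_W(\Sigma_{W,\sigma},\Sigma^{-1}_{W,\sigma})$ tensored with the ideals $J(\tau)$ — showing this complex, in the relevant degree, has homology whose $I(W)$-depth is positive — or an appeal to the fact (implicit in Proposition~\ref{prop:GHom}) that the map $\delta_{d+2}$ is the cellular boundary of a connected graph, so its cokernel contribution is free and its kernel contribution, being a first syzygy of an ideal of linear forms, has depth $\ge 2 > 1$ after localizing at $I(W)$ (which has height $\le n+1-d$, leaving room). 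I would need to verify this depth inequality holds in exactly the stated range of $i$.
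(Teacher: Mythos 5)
Your outline follows the same architecture as the paper's proof (localization via Lemma~\ref{lem:Localize}, reducing to lattice fans component by component, invoking Lemma~\ref{lem:ProjAssW} to force $P=I(W)$, and then attacking the long exact sequence of $0\to\cJ\to\cR\to\cR/\cJ\to 0$). However, the argument breaks down at the crux of both (1) and (2), and at those points you substitute a vague depth heuristic for what should be (and is, in the paper) a clean homological computation.

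In part (1), you misread Proposition~\ref{prop:GHom}: you write that $H_d(\cR[\Sigma_{W,\sigma},\Sigma^{-1}_{W,\sigma}])$ ``is either $0$ or $S$,'' but both cases of that proposition give $H_d(\cR[\Sigma_{W,\sigma},\Sigma^{-1}_{W,\sigma}])=0$ unconditionally (it is $H_{d+1}$ that can be $0$ or $S$). With this corrected, the finish is immediate and needs no torsion-freeness or depth considerations at all: the complex $\cJ[\Sigma_{W,\sigma},\Sigma^{-1}_{W,\sigma}]$ is identically zero in homological degrees below $d$, hence $H_{d-1}(\cJ)=0$, and from the segment $H_d(\cR)\to H_d(\cR/\cJ)\to H_{d-1}(\cJ)$ of the long exact sequence one reads off $H_d(\cR/\cJ)=0$, giving $i\ge d+1$.

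In part (2), you assert that ``$H_{i-1}(\cJ[\Sigma_{W,\sigma},\Sigma^{-1}_{W,\sigma}])$ is the kernel of a map out of $\cJ_{i-1}$,'' but since $\cJ_{i-2}=0$ every chain in degree $i-1$ is a cycle, so $H_{i-1}(\cJ)$ is in fact a \emph{cokernel}: the cokernel of $\delta_i\colon\cJ_i\to\cJ_{i-1}$. Cokernels of submodules of torsion-free modules have no a priori depth bound, so the depth argument you gesture at cannot work as stated. What is actually needed — and what the paper supplies — is a case analysis showing this cokernel is zero: either $(\Sigma^{\ge 0}_{W,\sigma})_{\dim W}=\emptyset$ and $\cJ_{\dim W}=0$ vacuously, or $\Sigma_{W,\sigma}$ is the star of an interior face $\gamma$ with $\mathrm{aff}(\gamma)=W$, in which case the map $\bigoplus_{\gamma\in\psi}J(\psi)\to J(\gamma)$ is surjective because $J(\gamma)$ is by definition the sum $\sum_{\gamma\in\tau}J(\tau)$ and each $J(\tau)$ factors through some $J(\psi)$. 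This surjectivity step is the real content of (2) and is absent from your proposal. Part (3) is handled correctly in outline (Lemma~\ref{lem:FanStar}, replace $W$ by $\mathrm{aff}(\gamma)$, note that $\gamma\in\Sigma_d\setminus\Sigma'_d$ so Proposition~\ref{prop:GHom}(2) gives $H_{d+1}(\cR)=0$ automatically), but since it rests on the mechanism of (2) it inherits that gap.
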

\begin{proof}
Assume $H_i(\cR/\cJ[\Sigma,\Sigma^{-1}])\neq 0$, so $\mbox{Ass}_S(H_i(\cR/\cJ[\Sigma,\Sigma^{-1}]))\neq\emptyset$.  Let $P\in \mbox{Ass}_S(H_i(\cR/\cJ[\Sigma,\Sigma^{-1}]))$ and set $W=\max_{V\in L_{\Sigma,\Sigma^{-1}}}\{I(V)|I(V)\subset P\}$.  If $W=\R^{n+1}$, so that $I(W)=0$, then $H_i(\cR/\cJ[\Sigma,\Sigma^{-1}])_P=0$ for $1\le i \le n$.  So if $P\in \mbox{Ass}_S(H_i(\cR/\cJ[\Sigma,\Sigma^{-1}]))$, it must contain at least one ideal of the form $I(\mbox{aff}(\tau))$, $\tau\in\Sigma^{\ge 0}_n$, and $W$ must be a proper subspace of $\R^{n+1}$.  Then $P\in \mbox{Ass}_S(H_i(\cR/\cJ[\Sigma,\Sigma^{-1}]))$
\[
\begin{array}{rl}
\iff & PS_P\in\mbox{Ass}_{S_P}(H_i(\cR/\cJ[\Sigma,\Sigma^{-1}])_P)\\
\iff & PS_P\in\mbox{Ass}_{S_P}(H_i(\cR/\cJ[\Sigma_{W,\sigma},\Sigma^{-1}_{W,\sigma}])_P)\mbox{ for some } \sigma\in\Sigma_{n+1}\\
\iff & P\in \mbox{Ass}_S(H_i(\cR/\cJ[\Sigma_{W,\sigma},\Sigma^{-1}_{W,\sigma}]))\mbox{ for some } \sigma\in\Sigma_{n+1}.
\end{array}
\]
The first and last equivalences follow from Proposition~\ref{prop:AssFacts} part (1).  The second equivalence follows from Lemma~\ref{lem:Localize} and Lemma~\ref{prop:AssFacts} part (2).

Now pick $\sigma$ so that $P\in \mbox{Ass}_S(H_i(\cR/\cJ[\Sigma_{W,\sigma},\Sigma^{-1}_{W,\sigma}]))$.  It is clear that $W\subset \bigcap\limits_{\tau\in (\Sigma^{\ge 0}_{W,\sigma})_n} \mbox{aff}(\tau)$.  Hence by Lemma~\ref{lem:ProjAssW}, $P\subseteq I(W)$.  But $I(W)\subseteq P$ by construction, so $P=I(W)$.

By Proposition~\ref{prop:GHom}, $H_k(\cR[\Sigma_{W,\sigma},\Sigma^{-1}_{W,\sigma}])=0$ for $k\le \dim(W)$.  By the long exact sequence in homology corresponding to the short exact sequence
\[
0\rightarrow\cJ[\Sigma_{W,\sigma},\Sigma^{-1}_{W,\sigma}]\rightarrow \cR[\Sigma_{W,\sigma},\Sigma^{-1}_{W,\sigma}] \rightarrow \cR/\cJ[\Sigma_{W,\sigma},\Sigma^{-1}_{W,\sigma}]\rightarrow 0,
\]
we obtain that $H_{\dim(W)}(\cR/\cJ[\Sigma_{W,\sigma},\Sigma^{-1}_{W,\sigma}])=0$ as well.  Hence $i\ge \dim(W)+1$ if $I(W)\in\mbox{Ass}(H_i(\cR/\cJ[\Sigma,\Sigma^{-1}])),$ which gives the condition on dimension.  This concludes the proof of $(1)$.

If in addition $H_{\dim(W)+1}(\cR[\Sigma_{W,\sigma}),\Sigma^{-1}_{W,\sigma}])=0$, then the long exact sequence in homology yields
\[
H_{\dim(W)+1}(\cR/\cJ[\Sigma_{W,\sigma},\Sigma^{-1}_{W,\sigma}])\cong H_{\dim(W)}(\cJ[\Sigma_{W,\sigma},\Sigma^{-1}_{W,\sigma}]).
\]
If $\left(\Sigma^{\ge 0}_{W,\sigma}\right)_{\dim W}=\emptyset$ then $\cJ[\Sigma_{W,\sigma},\Sigma^{-1}_{W,\sigma}]_{\dim(W)}=0$ automatically.  If $\left(\Sigma^{\ge 0}_{W,\sigma}\right)_{\dim W}\neq\emptyset$ then we saw in the proof of Proposition~\ref{prop:GHom} that $\Sigma_{W,\sigma}$ must be the star of a face $\gamma$ with $\mbox{aff}(\gamma)=W$.  In this case $H_{\dim(W)}(\cJ[\Sigma_{W,\sigma},\Sigma^{-1}_{W,\sigma}])$ is the cokernel of the map
\[
\bigoplus_{\psi\in\left(\Sigma^{\ge 0}_{W,\sigma}\right)_{\dim(W)+1}} J(\psi) \rightarrow J(\gamma).
\]
Since $\gamma$ is an interior face, the sum on the left hand side runs over all ideals of faces $\psi\in \left(\Sigma_{W,\sigma}\right)_{\dim(W)+1}$ so that $\gamma\in\psi$.  By definition,
\[
\sum\limits_{\substack{\gamma\in\psi\\ \psi\in\left(\Sigma^{\ge 0}_{W,\sigma}\right)_{\dim(W)+1}}} J(\psi)=\sum\limits_{\substack{\gamma\in\tau\\ \tau\in\left(\Sigma^{\ge 0}_{W,\sigma}\right)_n}} J(\tau)=J(\gamma),
\]
so the map above is surjective and $H_{\dim(W)}(\cJ[\Sigma_{W,\sigma},\Sigma^{-1}_{W,\sigma}])=0$, hence 

\noindent $H_{\dim(W)+1}(\cR/\cJ[\Sigma_{W,\sigma},\Sigma^{-1}_{W,\sigma}])=0$ as well.  This completes the proof of $(2)$.

Now suppose $\Sigma$ is simplicial.  Then $\Sigma_{W,\sigma}=\mbox{st}_\Sigma(\gamma)$ for some $\gamma$ with $W\subset\mbox{aff}(\gamma)$, by Proposition~\ref{lem:FanStar}.  Replacing $W$ with $\mbox{aff}(\gamma)$ if necessary, we may assume that $W=\mbox{aff}(\gamma)$, hence $P=I(W)=I(\gamma)$.  We also have that $H_{\dim(\gamma)+1}(\cR[\Sigma_{W,\sigma},\Sigma^{-1}_{W,\sigma}])=0$ by part $(2)$ of Proposition~\ref{prop:GHom}.  As in the proof of $(2)$, this yields
\[
H_{\dim(W)+1}(\cR/\cJ[\Sigma_{W,\sigma},\Sigma^{-1}_{W,\sigma}])=0,
\] 
so the condition on dimension follows.  This concludes the proof of $(3)$.
\end{proof}

\begin{remark}
Originally Billera defined the complex $\cR/\cJ'[\Sigma,\partial\Sigma]$ with uniform smoothness $r$ using the ideals $J'(\gamma)=I(\gamma)^{r+1}$~\cite{Homology}.  The proof of Theorem~\ref{thm:assPrimes1} shows precisely where using these ideals leads to associated primes of higher dimension: namely, the map
\[
\bigoplus\limits_{\substack{\psi\in\Sigma^{\ge 0}_{\dim(W)}\\ \gamma\in\psi}} J'(\psi)\rightarrow J'(\gamma)
\]
is not necessarily surjective, so while (1) would hold for this complex, (2) and (3) would not.  The price for using the ideals $J'(\gamma)$, which are simpler to understand, is more complicated homology modules.
\end{remark}

In Example~\ref{ex:3DimMorganScot2} we will see how to couple Theorem~\ref{thm:assPrimes1} with Lemma~\ref{lem:StarProjection} to obtain some interesting relationships between associated primes and the existence of `unexpected' splines of certain degrees.

We can be even more precise about associated primes $I(W)$ of $H_i(\cR/\cJ[\Sigma])$ with $\dim(W)=i-1$.  This is a slight generalization of~\cite[Theorem~2.6]{Chow}.

\begin{thm}\label{thm:assPrimes2}
Let $\Sigma\subset\R^{n+1}$ be a pure, hereditary, $(n+1)$-dimensional fan with smoothness parameters $\alpha$.  Let $W\in L_{\Sigma,\Sigma^{-1}}$ be a flat of dimension $d-1$, where $2\le d\le n+1$.  Then $I(W)$ is associated to $H_d(\cR/\cJ[\Sigma_{W,\sigma},\Sigma^{-1}_{W,\sigma}])$ iff one of the following equivalent conditions hold.
\begin{enumerate}
\item $H_d(\cR[\Sigma_{W,\sigma},\Sigma^{-1}_{W,\sigma}])\neq 0$
\item $H_d(\cR[\Sigma_{W,\sigma},\Sigma^{-1}_{W,\sigma}])=S$
\item $G_W(\Sigma_{W,\sigma},\Sigma^{-1}_{W,\sigma})$ has no $v_b$ vertex and $\Sigma_{W,\sigma}$ is not the star of a face.
\end{enumerate}

Moreover, we have
\[
H_d(\cR/\cJ[\Sigma_W,\Sigma^{-1}_W])=\bigoplus\limits_{\substack{\sigma\in\Gamma_W\\ H_d(\cR[\Sigma_{W,\sigma},\Sigma_{W,\sigma}^{-1}])\neq 0}} \left(\dfrac{S}{\sum_{\tau\in(\Sigma^{\ge 0}_{W,\sigma})_n} J(\tau)}\right),
\]
where $\Gamma_W$ runs across a set of representatives for the equivalence classes $[\sigma]_W$.
\end{thm}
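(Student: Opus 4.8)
The plan is to work locally at the prime $I(W)$ and leverage the long exact sequence associated to the tautological short exact sequence
\[
0\rightarrow \cJ[\Sigma_{W,\sigma},\Sigma^{-1}_{W,\sigma}] \rightarrow \cR[\Sigma_{W,\sigma},\Sigma^{-1}_{W,\sigma}] \rightarrow \cR/\cJ[\Sigma_{W,\sigma},\Sigma^{-1}_{W,\sigma}] \rightarrow 0.
\]
First I would recall, from the proof of Proposition~\ref{prop:GHom} (and Proposition~\ref{prop:LowHom}), that $H_k(\cR[\Sigma_{W,\sigma},\Sigma^{-1}_{W,\sigma}])=0$ for $k\le d-1=\dim W$, and that by definition $\cR[\Sigma_{W,\sigma},\Sigma^{-1}_{W,\sigma}]_i = 0$ for $i<d-1$ unless $\Sigma_{W,\sigma}$ contains a $(d-1)$-face not in $\Sigma^{-1}_{W,\sigma}$, in which case it is the star of a face $\gamma$ with $\mbox{aff}(\gamma)=W$. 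Splitting into these two cases is the organizing principle.

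Next I would establish the equivalence of (1), (2), (3). The equivalence (1)$\iff$(2) is immediate from Proposition~\ref{prop:GHom}(1), since $H_d(\cR[\Sigma_{W,\sigma},\Sigma^{-1}_{W,\sigma}])$ is either $0$ or $S$ (with $d=\dim W + 1$), the value being controlled by whether the distinguished vertex $v_b$ is present in $G_W(\Sigma_{W,\sigma},\Sigma^{-1}_{W,\sigma})$; and Proposition~\ref{prop:GHom}(2) says that if $\Sigma_{W,\sigma}$ is the star of a face then $H_d$ vanishes. Conversely, if $\Sigma_{W,\sigma}$ is \emph{not} the star of a face, then $(\Sigma^{\ge 0}_{W,\sigma})_{d-1}=\emptyset$, so we are in case (1) of Proposition~\ref{prop:GHom}, and $H_d = S$ precisely when $v_b\notin G_W$. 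This gives (2)$\iff$(3). For the main biconditional, I would argue: if $H_d(\cR[\Sigma_{W,\sigma},\Sigma^{-1}_{W,\sigma}])=S$, then since $\Sigma_{W,\sigma}$ is not the star of a face we have $\cR[\Sigma_{W,\sigma},\Sigma^{-1}_{W,\sigma}]_{d-1}=0$, so the long exact sequence gives a surjection $H_d(\cR/\cJ) \twoheadrightarrow H_{d-1}(\cJ)$ and an inclusion $H_d(\cR)=S \hookrightarrow H_d(\cR/\cJ)$ whose image is $(\text{image of }S)$ — actually I want the cleaner statement that $H_d(\cR/\cJ[\Sigma_{W,\sigma},\Sigma^{-1}_{W,\sigma}])$ surjects onto $S$ with kernel supported at smaller flats, or better: localize at $I(W)$ and use that $S/\sum J(\tau)$ has $I(W)$ as its unique minimal (hence associated) prime. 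Concretely, $H_d(\cR/\cJ[\Sigma_{W,\sigma},\Sigma^{-1}_{W,\sigma}])$ is the kernel of $\cR/\cJ_d = S^{f} \to \bigoplus_{\tau\in(\Sigma^{\ge 0}_{W,\sigma})_{d-1}} S/J(\tau)$; localizing at $I(W)$ kills all $S/J(\tau)$ with $W\not\subset\mbox{aff}(\tau)$, and a diagram chase identifies the localization with $(S/\sum J(\tau))_{I(W)}$, which is nonzero with $I(W)$ associated. Conversely, if $H_d(\cR[\Sigma_{W,\sigma},\Sigma^{-1}_{W,\sigma}])=0$, then either $\Sigma_{W,\sigma}$ is the star of a face and the argument in the proof of Theorem~\ref{thm:assPrimes1}(2) shows $H_d(\cR/\cJ[\Sigma_{W,\sigma},\Sigma^{-1}_{W,\sigma}])=0$, or $v_b\in G_W$ and the same conclusion follows from the long exact sequence together with $H_{d-1}(\cJ)=0$ (the relevant surjectivity of $\bigoplus J(\psi)\to J(\gamma)$).

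For the final displayed formula I would use that $\cR/\cJ[\Sigma_W,\Sigma^{-1}_W] = \bigoplus_{\sigma\in\Gamma_W}\cR/\cJ[\Sigma_{W,\sigma},\Sigma^{-1}_{W,\sigma}]$ by definition, so $H_d$ splits as the direct sum of $H_d(\cR/\cJ[\Sigma_{W,\sigma},\Sigma^{-1}_{W,\sigma}])$ over $\sigma\in\Gamma_W$. By the case analysis, each summand with $H_d(\cR[\Sigma_{W,\sigma},\Sigma^{-1}_{W,\sigma}])=0$ contributes nothing, while each summand with $H_d(\cR[\Sigma_{W,\sigma},\Sigma^{-1}_{W,\sigma}])\neq 0$ must be $S/\sum_{\tau\in(\Sigma^{\ge 0}_{W,\sigma})_n}J(\tau)$. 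To pin down this last identification I would run the long exact sequence one more time: since $\cR[\Sigma_{W,\sigma},\Sigma^{-1}_{W,\sigma}]$ is concentrated in degrees $\ge d$ with $H_d = S$, and $\cR[\Sigma_{W,\sigma},\Sigma^{-1}_{W,\sigma}]_d = S^{|G_W|}$ (no $v_b$, no $(d-1)$-faces), the map $\cR/\cJ_{d+1}\to \cR/\cJ_d$ has image $\sum J(\tau)$ inside the copy of $S$ cut out as $\ker(\phi_W)$, giving $H_d(\cR/\cJ[\Sigma_{W,\sigma},\Sigma^{-1}_{W,\sigma}]) = S/\sum_{\tau}J(\tau)$.

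The main obstacle I expect is the bookkeeping in the ``not the star of a face'' case: carefully identifying $H_d(\cR/\cJ[\Sigma_{W,\sigma},\Sigma^{-1}_{W,\sigma}])$ with $S/\sum_{\tau\in(\Sigma^{\ge 0}_{W,\sigma})_n}J(\tau)$ requires matching the cokernel of $\delta_{d+1}$ (restricted appropriately) against the image of the single copy of $S$ coming from $H_d(\cR)$, i.e. checking that the boundary map $\delta_{d+1}$ lands in — and surjects onto — the submodule $\sum J(\tau)$ of that distinguished $S$. This is exactly the kind of argument carried out in~\cite[Theorem~2.6]{Chow} in the uniform simplicial setting; the only new feature here is allowing $\Sigma^{-1}$ to be an arbitrary subfan of $\partial\Sigma$ (hence the bookkeeping with $(\Sigma^{\ge 0}_{W,\sigma})_n$ rather than all codimension one faces) and allowing non-simplicial $\Sigma_{W,\sigma}$, which is handled uniformly by Proposition~\ref{prop:GHom} via the graph $G_W$.
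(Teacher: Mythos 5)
Your overall strategy matches the paper's: equivalence of (1)--(3) follows from Proposition~\ref{prop:GHom}, the identification $H_d(\cR/\cJ[\Sigma_{W,\sigma},\Sigma^{-1}_{W,\sigma}])=S/\sum_\tau J(\tau)$ comes from the long exact sequence of the tautological short exact sequence, the converse (when $H_d(\cR)=0$) invokes the argument already carried out in Theorem~\ref{thm:assPrimes1}(2), and the ``moreover'' formula then drops out of the direct sum decomposition over $\Gamma_W$. That is exactly what the paper does, and it is the right route.

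However, the middle of your argument is garbled in ways that suggest you have not actually carried out the central computation. You write that $H_d(\cR/\cJ[\Sigma_{W,\sigma},\Sigma^{-1}_{W,\sigma}])$ is ``the \emph{kernel} of $\cR/\cJ_d=S^f\to\bigoplus_{\tau}S/J(\tau)$'' --- but $\cR/\cJ_d$ is $\bigoplus_{\gamma}S/J(\gamma)$, not a free module, and since (in the non-star case) the complex is zero below degree $d$, the homology $H_d$ is a \emph{cokernel} of $\delta_{d+1}$, not a kernel. Likewise, ``the copy of $S$ cut out as $\ker(\phi_W)$'' should be $\operatorname{coker}(\phi_W)$: the $S$ produced by Proposition~\ref{prop:GHom}(1) is $H_0(G_W(\Sigma_{W,\sigma},\Sigma^{-1}_{W,\sigma});S)=\operatorname{coker}(\phi_W)$, whereas $\ker(\phi_W)\subset\cR_{d+1}$ is the module of cycles of the graph. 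The clean way to get the identification --- which is what the paper is implicitly doing --- is to note that, in the non-star case, $\cR_{d-1}=\cJ_{d-1}=(\cR/\cJ)_{d-1}=0$, so the long exact sequence terminates as $H_d(\cJ)\to H_d(\cR)=S\to H_d(\cR/\cJ)\to 0$; the left map is induced by $\cJ_d\hookrightarrow\cR_d$ composed with the augmentation $\cR_d\twoheadrightarrow S$, and its image is $\sum_\gamma J(\gamma)=\sum_\tau J(\tau)$. You should also be careful that in the $v_b$ case of the converse, the vanishing of $H_{d-1}(\cJ)$ is because $\cJ_{d-1}=0$ (there are no $(d-1)$-faces), not because of the surjectivity of $\bigoplus J(\psi)\to J(\gamma)$; that surjectivity argument is only what one needs in the star case.
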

\begin{proof}
The equivalence of the three conditions is a consequence of Proposition~\ref{prop:GHom}.  Assuming any one of these, the long exact sequence coming from
\[
0\rightarrow\cJ[\Sigma_{W,\sigma},\Sigma^{-1}_{W,\sigma}]\rightarrow \cR[\Sigma_{W,\sigma},\Sigma^{-1}_{W,\sigma}] \rightarrow \cR/\cJ[\Sigma_{W,\sigma},\Sigma^{-1}_{W,\sigma}]\rightarrow 0
\]
yields that
\[
H_d(\cR/\cJ[\Sigma_{W,\sigma},\Sigma^{-1}_{W,\sigma}])=\dfrac{S}{\sum_{\tau\in(\Sigma^{\ge 0}_{W,\sigma})_n} J(\tau)}.
\]
Now the result follows from
\[
H_d(\cR/\cJ[\Sigma_W,\Sigma^{-1}_W])=\bigoplus_{\sigma\in\Gamma_W} H_d(\cR/\cJ[\Sigma_{W,\sigma},\Sigma^{-1}_{W,\sigma}]).
\]
\end{proof}

\begin{cor}\label{cor:dimHi}
Let $\Sigma\subset\R^{n+1}$ be a pure, hereditary, $(n+1)$-dimensional fan with smoothness parameters $\alpha$.  Then for $2\le i \le n$, $\dim H_i(\cR/\cJ[\Sigma,\Sigma^{-1}])\le i-1$ with equality iff $H_i(\cR[\Sigma_{W,\sigma},\Sigma^{-1}_{W,\sigma}])\neq 0$ for some $i-1$ dimensional flat $W\in L_{\Sigma,\Sigma^{-1}}$ and some $\sigma\in\Sigma_{n+1}$.
\end{cor}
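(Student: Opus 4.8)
The plan is to read the statement off from Theorems~\ref{thm:assPrimes1} and~\ref{thm:assPrimes2}, together with the standard fact that for a finitely generated module $M$ over the Noetherian ring $S$ one has $\dim M=\max\{\dim S/P\mid P\in\mbox{Ass}_S(M)\}$ (equivalently, the dimension of $M$ is detected by the minimal primes of its support, which are associated primes). So first I would record that each $H_i(\cR/\cJ[\Sigma,\Sigma^{-1}])$ is a finitely generated $S$-module, being a subquotient of a finite direct sum of cyclic modules $S/J(\gamma)$.

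Next I would establish the bound. By Theorem~\ref{thm:assPrimes1}(1), every $P\in\mbox{Ass}_S(H_i(\cR/\cJ[\Sigma,\Sigma^{-1}]))$ has the form $I(W)$ for a flat $W\in L_{\Sigma,\Sigma^{-1}}$ with $\dim W\le i-1$; since such a $W$ is a linear subspace of $\R^{n+1}$ and $I(W)$ is generated by $n+1-\dim W$ independent linear forms, $\dim S/I(W)=\dim W\le i-1$. Combined with the displayed formula for $\dim M$, this yields $\dim H_i(\cR/\cJ[\Sigma,\Sigma^{-1}])\le i-1$, and shows moreover that equality holds if and only if $I(W)$ is an associated prime of $H_i(\cR/\cJ[\Sigma,\Sigma^{-1}])$ for some flat $W$ of dimension exactly $i-1$.

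It then remains to identify when such an $I(W)$ is associated. Fix $W\in L_{\Sigma,\Sigma^{-1}}$ with $\dim W=i-1$. Since $W$ already lies in the lattice $L_{\Sigma,\Sigma^{-1}}$, one has $\max_{V\in L_{\Sigma,\Sigma^{-1}}}\{I(V)\mid I(V)\subset I(W)\}=W$, so the chain of equivalences in the proof of Theorem~\ref{thm:assPrimes1}---which uses only Proposition~\ref{prop:AssFacts}(1), Lemma~\ref{lem:Localize}, and Proposition~\ref{prop:AssFacts}(2)---applies verbatim with $P=I(W)$ and gives
\[
I(W)\in\mbox{Ass}_S(H_i(\cR/\cJ[\Sigma,\Sigma^{-1}]))\iff I(W)\in\mbox{Ass}_S(H_i(\cR/\cJ[\Sigma_{W,\sigma},\Sigma^{-1}_{W,\sigma}]))\ \mbox{for some }\sigma\in\Sigma_{n+1}.
\]
Finally I would invoke Theorem~\ref{thm:assPrimes2} with $d=i$ (the hypothesis $2\le d\le n+1$ holds since $2\le i\le n$, and $\dim W=d-1$), by which the right-hand condition holds for a given $\sigma$ precisely when $H_i(\cR[\Sigma_{W,\sigma},\Sigma^{-1}_{W,\sigma}])\neq 0$. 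Chaining these equivalences produces exactly the asserted criterion for equality.

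Since the substance is contained in the two theorems and this corollary is essentially a bookkeeping exercise, I do not expect a deep obstacle; the points that require care are (i) verifying that $\dim H_i(\cR/\cJ[\Sigma,\Sigma^{-1}])$ is genuinely computed by associated primes, which is where finite generation over the Noetherian ring $S$ is used, and (ii) checking that the ``maximal flat'' selection in Lemma~\ref{lem:Localize} evaluates to $W$ itself when one starts from the prime $P=I(W)$ rather than an arbitrary prime, so that the localization argument from Theorem~\ref{thm:assPrimes1} can be reused unchanged.
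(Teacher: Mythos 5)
Your proposal is correct and follows essentially the same route as the paper: the bound comes from Theorem~\ref{thm:assPrimes1}(1) via $\dim M=\max\{\dim S/P\mid P\in\mbox{Ass}_S(M)\}$, and the equality criterion is obtained by combining the localization equivalences underlying Theorems~\ref{thm:assPrimes1} and~\ref{thm:assPrimes2}. You are somewhat more explicit than the paper (which simply cites the two theorems for the two directions), and your care about finite generation and about why $\max_{V}\{I(V)\mid I(V)\subseteq I(W)\}$ returns $W$ itself is exactly the right bookkeeping.
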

\begin{proof}
The statement $\dim H_i(\cR/\cJ[\Sigma,\Sigma^{-1}])\le i-1$ is a consequence of Theorem~\ref{thm:assPrimes1} part (1).  The backward implication for equality is Theorem~\ref{thm:assPrimes1} part (2), while the forward implication is provided by Theorem~\ref{thm:assPrimes2}.
\end{proof}

\section{Examples}

From Corollary~\ref{cor:dimHi} we see that if $\dim H_i(\cR/\cJ[\Sigma,\Sigma^{-1}])= i-1$ then there is some nontrivial topology of a lattice fan $\Sigma_{W,\sigma}$ relative to $\Sigma^{-1}_{W,\sigma}$ for a flat $W\in L_{\Sigma,\Sigma^{-1}}$ with $\dim W=i-1$.  This behavior is far from generic, but it is not so difficult to construct examples manifesting such nontrivial topology.  In the following example we provide two fans which illustrate such nongeneric behavior.

\begin{exm}\label{ex:3DSymmetric}
The two polytopal complexes $\PC_1$, $\PC_2$ in Figure~\ref{fig:Hyper} (shown without boundary faces to clarify the inner structure) are both formed by placing a polytope inside of a scaled version of itself and connecting vertices as shown.  In Figure~\ref{fig:HyperTet}, we start with a tetrahedron which is the convex hull of $(0, 0, 8), (-4, -6, -3),$ $(-4, 6, -3),(6, 0, -3)$, then scale it up by a factor of $4$ and place the smaller one inside.  In Figure~\ref{fig:HyperCube} we do the same procedure starting with the cube with vertices $(\pm 1,\pm 1,\pm 1)$.  
\begin{figure}[htp]
\begin{subfigure}[b]{.35\textwidth}
\includegraphics[width=\textwidth]{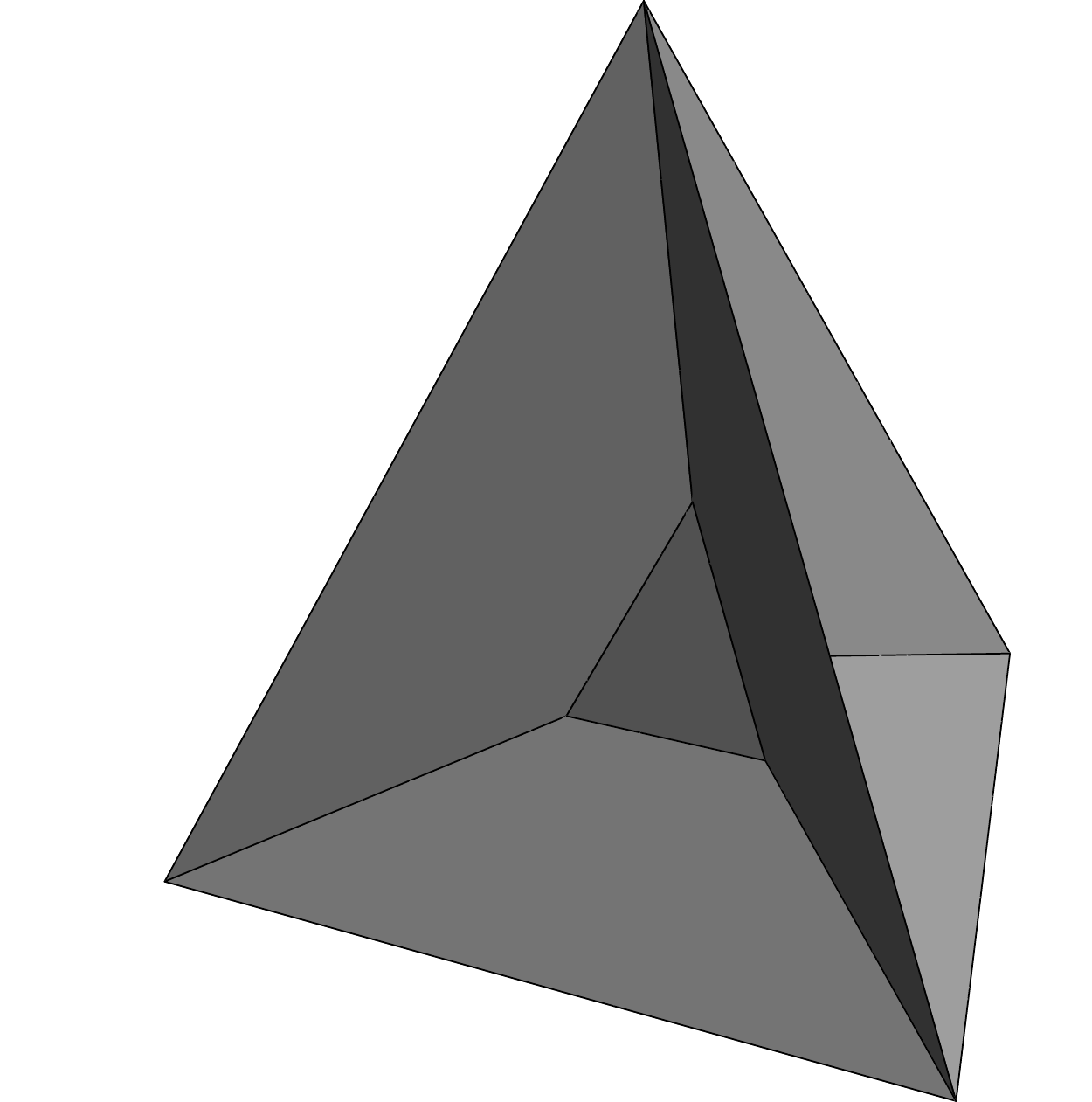}
\caption{$\PC_1$}\label{fig:HyperTet}
\end{subfigure}
\begin{subfigure}[b]{.35\textwidth}
\includegraphics[width=\textwidth]{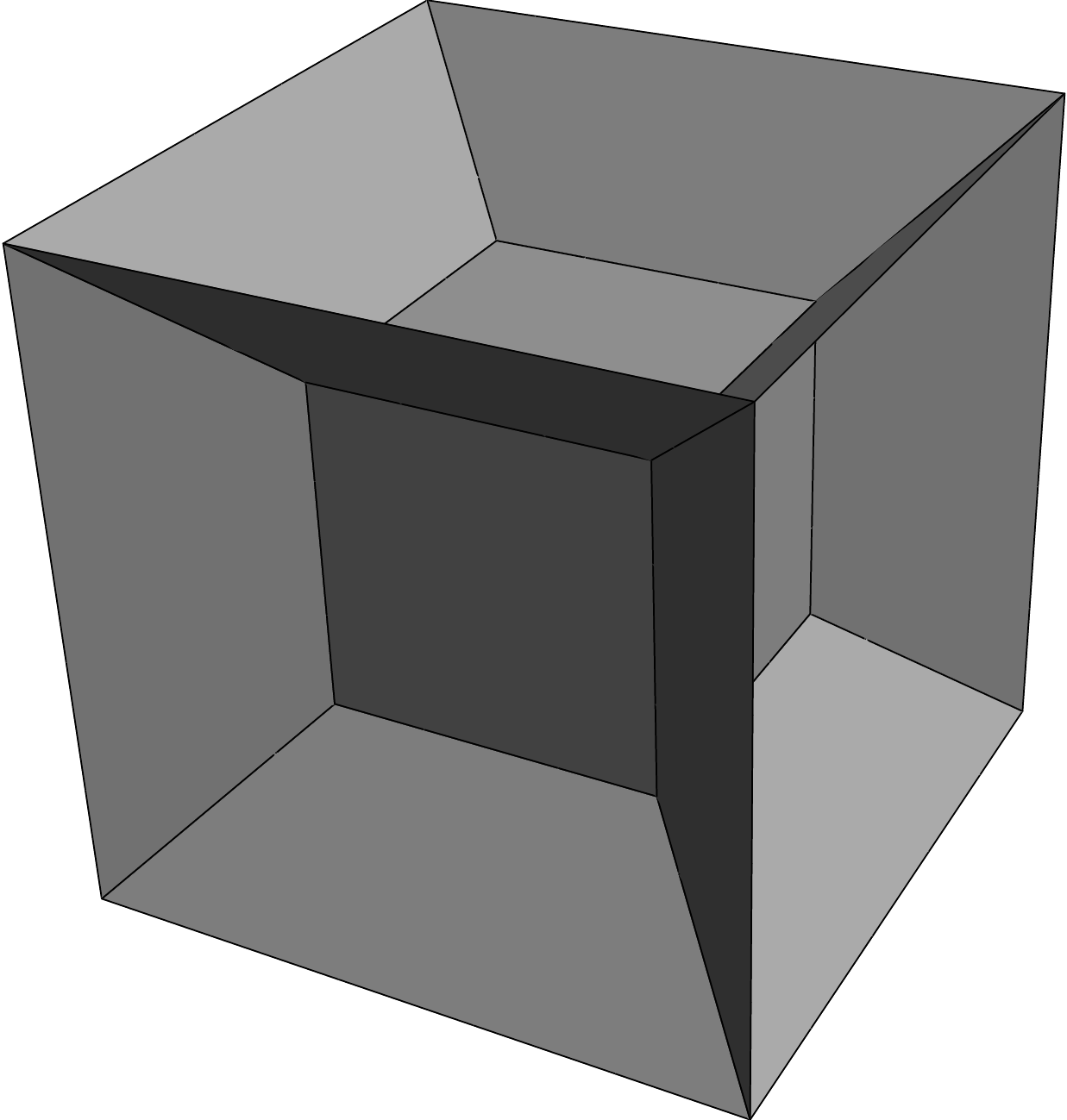}
\caption{$\PC_2$}\label{fig:HyperCube}
\end{subfigure}
\caption{}\label{fig:Hyper}
\end{figure}
Let $\Sigma_1=\wPC_1,\Sigma_2=\wPC_2$.  Take $S=\R[w,x,y,z]$, where $w$ is the cone variable.  If we do not impose any vanishing along the boundaries of $\PC_1,\PC_2$, computations in Macaulay2~\cite{M2} yield the following information about associated primes.    By dimension $-1$ we mean the module vanishes.
\begin{table}[htp]
\centering
\begin{tabular}{c|c|c}
Module & Dimension & Minimal Associated Primes\\
\hline
$H_3(\cR/\cJ[\Sigma_1,\partial\Sigma_1])$ & -1 & None\\
$H_2(\cR/\cJ[\Sigma_1,\partial\Sigma_1])$ & 1 & $(x,y,z)$\\
$H_3(\cR/\cJ[\Sigma_2,\partial\Sigma_2])$ & 2 & $(x,y),(y,z),(x,z)$\\
$H_2(\cR/\cJ[\Sigma_2,\partial\Sigma_2])$ & 1 & $(x,y,z)$
\end{tabular}
\caption{}\label{tbl:AssHyp}
\end{table}

The only information in Table~\ref{tbl:AssHyp} that we cannot deduce from Theorem~\ref{thm:assPrimes2} is the fact that $H_3(\cR/\cJ[\Sigma_1,\partial\Sigma_1])=0$.  If we impose vanishing along all $6$ codimension one boundary faces of $\Sigma_2$, then we obtain three additional codimension two associated primes of $H_3(\cR/\cJ[\Sigma_2])$.

\begin{table}[htp]
\centering
\begin{tabular}{c|c|c}
Module & Dimension & Minimal Associated Primes\\
\hline
$H_3(\cR/\cJ[\Sigma_2])$ & 2 & $(x,y),(y,z),(x,z),(x,w),(y,w),(z,w)$
\end{tabular}
\end{table}

The associated primes $(x,w),(y,w),(z,w)$ correspond to intersections at infinity of the affine spans of the four codimension one faces parallel to the $yz$, $xz$, and $xy$ planes, respectively.  Imposing vanishing on only three of four parallel affine spans will result in losing the corresponding associated prime.  This is easily seen using the graph $G_W((\Sigma_2)_{W,\sigma},(\Sigma^{-1}_2)_{W,\sigma})$, where $W$ is the line at infinity along which these affine spans intersect.
\end{exm}

It is much more difficult to describe associated primes which do not arise from mere topological considerations.  The following example, which we will continue in Section~\ref{sec:Fourth}, is one such.

\begin{exm}\label{ex:3DimMorganScot}
Consider the fan $\Sigma=\wDelta$, where $\Delta$ is the simplicial complex formed by placing an inverted tetrahedron symmetrically within a larger tetrahedron and connecting vertices as in Figure~\ref{fig:3DSimpMorganScot}.  The chosen coordinates for the inner tetrahedron in Figure~\ref{fig:3DSimpMorganScot} are $(0, 0, 8), (-4, -6, -3), (-4, 6, -3), (6, 0, -3)$ for the vertices labelled $0,1,2,3$, respectively.  The vertices of the outer tetrahedron are obtained by multiplying the coordinates of the inner tetrahedron by $-5$.  In this simplicial complex there are $15$ tetrahedra (listed by their vertices): 1234, 1678, 2578, 3568, 4567, 1278, 1368, 1467, 2358, 2457, 3456, 1238, 1346, 1247, 2345.

The important geometric consideration here is that the lines between vertices $0$ and $4$, $1$ and $5$, $2$ and $6$, $3$ and $7$ all intersect at the origin.  This is the three dimensional analogue of an example due to Morgan-Scott~\cite{Morgan} considered by Schenck ~\cite[Example~5.3]{Spect}.
\begin{figure}[htp]
\includegraphics[width=.5\textwidth]{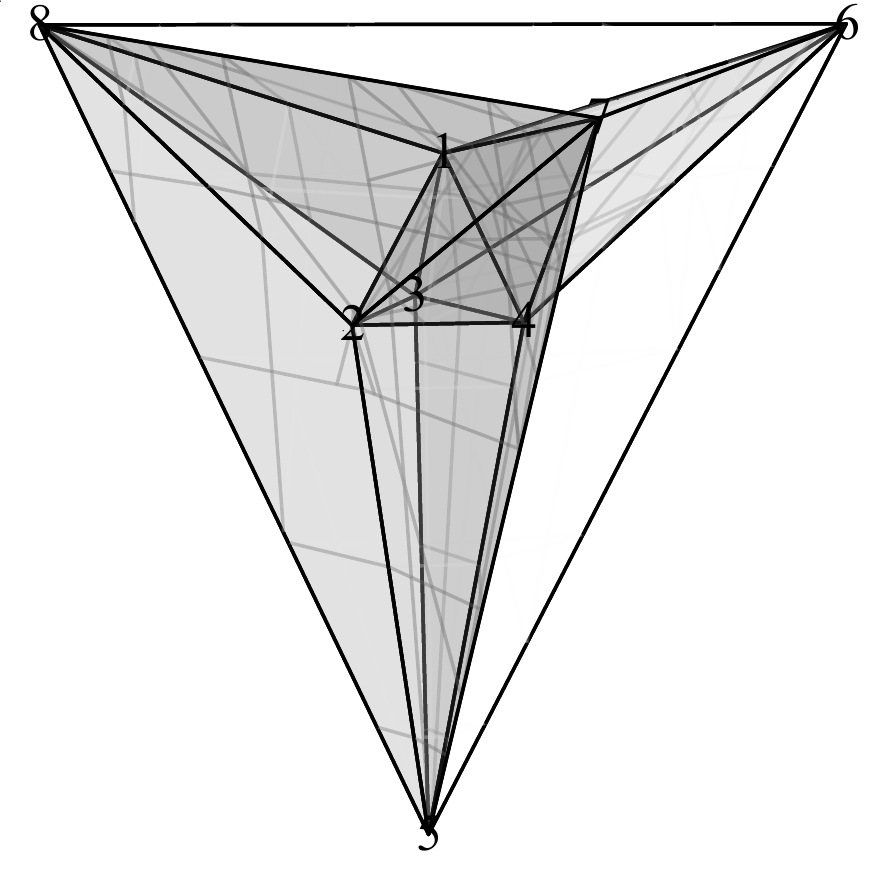}
\caption{Three dimensional Morgan-Scott analogue}\label{fig:3DSimpMorganScot}
\end{figure}

Let us consider the algebra $C^1(\Sigma)$ - recall this means that we assign smoothness parameters $\alpha(\tau)=1$ to every interior codimension one face and impose no vanishing conditions along the boundary, so $\Sigma^{-1}=\partial\Sigma=\widehat{\partial\Delta}$ and $C^1(\Sigma)=H_4(\cR/\cJ[\Sigma,\partial\Sigma])$.  Schenck computes that $H_2(\cR/\cJ[\Sigma,\partial\Sigma])=0$, a computation readily verified in Macaulay2 (in his paper the homological degree is shifted down one from ours).  He also finds that $H_3(\cR/\cJ[\Sigma,\partial\Sigma])$ has associated primes in codimensions three and four.  The associated prime of codimension four is the homogeneous maximal ideal of $S=\R[x_0,x_1,x_2,x_3]$.  By Theorem~\ref{thm:assPrimes1} part $(3)$, the associated primes of codimension three have the form $I(v)$, where $v$ is a ray of $\Sigma$, corresponding to a vertex in $\Delta$.  Indeed, computations in Macaulay2 indicate that there are $8$ associated primes of codimension $3$ and these are precisely the homogeneous ideals of the vertices of $\Delta$.  We will return to this example in Section~\ref{sec:Fourth} to understand how these associated primes contribute to the fourth coefficient of the Hilbert polynomial of $C^1(\Sigma)$.
\end{exm}

\begin{remark}
In general it is quite difficult to analyze $H_3(\cR/\cJ[\Sigma,\Sigma^{-1}])$ for a fan $\Sigma\subset\R^4$.  We will see in Section~\ref{sec:Fourth} that if $\Sigma$ is simplicial then we can deduce the dimension of this module in large degrees if we are able to compute $H_2(\cR/\cJ[\Sigma,\Sigma^{-1}])$ for simplicial $\Sigma\subset\R^3$.  This latter module, while simpler than $H_3(\cR/\cJ[\Sigma,\Sigma^{-1}])$, is still largely not understood.
\end{remark}

The three dimensional analogue of the Morgan-Scott configuration in Example~\ref{ex:3DimMorganScot} gives rise to interesting associated primes in the case of uniform smoothness $r=1$.  One way to mimic a Morgan and Scott example with polytopal complexes is to start with a polytope $\PC$ and fit it symmetrically within the polar polytope $\PC^o$, and connect up vertices belonging to dual faces.

\begin{exm}\label{ex:CubeOct}
Let $\PC$ be the pure $3$-dimensional polytopal complex constructed by starting with an octahedron having vertices $(\pm 1,0,0),(0,\pm 1,0),(0,0,\pm 1)$.  Fit this inside a cube with vertices $(\pm 2,\pm 2,\pm 2)$.  Then let the facets be the inner octahedron, the convex hull of each edge of the octahedron with its dual edge on the cube, and the convex hull of each vertex of the octahedron with its dual cube face, yielding $f_3(\PC)=27$.  Labelling each facet by its vertices, the $20$ tetrahedra are: (1,3,9,13), (1,4,10,14), (2,4,7,11), (2,3,8,12), (1,5,13,14), (4,5,11,14), (2,5,11,12),(3,5,12,13), (1,6,9,10), (4,6,7,10), (2,6,7,8), (3,6,8,9), (1,4,5,14), (2,4,5,11), (2,3,5,12), (1,3,5,13), (1,4,6,10), (2,4,6,7), (2,3,6,8), (1,3,6,9).  There are also $6$ pyramids with square bases: (1,9,10,13,14), (2,7,8,11,12), (3,8,9,12,13), (4,7,10,11,14), (5, 11,12,13,14), (6,7,8,9,10)

\begin{figure}
\centering
\includegraphics[width=.5\textwidth]{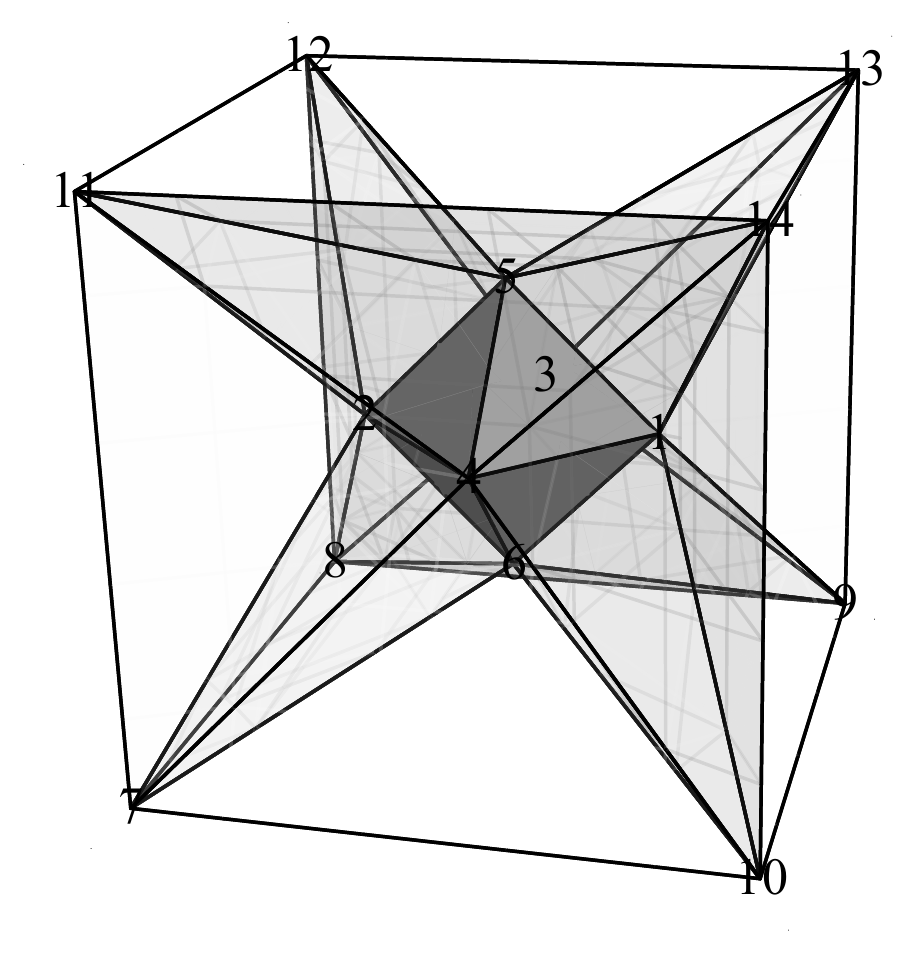}
\caption{$\PC$}
\end{figure}

Let $\Sigma=\wPC$, and consider uniform smoothness with $r=1$.  Set $S=\R[w,x,y,z]$, where $w$ is the cone variable.  Computations in Macaulay2 yield the results of Table~\ref{tbl:AssCubeOct}.

\begin{table}[htp]
\centering
\begin{tabular}{c|c|c}
Module & Dimension & Minimal Associated Primes\\
\hline
$H_3(\cR/\cJ[\Sigma,\partial\Sigma])$ & 1 & Ideals of all vertices, $(x,y,w),(x,z,w),(y,z,w)$\\
\hline
$H_2(\cR/\cJ[\Sigma,\partial\Sigma])$ & -1 & None
\end{tabular}
\caption{}\label{tbl:AssCubeOct}
\end{table}

Like the simplicial three dimensional analogue of the Morgan-Scott configuration, the ideals of the vertices are not something we can see arising in a topological manner.  However, the three additional ideals are of interest and \textit{are} in fact topological.  Since they are all symmetric, consider the ideal $(x,y,w)$.  This is the ideal of the point $w\in L_{\Sigma,\Sigma^{-1}}$ at which the $z$-axis meets the hyperplane at infinity.  The lattice fan $\Sigma_{w,\sigma}$ for any facet $\sigma\in\Sigma_4$ having a codimension one face $\tau$ with $w\in\mbox{supp}(\tau)$, consists of $8$ facets which surround the $z$-axis, namely the cones over the facets with labels (1,9,10,13,14), (1,4,10,14), (4,7,10,11,14), (2,4,7,11), (2,7,8,11,12), (2,3,8,12), (3,8,9,12,13), (1,3,9,13).  Topologically, the pair $(\Sigma_{w,\sigma},\partial\Sigma_{w,\sigma})$ is the cone over a torus $\mathbb{T}^2$ and its boundary.  Via excision this yields $H_3(\cR[\Sigma_{w,\sigma},\partial\Sigma_{w,\sigma}])\cong H_2(\mathbb{T}^2,\partial\mathbb{T}^2;S)=S$, hence via long exact sequences $H_3(\cR/\cJ[\Sigma_{w,\sigma},\partial\Sigma_{w,\sigma}])\cong \dfrac{S}{\sum_{\tau\in\Sigma^0_{w,\sigma}} J(\tau)}$.  This gives us the associated prime $(x,y,w)$.  The others follow analogously.

\begin{remark}\label{rem:Yuz}
The facets of the lattice fan $\Sigma_{w,\sigma}$ form an equivalence class $[\sigma]_w$ which is not present in the equivalence relation defined by Yuzvinsky~\cite[\S~2]{Yuz}.  The reason for this is that the flat $w\in L_{\Sigma,\partial\Sigma}$ cannot be obtained by intersecting affine spans of codimension one faces of any single facet $\sigma\in\Sigma$.
\end{remark}

\end{exm}

\section{Hilbert Polynomials}

In this section we prove Proposition~\ref{prop:HFHomMainTerm}, which is the primary tool for translating our observations on associated primes into computations of Hilbert polynomials.  The following two sections address computations of the third and fourth coefficients of the Hilbert polynomial of $C^\alpha(\Sigma)$.  We begin by summarizing the commutative algebra which we will need.

If $M$ is any $S=\R[x_0,\ldots,x_n]$-module, a \textit{finite free resolution} of $M$ of length $r$ is an exact sequence of free $S$-modules
\[
F_\bullet: \mbox{ }0\rightarrow F_r\xrightarrow{\phi_r} F_{r-1}\xrightarrow{\phi_{r-1}} \cdots \xrightarrow{\phi_1} F_0
\]
such that $\mbox{coker }\phi_1=M$.  The Hilbert syzygy theorem guarantees that $M$ has a finite free resolution.  The \textit{projective dimension} of $M$, denoted $\mbox{pd}(M)$, is the minimum length of a finite free resolution.  If $M$ is a graded $S$-module with $\mbox{pd}(M)=\delta$ then $M$ has a minimal free resolution $F_\bullet\rightarrow M$ of length $\delta$, unique up to graded isomorphism.  This resolution is characterized by the property that the entries of any matrix representing the differentials $\phi.$ in $F_\bullet$ are contained in the homogeneous maximal ideal $(x_0,\ldots,x_n)$.

Recall that if $M$ is a finitely generated nonnegatively graded $S$-module, we may write $M=\bigoplus_{i\ge 0} M_i$, where each $M_i$ is an $\R$-vector space.  The \textit{Hilbert function} of $M$ in degree $d$ is $HF(M,d)=\dim M_d$.  For $d\gg 0$ this agrees with a polynomial called the \textit{Hilbert polynomial} of $M$, denoted $HP(M,d)$.  If $HP(M,d)=0$, then $M_d=0$ for $d\gg 0$.  Such modules are said to have finite length.  If $M$ is a module of finite length, then its \textit{socle degree} is the largest degree $k$ so that $M_k\neq 0$.

The standard use of the complex $\cR/\cJ[\Sigma,\Sigma^{-1}]$ is to compute the dimensions of the vector spaces $C^\alpha(\Sigma)$ via an Euler characteristic computation, which we can state in terms of Hilbert functions as
\[
\sum_{i=0}^{n+1} (-1)^i HF(\cR/\cJ[\Sigma,\Sigma^{-1}]_{n+1-i},d)=\sum_{i=0}^{n+1} (-1)^i HF(H_{n+1-i}(\cR/\cJ[\Sigma,\Sigma^{-1}]),d).
\]
Set
\[
\begin{array}{rl}
\chi(\cR/\cJ[\Sigma,\Sigma^{-1}],d) & =\sum_{i=0}^{n+1} (-1)^i HF(\cR/\cJ[\Sigma,\Sigma^{-1}]_{n+1-i},d)\\
&=\sum\limits_{i=0}^{n+1} (-1)^i\left( \sum\limits_{\gamma\in\Sigma^{\ge 0}_{n+1-i}} HF\left(\dfrac{S}{J(\gamma)},d \right)\right).
\end{array}
\]
Recall from Lemma~\ref{lem:SplinesTop} that $H_{n+1}(\cR/\cJ[\Sigma,\Sigma^{-1}])=C^\alpha(\Sigma)$.  This yields

\begin{multline}\label{eqn:EulerCharacteristic}
HF(C^\alpha(\Sigma),d)=\chi(\cR/\cJ[\Sigma,\Sigma^{-1}],d) \\ - \sum\limits_{i=1}^{n+1}(-1)^i HF(H_{n+1-i}(\cR/\cJ[\Sigma,\Sigma^{-1}]),d).
\end{multline}

Determining $HF(C^\alpha(\Sigma),d)$ from Equation~\ref{eqn:EulerCharacteristic} requires two tasks, both of which are unsolved in general.  The first task is to determine the dimensions of the vector spaces $\left(\dfrac{S}{J(\gamma)}\right)_d$, which are quotients of the polynomial ring by an ideal generated by powers of linear forms.  This is itself a rich field of research with connections to Waring's problem and fat point ideals ~\cite{Ciliberto, Waring}.  In~\cite{Jimmy}, Shan exploits these connections (particularly an algorithm due to Geramita-Harbourne-Migliore~\cite{GHMFatPoint} for computing Hilbert functions of certain fat point ideals) to obtain bounds on $\dim C^2(\Sigma)_d$ for $\Sigma\subset\R^3$.

The second task is to compute $\dim H_i(\cR/\cJ[\Sigma,\Sigma^{-1}])_d$.  There are few tools for dealing with these homology modules.  Mourrain and Villamizar give bounds on the dimension of these homology modules when $\Sigma=\wDelta$, the cone over a simplicial complex $\Delta$, when $\Delta\subset\R^2$ and $\Delta\subset\R^3$ \cite{D2,D3}.  Armed with these bounds and the current knowledge of fat point ideals, they obtain bounds on the dimension of the spline space $C^r(\wDelta)_d$ using Equation~\eqref{eqn:EulerCharacteristic}.

A slightly different approach, taken primarily by Schenck with various co-authors, is to compute the Hilbert polynomial of $C^\alpha(\Sigma)$ using Equation~\eqref{eqn:EulerCharacteristic}~\cite{FatPoints,TSchenck08,Chow}.  This approach, which we also take, ignores information that `eventually vanishes.'  Our first step is to pull out the leading term of the Hilbert polynomial of the homology module $H_i(\cR/\cJ[\Sigma,\Sigma^{-1}])$.  This should be seen as a generalization of  ~\cite[Theorem~3.10]{TSchenck08} and ~\cite[Corollary~2.7]{Chow}.  An important difference is that the aforementioned results only apply when $\dim H_i(\Sigma,\Sigma^{-1})=i-1$, the maximal possible dimension.  In particular, these formulas do not apply to simplicial complexes, where $\dim H_i(\Sigma,\Sigma^{-1})<i-1$ by Theorem~\ref{thm:assPrimes1}.

Recall that $H_i(\cR/\cJ[\Sigma_W,\Sigma^{-1}_W])=\bigoplus_{\sigma\in\Gamma_W} H_i(\cR/\cJ[\Sigma_{W,\sigma},\Sigma^{-1}_{W,\sigma}])$, where $\Gamma_W$ is a set of representatives for the equivalence class of facets modulo the equivalence relation $\sim_W$ of Definition~\ref{def:LatticePair}.

\begin{prop}\label{prop:HFHomMainTerm}
Let $\Sigma\subset\R^{n+1}$ be a pure,hereditary, $(n+1)$-dimensional fan with smoothness parameters $\alpha$ and set $k=\dim H_i(\cR/\cJ[\Sigma,\Sigma^{-1}])$.  Then
\[
HP(H_i(\cR/\cJ[\Sigma,\Sigma^{-1}]),d)= \sum\limits_{\substack{W\in L_{\Sigma,\Sigma^{-1}},\\ \dim(W)=k}} HP(H_i(\cR/\cJ[\Sigma_W,\Sigma^{-1}_W]),d) + O(d^{k-2}).
\]
If $k=1$, $O(d^{k-2})$ is understood to be $0$.
\end{prop}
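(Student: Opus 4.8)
The plan is to reduce the assertion to an equality of the coefficient of $d^{k-1}$ on its two sides and then to compute that coefficient by localizing at the flats of dimension $k$. Write $M=H_i(\cR/\cJ[\Sigma,\Sigma^{-1}])$ and, for $W\in L_{\Sigma,\Sigma^{-1}}$, set $M_W=H_i(\cR/\cJ[\Sigma_W,\Sigma^{-1}_W])$; note $\dim M=k$. The case $k\le 0$ is immediate, since then $M$ has finite length, $HP(M,d)=0$, and every $M_W$ with $\dim W=k$ also has finite length; so assume $k\ge 1$. Two standard facts from commutative algebra will be used (see \cite{Matsumura}): first, the behaviour of associated primes under localization, namely $\mbox{Ass}_{S_\mathfrak{q}}(L_\mathfrak{q})=\{PS_\mathfrak{q}\mid P\in\mbox{Ass}_S(L),\,P\subseteq\mathfrak{q}\}$; second, that for a finitely generated graded $S$-module $L$ with $\dim L\le k$ the coefficient of $d^{k-1}$ in $HP(L,d)$ equals $\tfrac{1}{(k-1)!}\sum_{\dim S/P=k}\ell_{S_P}(L_P)\,e(S/P)$, the sum running over the finitely many primes of dimension $k$ in $\mbox{Supp}(L)$. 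All primes $P$ occurring below are linear, so $e(S/P)=1$.

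The key point is to show that $\dim M_W\le k$ whenever $\dim W=k$; granting this, both sides of the identity have degree at most $k-1$, so it suffices to compare their $d^{k-1}$-coefficients. Fix such a $W$. From the proof of Lemma~\ref{lem:Localize} we have $\cR/\cJ[\Sigma_W,\Sigma^{-1}_W]=\cR/\cJ[\Sigma,\Sigma^c_W\cup\Sigma^{-1}]$, and every codimension-one face in $\Sigma_n\setminus(\Sigma^c_W\cup\Sigma^{-1})_n$ has $W$ in its affine span; hence Lemma~\ref{lem:ProjAssW} gives $\mbox{Ass}_S(M_W)\subseteq\{Q\in\mbox{Spec}(S)\mid Q\subseteq I(W)\}$. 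On the other hand, Lemma~\ref{lem:Localize} applied with $P=I(W)$, whose associated flat is $W$ itself, together with exactness of localization, yields $M_{I(W)}=(M_W)_{I(W)}$. Now $M_{I(W)}$ has finite length over $S_{I(W)}$: any $Q\in\mbox{Supp}(M)$ with $Q\subseteq I(W)$ satisfies $k=\dim S/I(W)\le\dim S/Q\le\dim M=k$, so $\dim S/Q=k$ and hence $Q=I(W)$, leaving only the closed point in $\mbox{Supp}(M_{I(W)})$. Therefore $(M_W)_{I(W)}$ has finite length, so its unique associated prime is $I(W)S_{I(W)}$; comparing with the inclusion above, every associated prime of $M_W$ equals $I(W)$. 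Thus $\mbox{Ass}(M_W)\subseteq\{I(W)\}$, so $\dim M_W\le\dim S/I(W)=k$ and $I(W)$ is the only dimension-$k$ prime in $\mbox{Supp}(M_W)$.

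It remains to compute the $d^{k-1}$-coefficients. By Theorem~\ref{thm:assPrimes1}(1) every prime of dimension $k$ in $\mbox{Supp}(M)$ is $I(W)$ for some $W\in L_{\Sigma,\Sigma^{-1}}$ with $\dim W=k$ (such a prime is necessarily a minimal prime of $M$); for $i=n+1$ one uses instead that $C^\alpha(\Sigma)$ is a submodule of a free $S$-module, hence torsion-free. The second standard fact, applied to $M$, then gives that the $d^{k-1}$-coefficient of the left-hand side is $\tfrac{1}{(k-1)!}\sum_{\dim W=k}\ell(M_{I(W)})$. Applying the same fact to each $M_W$, which is legitimate by the previous paragraph, and using that $I(W)$ is the unique dimension-$k$ prime in $\mbox{Supp}(M_W)$, the $d^{k-1}$-coefficient of $HP(M_W,d)$ is $\tfrac{1}{(k-1)!}\ell((M_W)_{I(W)})=\tfrac{1}{(k-1)!}\ell(M_{I(W)})$, using $M_{I(W)}=(M_W)_{I(W)}$. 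Summing over the flats $W$ of dimension $k$ matches the two coefficients and proves the proposition; when $k=1$ both sides are in fact eventually constant, so the $O(d^{k-2})$ term is literally $0$.

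The step I expect to be the main obstacle is the bound $\dim M_W\le k$ in the second paragraph. A connected component $\Sigma_{W,\sigma}$ of $\Sigma_W$ may have all of its interior codimension-one affine spans meeting in a flat $W'$ strictly larger than $W$, in which case $H_i(\cR/\cJ[\Sigma_{W,\sigma},\Sigma^{-1}_{W,\sigma}])$ could a priori have dimension $\dim W'>k$, ruining the degree count on the right-hand side. The resolution is exactly the interaction of Lemma~\ref{lem:ProjAssW}, which bounds the associated primes of $M_W$ from above by $I(W)$, with the finite length of $M_{I(W)}$ forced by $\dim M=k$; this finiteness passes through the direct-sum decomposition $(M_W)_{I(W)}=\bigoplus_{\sigma\in\Gamma_W}(H_i(\cR/\cJ[\Sigma_{W,\sigma},\Sigma^{-1}_{W,\sigma}]))_{I(W)}$ and forces any such oversized component to vanish. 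This is also precisely where the statement improves on \cite[Theorem~3.10]{TSchenck08} and \cite[Corollary~2.7]{Chow}, where $k=i-1$ is assumed so that $\dim M_W\le k$ is automatic from Theorem~\ref{thm:assPrimes1}(1).
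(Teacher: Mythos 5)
Your proof is correct, and it takes a genuinely different route from the paper's. The paper constructs an explicit map of complexes $q_W\colon \cR/\cJ[\Sigma,\Sigma^{-1}]\to\cR/\cJ[\Sigma_W,\Sigma'_W]$, sums these into a single morphism $\bar q_i$ on homology, and then shows that both the kernel and cokernel of $\bar q_i$ have dimension at most $k-1$ (by analyzing the associated primes of the target and checking that $\bar q_i$ is an isomorphism after localizing at each dimension-$k$ linear prime); additivity of Hilbert polynomials over exact sequences finishes the argument. You instead bypass the construction of a map entirely: you bound $\dim M_W$ and then compare leading coefficients of Hilbert polynomials directly via the associativity formula $\mathrm{coeff}_{d^{k-1}} HP(L,d)=\frac{1}{(k-1)!}\sum_{\dim S/P=k}\ell_{S_P}(L_P)\,e(S/P)$, using $M_{I(W)}=(M_W)_{I(W)}$ from Lemma~\ref{lem:Localize} as the bridge between the two sides.

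You also identify the genuine technical content of the statement correctly: the hard part is the bound $\dim M_W\le k$, which is not automatic when $k<i-1$. The paper gets this by applying Theorem~\ref{thm:assPrimes1} to each lattice fan $\Sigma_{W,\sigma}$, recognizing $L_{\Sigma_{W,\sigma},\Sigma^{-1}_{W,\sigma}}$ inside $L_{\Sigma,\Sigma^{-1}}$, and tracing any associated prime $I(V)$ of a component back to an associated prime of $M$ to force $\dim V\le k$; combined with $W\subseteq V$ this yields $V=W$. Your route is more economical: you apply Lemma~\ref{lem:ProjAssW} directly to $\cR/\cJ[\Sigma,\Sigma^c_W\cup\Sigma^{-1}]=\cR/\cJ[\Sigma_W,\Sigma^{-1}_W]$ to get the upper bound $\mathrm{Ass}(M_W)\subseteq\{Q\mid Q\subseteq I(W)\}$, and then use the finite length of $M_{I(W)}=(M_W)_{I(W)}$ (forced by $\dim M=k=\dim S/I(W)$) to collapse this to $\mathrm{Ass}(M_W)\subseteq\{I(W)\}$. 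Both are valid; the paper's version has the side benefit of exhibiting an actual map realizing the comparison, while yours avoids having to re-examine the sublattice structure of the lattice fans. Your remark about $i=n+1$ is tangential -- in practice the proposition is applied for $i\le n$, as Theorem~\ref{thm:assPrimes1} requires, and the $i=n+1$ case is trivial anyway since $C^\alpha(\Sigma)$ is torsion-free -- but it does no harm.
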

\begin{proof}
For any $W\in L_{\Sigma,\Sigma^{-1}}$ there is a map of complexes
\[
\cR/\cJ[\Sigma,\Sigma^{-1}]\xrightarrow{q_W} \cR/\cJ[\Sigma_W,\Sigma^{-1}_W],
\]
The right hand side is the quotient of $\cR/\cJ[\Sigma,\Sigma^{-1}]$ by the sub-chain complex $\cR/\cJ[\Sigma^c_W,\Sigma^c_W\cup\Sigma^{-1}]$, where $\Sigma^c_W$ is the subfan of faces whose affine span does not contain $W$.  This descends to a map $\bar{q}_{W,i}$ in homology,
\[
H_i(\cR/\cJ[\Sigma,\Sigma^{-1}])\xrightarrow{\bar{q}_{W,i}} H_i(\cR/\cJ[\Sigma_W,\Sigma^{-1}_W]).
\]
Summing over all $W\in L_{\Sigma,\Sigma^{-1}}$ with $\dim W=k$ and setting $\bar{q}_i=\sum_W \bar{q}_{W,i}$ we obtain:
\begin{equation}\label{eqn:Quotient}
H_i(\cR/\cJ[\Sigma,\Sigma^{-1}])\xrightarrow{\bar{q}_i} \bigoplus\limits_{W} H_i(\cR/\cJ[\Sigma_W,\Sigma^{-1}_W]).
\end{equation}
If $M$ is a graded $S=\R[x_0,\ldots,x_n]$ module with $\dim M=k$, then $HP(M,d)$ has degree $k-1$.  By the additivity of the Hilbert polynomial across exact sequences, we will be done if we can show that the kernel and cokernel of $\bar{q}_i$ both have dimension $\le k-1$.  This in turn will follow if we show
\begin{enumerate}
\item[(A)] The target of $\bar{q}_i$ in ~\eqref{eqn:Quotient} has dimension $k$
\item[(B)] $\bar{q}_i$ becomes an isomorphism under localization at primes of codimension exactly $n+1-k$
\end{enumerate}

We refer to the source of $\bar{q}_i$ in~\eqref{eqn:Quotient} as LHS and the target of $\bar{q}_i$ as RHS.  Suppose that $P$ is an associated prime of RHS.  Then by Proposition~\ref{prop:AssFacts} part (2), $P$ is an associated prime of $H_i(\cR/\cJ[\Sigma_{W,\sigma},\Sigma^{-1}_{W,\sigma}])$ for some $W,\sigma$, with $\dim W=k$.  

Now set $\varGamma=\Sigma_{W,\sigma}$ and $\varGamma^{-1}=\Sigma^{-1}_{W,\sigma}$.  Then $L_{\varGamma,\varGamma^{-1}}$ is the sublattice of $L_{\Sigma,\Sigma^{-1}}$ consisting of the flats 
\[
\{V=\mbox{aff}(\gamma)\in L_{\Sigma,\Sigma^{-1}}| W\in \mbox{supp}(\gamma), \gamma\in\sigma'\mbox{ for some } \sigma'\sim_W\sigma \}.
\]
Furthermore, for any $V\in L_{\varGamma,\varGamma^{-1}}$ and $\sigma\in\varGamma_{n+1}$, $\varGamma_{V,\sigma}=\Sigma_{V,\sigma}$.  By Theorem~\ref{thm:assPrimes1}, $P=I(V)$ for some $V\in L_{\varGamma,\varGamma^{-1}}$.  Lemma~\ref{lem:Localize} yields
\[
\begin{array}{rl}
(H_i(\cR/\cJ[\varGamma,\varGamma^{-1}]))_{I(V)}= & \bigoplus\limits_{\sigma\in \Gamma'_V}(H_i(\cR/\cJ[\varGamma_{V,\sigma},\varGamma^{-1}_{V,\sigma}]))_{I(V)}\\
= & \bigoplus\limits_{\sigma\in \Gamma'_V}(H_i(\cR/\cJ[\Sigma_{V,\sigma},\Sigma^{-1}_{V,\sigma}]))_{I(V)},
\end{array}
\]
where $\Gamma'_V$ runs across representatives of the equivalence classes $[\sigma]_V$ for $\sigma\in\varGamma_{n+1}$.  The final direct sum above appears as a summand of $H_i(\cR/\cJ[\Sigma,\Sigma^{-1}])_{I(V)}$, according to Lemma~\ref{lem:Localize}.  It follows from Proposition~\ref{prop:AssFacts} parts (1) and (2) that $I(V)$ is an associated prime of $H_i(\cR/\cJ[\Sigma,\Sigma^{-1}])$.  Making use of the formula
\[
\dim M=\max\{\dim R/P| P\in \mbox{Ass}(M)\},
\]
we find that $k=\dim H_i(\cR/\cJ[\Sigma,\Sigma^{-1}])\ge \dim V\ge \dim W=k\implies V=W$ and $P=I(W)$.  By dimension considerations, $I(W)$ must be a minimal associated prime of $H_i(\cR/\cJ[\Sigma,\Sigma^{-1}])$.

Thus the associated primes of RHS are precisely the minimal associated primes of LHS, and these are contained in the set of primes $\{I(W)|W\in L_{\Sigma,\Sigma^{-1}}, \dim(W)=k\}$.  It follows immediately that $\dim$ RHS$=k$, proving (A).  To prove (B) we need only show that $\bar{q}_i$ becomes an isomorphism under localization at primes of the form $I(V)$, $\dim V=k$.  By Lemma~\ref{lem:Localize},
\[
H_i(\cR/\cJ[\Sigma,\Sigma^{-1}])_{I(V)}=H_i(\cR/\cJ[\Sigma_V,\Sigma^{-1}_V])_{I(V)}
\]
The summands of RHS in~(2) have the form $H_i(\cR/\cJ[\Sigma_W,\Sigma^{-1}_W])$, where $\dim W=k$.  As we have seen, each of these summands either has dimension less than $k$ or has the unique minimal associated prime $I(W)$.  It follows that a summand of RHS vanishes under localization at $I(V)$ unless it is precisely the summand  $H_i(\cR/\cJ[\Sigma_V,\Sigma^{-1}_V])$.  This completes the proof of (2).
\end{proof}

\section{Third Coefficient of Hilbert Polynomial}\label{sec:Third}
In this section we apply Proposition~\ref{prop:HFHomMainTerm} and Theorem~\ref{thm:assPrimes1} to yield a formula for the third coefficient of the Hilbert polynomial $HP(C^\alpha(\Sigma),d)$ for any assignment of smoothness parameters $\alpha$.  Our approach synthesizes computations from two papers: Geramita and Schenck's computation for planar simplicial complexes with mixed smoothness in~\cite{FatPoints} and McDonald and Schenck's computation of the third coefficient of $HP(C^r(\wPC),d)$ for arbitrary polytopal complexes and uniform smoothness in~\cite{TSchenck08}.  Our main contributions to this story are twofold: we allow arbitrary vanishing conditions to be imposed along codimension one boundary faces, and we connect the third coefficient (in the polytopal case) to the topology of the lattice fans $\Sigma_{W,\sigma}$.

Looking back to Equation~\ref{eqn:EulerCharacteristic}, we see that by dimension considerations there are $4$ terms that will contribute to the first three coefficients of $HP(C^\alpha(\Sigma))$, for $\Sigma\subset\R^{n+1}$ a pure $(n+1)$-dimensional hereditary polyhedral fan.  These are recorded in Table~\ref{tbl:Dims}.

\begin{table}[htp]
\renewcommand{\arraystretch}{2}%
\centering
\begin{tabular}{c|c|c}
Dimension & Module & Hilbert Polynomial \\
\hline
$n+1$ & $S^{f_{n+1}(\Sigma)}$ & $f_{n+1}(\Sigma)\binom{d+n}{n}$ \\
\hline
$n$ & $\bigoplus\limits_{\tau\in\Sigma^{\ge 0}_n}\dfrac{S}{J(\tau)}$ & $\sum\limits_{\tau\in\Sigma^{\ge 0}_n} \binom{d+n}{n}-\binom{d+n-\alpha(\tau)-1}{n}$ \\
\hline
$n-1$ & $\bigoplus\limits_{\gamma\in\Sigma^{\ge 0}_{n-1}}\dfrac{S}{J(\gamma)}$ & ?\\
\hline
$n-1$ & $H_n(\cR/\cJ[\Sigma,\Sigma^{-1}])$ & ?
\end{tabular}
\caption{}\label{tbl:Dims}
\end{table}

The Hilbert polynomials of the first two entries on the table are simple to derive.  The first is well known and the second follows from the fact that $J(\tau)=\langle l_\tau^{\alpha(\tau)+1} \rangle$ and the one-step resolution
\[
S(-\alpha(\tau)-1)\rightarrow S
\]
for $J(\tau)$.  The question marks in the table are resolved by understanding Hilbert functions of ideals of powers of linear forms in two variables, which is the heart of the paper by Geramita and Schenck~\cite{FatPoints}.  We summarize this result, which is obtained using inverse systems to translate the problem into calculating dimensions of ideals of \textit{fat points} in $\mathbb{P}^1$.

\begin{thm}\label{thm:FatPoints}\cite[Theorem~2.7]{FatPoints}
Suppose $\alpha_1,\ldots,\alpha_\mu$ are positive integers, \\
$L_1, \ldots ,L_\mu\in S=\R[x,y]$ are linear forms (not all multiples of the same linear form) and let $J$ be the $(x,y)$-primary ideal minimally generated by $(L_1^{\alpha_1},\ldots,L_\mu^{\alpha_\mu})$.  Let
\[
\Omega=\left\lfloor\dfrac{\sum_{i=1}^\mu \alpha_i -\mu}{\mu-1} \right\rfloor+1.
\]
Then $\Omega-1$ is the socle degree of $S/J$ and the graded minimal free resolution of $J$ has the form
\[
0\rightarrow S(-\Omega-1)^a\oplus S(-\Omega)^{t-1-a} \rightarrow \oplus_{i=1}^\mu S(-\alpha_i) \rightarrow J \rightarrow 0,
\]
where $a=\sum_{i=1}^\mu \alpha_i+(1-\mu)\cdot\Omega$.
\end{thm}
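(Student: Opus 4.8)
The plan is to reduce the statement to two Hilbert-function computations over $S=\R[x,y]$: first the Hilbert function of $S/J$, which pins down the socle degree, and then — reading that Hilbert function off degree by degree — the graded Betti numbers of $J$. Write $N=\sum_{i=1}^\mu\alpha_i$. A preliminary observation is that minimal generation forces $\alpha_i\le\Omega$ for every $i$: otherwise $L_i^{\alpha_i}$ would lie in the $\mathfrak m$-primary ideal generated by the remaining $\mu-1$ forms, whose socle degree is given by the same formula applied with one fewer generator, and a short manipulation of the two floors contradicts $L_i^{\alpha_i}\notin\langle L_k^{\alpha_k}:k\ne i\rangle$.

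\emph{Step 1 (Hilbert function and socle degree).} I would compute $HF(S/J,d)=\dim_\R(S/J)_d$ by Macaulay inverse systems. Letting $P_i\in\mathbb{P}^1$ be the point dual to $L_i$, the apolarity dictionary identifies $(S/J)_d$ with the degree-$d$ part of the ideal of the fat point scheme $Z_d=\sum_{i=1}^\mu(d-\alpha_i+1)_+P_i\subset\mathbb{P}^1$; since every ideal of a $0$-dimensional subscheme of $\mathbb{P}^1$ is principal, $I_{Z_d}$ is generated in degree $\sum_i(d-\alpha_i+1)_+$, so
\[
HF(S/J,d)=\max\!\Bigl(0,\ (d+1)-\sum_{i=1}^\mu(d-\alpha_i+1)_+\Bigr).
\]
For $d\ge\max_i\alpha_i-1$ — in particular, by the preliminary observation, for every $d$ in the window $\{\Omega-1,\Omega,\Omega+1\}$ — each summand equals $d-\alpha_i+1$, so $HF(S/J,d)=\max\bigl(0,\,N-(\mu-1)(d+1)\bigr)$. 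The top nonzero degree is the largest integer strictly below $\tfrac{N-\mu+1}{\mu-1}=\tfrac{N-\mu}{\mu-1}+\tfrac1{\mu-1}$; since the fractional part of $\tfrac{N-\mu}{\mu-1}$ is at most $\tfrac{\mu-2}{\mu-1}$, this integer equals $\lfloor\tfrac{N-\mu}{\mu-1}\rfloor=\Omega-1$. Hence $S/J$ has socle degree $\Omega-1$, with $HF(S/J,\Omega-1)=N-(\mu-1)\Omega=a$.

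\emph{Step 2 (shape of the resolution).} Since $S=\R[x,y]$ has global dimension $2$ and $J$ is a nonzero ideal, it is torsion-free of depth $\ge1$, so the Auslander--Buchsbaum formula gives $\mathrm{pd}(J)=1$ (equivalently $\mathrm{pd}(S/J)=2$, consistent with $S/J$ Artinian). A minimal free resolution of $J$ therefore reads $0\to F_1\to F_0\to J\to0$ with $F_1=\ker$ automatically free. Minimal generation by the $\mu$ forms $L_i^{\alpha_i}$ of degrees $\alpha_i$ gives $F_0=\bigoplus_{i=1}^\mu S(-\alpha_i)$, and comparing ranks forces $F_1=\bigoplus_{j=1}^{\mu-1}S(-\beta_j)$ of rank $\mu-1$ (consistently with Hilbert--Burch, under which the $L_i^{\alpha_i}$ are, up to a common scalar, the maximal minors of the presentation matrix, whence $\sum_j\beta_j=N$). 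It remains to show that $\beta_j\in\{\Omega,\Omega+1\}$ with exactly $a$ of them equal to $\Omega+1$.

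\emph{Step 3 (twists) and the main obstacle.} Freeness of $F_1$ gives $\dim(F_1)_e=\sum_j(e-\beta_j+1)_+$, while additivity of Hilbert functions in the resolution together with Step 1 gives the fully explicit
\[
\dim(F_1)_e=\sum_{i=1}^\mu(e-\alpha_i+1)_+-(e+1)+HF(S/J,e).
\]
Evaluating at $e=\Omega-1$ yields $\dim(F_1)_{\Omega-1}=(\mu-1)\Omega-N+a=0$, which forces $\min_j\beta_j\ge\Omega$; at $e=\Omega$ it yields $\dim(F_1)_\Omega=(\mu-1)(\Omega+1)-N=(\mu-1)-a$, which (since each $\beta_j\ge\Omega$) equals $\#\{j:\beta_j=\Omega\}$, so $\#\{j:\beta_j=\Omega\}=\mu-1-a$ and $\#\{j:\beta_j\ge\Omega+1\}=a$; and at $e=\Omega+1$ it yields $\dim(F_1)_{\Omega+1}=2(\mu-1)-a$, which, matched against $\sum_j(\Omega+2-\beta_j)_+$, leaves no room for any $\beta_j\ge\Omega+2$. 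Thus the resolution has the asserted form, and the value $a=N-(\mu-1)\Omega$ is exactly the formula in the statement. (Alternatively, the upper bound $\beta_j\le\Omega+1$ can be read off from $\mathrm{reg}(S/J)=\Omega-1$, the regularity of an Artinian module being its socle degree.) The delicate point is Step 1: invoking the inverse-systems / fat-point correspondence correctly and controlling the degenerate ranges (small $\alpha_i$, and the floor arithmetic for the socle degree) so that the clean Hilbert-function formula holds precisely on the window $\{\Omega-1,\Omega,\Omega+1\}$ used in Step 3; once that is secured, Steps 2 and 3 are bookkeeping over $\R[x,y]$.
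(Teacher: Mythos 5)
The paper does not prove this statement: Theorem~\ref{thm:FatPoints} is quoted verbatim from Geramita--Schenck~\cite{FatPoints} and used as a black box, so there is no internal proof for me to compare against. Your reconstruction does follow the same route as the cited source (Macaulay inverse systems reducing the Hilbert function of $S/J$ to ideals of fat points in $\mathbb{P}^1$, where every such ideal is principal, followed by reading off the single nontrivial syzygy module over $\R[x,y]$ from the Hilbert function). Steps 2 and 3 are sound: $\mathrm{pd}(J)=1$ by Auslander--Buchsbaum, the rank of $F_1$ is forced to be $\mu-1$, the vanishing of $\dim(F_1)_{\Omega-1}$ pins $\min_j\beta_j\ge\Omega$, the value at $\Omega$ counts the $\beta_j=\Omega$, and the value at $\Omega+1$ (or the regularity bound $\mathrm{reg}(S/J)=\Omega-1$) excludes any $\beta_j\ge\Omega+2$. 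Note also the statement as typeset in the paper has a typo ($t-1-a$ should read $\mu-1-a$), which you silently correct.

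The one place that needs tightening is the preliminary observation $\alpha_i\le\Omega$. As written, it invokes the socle-degree formula for the ideal $J''=\langle L_k^{\alpha_k}:k\ne i\rangle$ generated by the remaining $\mu-1$ forms. Beyond the fact that this is an implicit induction on $\mu$ (which is fine, with the base case $\mu=2$ handled trivially since then $\Omega=\alpha_1+\alpha_2-1$), there is the more serious issue that those $\mu-1$ forms need not \emph{minimally} generate $J''$, so the formula with $\mu-1$ generators does not directly give the socle degree of $S/J''$. The clean fix is to bypass the formula entirely: the fat-point Hilbert-function identity
\[
HF(S/J'',d)=\Bigl((d+1)-\sum_{k\ne i}(d-\alpha_k+1)_+\Bigr)_+
\]
from your Step 1 requires only that $J''$ be $\mathfrak m$-primary (which holds once $\mu\ge3$, since no two $L_k$ can be proportional under minimal generation of $J$). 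Evaluating it at $d=\alpha_i$ (taking $\alpha_i=\max_k\alpha_k$ WLOG) and using $\alpha_i\ge\Omega+1>\tfrac{N-1}{\mu-1}\ge\tfrac{N-\mu+2}{\mu-1}$ gives $HF(S/J'',\alpha_i)=0$, hence $L_i^{\alpha_i}\in J''$, contradicting minimality. With that patch your argument is complete and, to the best of my knowledge, matches the strategy of the cited reference.
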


\begin{cor}\label{cor:FatPoints}
Suppose $L_1,\ldots,L_\mu\in S=\R[x_0,\ldots,x_n]$ are linear forms which vanish on a common codimension $2$ linear subspace $W$.  Let $\alpha_1,\ldots,\alpha_\mu$ and $\Omega$ be as defined in Theorem~\ref{thm:FatPoints}, so that $L^{\alpha_1}_1,\ldots,L^{\alpha_\mu}_\mu$ are minimal generators for the ideal the generate.  Then $J$ has minimal free resolution
\[
0\rightarrow S(-\Omega-1)^a\oplus S(-\Omega)^{\mu-1-a} \rightarrow \oplus_{i=1}^t S(-\alpha_i) \rightarrow J \rightarrow 0,
\]
where $a=\sum_{i=1}^\mu \alpha_i+(1-\mu)\cdot\Omega$.
\end{cor}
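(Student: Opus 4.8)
The plan is to reduce to the two-variable situation of Theorem~\ref{thm:FatPoints} by a linear change of coordinates and then to extend scalars. Since $W\subset\R^{n+1}$ is a linear subspace of codimension two, after a linear change of coordinates on $S$ we may assume $W=V(x_0,x_1)$; set $R=\R[x_0,x_1]$, so that $S=R[x_2,\dots,x_n]$ is a polynomial ring over $R$ and, in particular, a free (hence flat) graded $R$-module. Each $L_i$ is a linear form vanishing on $W$, so it lies in the ideal $(x_0,x_1)$ and, being homogeneous of degree one, lies in $R$. Thus $J'=(L_1^{\alpha_1},\dots,L_\mu^{\alpha_\mu})$ is an ideal of $R$, and $J=J'S$; by flatness of $S$ over $R$ we have $J\cong J'\otimes_R S$.

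Next I would check the hypotheses of Theorem~\ref{thm:FatPoints} for $J'\subset R$. If $L_i$ and $L_j$ were proportional for some $i\ne j$, then one of $L_i^{\alpha_i},L_j^{\alpha_j}$ would divide the other, contradicting the assumption that $L_1^{\alpha_1},\dots,L_\mu^{\alpha_\mu}$ minimally generate $J$; hence the $L_i$ are pairwise non-proportional, and in particular not all multiples of a common linear form. Moreover $L_1^{\alpha_1},\dots,L_\mu^{\alpha_\mu}$ minimally generate $J'$ over $R$: since $S$ is a free $R$-module, $J'S/\mathfrak{m}_S J'S\cong J'/\mathfrak{m}_R J'$ as graded $\R$-vector spaces, so the minimal generators of $J$ and of $J'$ agree in number and degree. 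Theorem~\ref{thm:FatPoints} then supplies the graded minimal free resolution of $J'$,
\[
0\rightarrow R(-\Omega-1)^a\oplus R(-\Omega)^{\mu-1-a}\rightarrow \bigoplus_{i=1}^{\mu} R(-\alpha_i)\rightarrow J'\rightarrow 0,
\]
with $a=\sum_{i=1}^\mu\alpha_i+(1-\mu)\Omega$.

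Finally I would apply $-\otimes_R S$ to this resolution. Flatness of $S$ over $R$ keeps the resulting complex of free $S$-modules exact, and since $R(-j)\otimes_R S=S(-j)$ and $J'\otimes_R S=J$, it has precisely the shape claimed in the statement. Minimality is inherited: a matrix representing a differential in the resolution of $J'$ has all entries in $\mathfrak{m}_R=(x_0,x_1)$, and the same matrix represents the corresponding differential after base change, with entries still in $(x_0,x_1)\subseteq\mathfrak{m}_S$; equivalently, the graded Betti numbers are unchanged under the flat extension $R\hookrightarrow S$. Hence the displayed sequence is the graded minimal free resolution of $J$ over $S$. The only point requiring care is the bookkeeping around minimality---that neither the generators nor the first syzygies drop in number after extending scalars---and this is immediate from the freeness of $S$ over $R$, so I anticipate no genuine obstacle.
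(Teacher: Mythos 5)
Your proof is correct, and it takes the dual route to the one in the paper. The paper proves the corollary by cutting down from $S$ to the two-variable case: it chooses linear forms $l_1,\dots,l_{n-1}$ that miss $W$, asserts these form a regular sequence on $S/J$, and then uses the lemma that a finite free complex $F_\bullet$ resolving $S/J$ remains a resolution after killing a nonzerodivisor, inducting on $n-1$. You instead start from the two-variable resolution over $R=\R[x_0,x_1]$ and extend scalars along the flat (indeed free) inclusion $R\hookrightarrow S$, observing that $J=J'\otimes_R S$, that exactness and graded Betti numbers are preserved, and that the generating degrees of $J$ and $J'$ coincide via $J'S/\mathfrak m_S J'S\cong J'/\mathfrak m_R J'$. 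Both arguments rest on the same structural fact --- $S$ is a polynomial extension of $R$ --- but yours has the advantage of sidestepping the need to verify that the chosen linear forms actually are a regular sequence on $S/J$ (which, to be done carefully, would require identifying $\mathrm{Ass}(S/J)=\{I(W)\}$ or appealing to Cohen--Macaulayness, a point the paper's proof glosses over), as well as the need for the "exactness modulo a nonzerodivisor" lemma. The paper's approach, on the other hand, is slightly more self-contained in that it phrases everything over the ambient ring $S$ and doesn't require changing coordinates.
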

\begin{proof}
Choose linear forms $l_1,\ldots,l_{n-1}$ which do not vanish on $W$.  These form a regular sequence on $S/J$.  Cutting down by these to the case of Theorem~\ref{thm:FatPoints} yields the result.  The exact statement we need is the following:  let $f\in S_1$ be a linear form which is a nonzerodivisor on $S/J$.  Then $F_\bullet\rightarrow S/J$ is exact if and only if $F_\bullet/fF_\bullet\rightarrow (S/J+(f))$ is exact.  This is an easy consequence of the long exact sequence in homology induced by the short exact sequence $0\rightarrow S(-1)/J\xrightarrow{\cdot f} S/J \rightarrow S/(J+(f))\rightarrow 0$.  Now induct.
\end{proof}

Both question marks in Table~\ref{tbl:Dims} are resolved by applying Corollary~\ref{cor:FatPoints}.  We mainly need some notation to state the results.

Set $\alpha'(\tau)=\alpha(\tau)+1$.  For each $W\in L_{\Sigma,\Sigma^{-1}}$, let $\mu(W,\sigma)$ be the number of minimal generators of the ideal $\langle l^{\alpha'(\tau)}_\tau|\tau\in \left(\Sigma^{\ge 0}_{W,\sigma}\right)_n \rangle$ and $\beta(W,\sigma)=(\alpha'(\tau_1),\ldots,\alpha'(\tau_{\mu(W,\sigma)}))$ the exponent vector for a set of minimal generators.  Set
\[
\Omega(W,\sigma)=\left\lfloor\dfrac{\sum_{i=1}^{\mu(W,\sigma)} \alpha'(\tau_i) -\mu(W,\sigma)}{\mu(W,\sigma)-1} \right\rfloor+1.
\]
Also let $a(W,\sigma)=\sum_{\alpha'(\tau)\in \beta(W,\sigma)} \alpha'(\tau)+(1-\mu(W,\sigma))\cdot\Omega(W,\sigma)$ and $b(W,\sigma)=\mu(W,\sigma)-1-a(W,\sigma)$.  If $\Sigma_{W,\sigma}=\mbox{st}(\gamma)$, then replace $\mu(W,\sigma),\beta(W,\sigma),\Omega(W,\sigma),$ $a(W,\sigma),b(W,\sigma)$ by $\mu(\gamma),\beta(\gamma),\Omega(\gamma),a(\gamma),b(\gamma)$, respectively.  Now we can finish off Table~\ref{tbl:Dims}.

\begin{cor}\label{cor:ThirdCoeff}
Let $\Sigma\subset\R^{n+1}$ be a pure $(n+1)$-dimensional fan with smoothness parameters $\alpha$. Using the notation above, Table~\ref{tbl:Dims} may be completed as in Table~\ref{tbl:Dims2}.
\begin{table}[htp]
\renewcommand{\arraystretch}{2}%
\centering
\begin{tabular}{c|c}
Module & Hilbert Polynomial \\
\hline
$S^{f_{n+1}(\Sigma)}$ & $f_{n+1}(\Sigma)\binom{d+n}{n}$ \\
\hline
$\bigoplus\limits_{\tau\in\Sigma^{\ge 0}_n}\dfrac{S}{J(\tau)}$ & $\sum\limits_{\tau\in\Sigma^{\ge 0}_n} \binom{d+n}{n}-\binom{d+n-\alpha(\tau)-1}{n}$ \\
\hline
$\bigoplus\limits_{\gamma\in\Sigma^{\ge 0}_{n-1}}\dfrac{S}{J(\gamma)}$ & 
$
\begin{array}{c}
\sum\limits_{\gamma\in\Sigma^{\ge 0}_{n-1}} \left( \binom{d+n}{n}-\sum\limits_{\alpha'(\tau)\in\beta(\gamma)} \binom{d+n-\alpha'(\tau)}{n} \right. \\ \left. +a(\gamma)\binom{d+n-\Omega(\gamma)-1}{n}+b(\gamma)\binom{d+n-\Omega(\gamma)}{n} \vphantom{\sum\limits_{\alpha'(\tau)\in\beta(W,\sigma)}} \right)
\end{array}
$
\\[10 pt]
\hline
$H_n(\cR/\cJ[\Sigma,\Sigma^{-1}])$ & 
$
\begin{array}{c}
\sum\limits_{\substack{W\in L_{\Sigma,\Sigma^{-1}}\\ \dim W=n-1}} \sum\limits_{\substack{\sigma\in\Gamma_W\\ H_{n-1}(\cR[\Sigma_{W,\sigma},\Sigma_{W,\sigma}^{-1}])\neq 0}}\left( \binom{d+n}{n}  \vphantom{\sum\limits_{\alpha'(\tau)\in\beta(W,\sigma)}} \right. \\
 -\sum\limits_{\alpha'(\tau)\in\beta(W,\sigma)} \binom{d+n-\alpha'(\tau)}{n}
 +a(W,\sigma)\binom{d+n-\Omega(W,\sigma)-1}{n} \\ 
 \left. +b(W,\sigma)\binom{d+n-\Omega(W,\sigma)}{n} \vphantom{\sum\limits_{\alpha'(\tau)\in\beta(W,\sigma)}} \right) + O(d^{n-3})
\end{array}
$
\end{tabular}
\caption{}\label{tbl:Dims2}
\end{table}

\end{cor}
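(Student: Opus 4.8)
I sketch how the proof assembles the tools built up so far, row by row through Table~\ref{tbl:Dims2}. The first two rows require nothing new: $S^{f_{n+1}(\Sigma)}$ has Hilbert polynomial $f_{n+1}(\Sigma)\binom{d+n}{n}$, and for each $\tau\in\Sigma^{\ge 0}_n$ the one-step resolution $S(-\alpha(\tau)-1)\to S$ of $J(\tau)$ gives $HP(S/J(\tau),d)=\binom{d+n}{n}-\binom{d+n-\alpha(\tau)-1}{n}$, and we sum over $\tau$. Thus the work is in rows three and four, both of which reduce to computing Hilbert polynomials of quotients $S/I$ where $I$ is generated by powers of linear forms vanishing on a common codimension-two linear subspace --- exactly the situation governed by Corollary~\ref{cor:FatPoints}.

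For the third row, fix $\gamma\in\Sigma^{\ge 0}_{n-1}$. By definition $J(\gamma)=\langle l_\tau^{\alpha(\tau)+1}\mid \gamma\subset\tau\in\Sigma^{\ge 0}_n\rangle$; since $\gamma$ is an $(n-1)$-dimensional cone, $\mbox{aff}(\gamma)$ is a codimension-two linear subspace of $\R^{n+1}$ contained in $\mbox{aff}(\tau)$ for each such $\tau$, so every generator of $J(\gamma)$ vanishes on $\mbox{aff}(\gamma)$. Passing to a minimal generating set with exponent vector $\beta(\gamma)$ and $\mu(\gamma)$ generators, Corollary~\ref{cor:FatPoints} (when $\mu(\gamma)\ge 2$, so the generators involve at least two non-proportional linear forms) gives the graded minimal free resolution
\[
0\to S(-\Omega(\gamma)-1)^{a(\gamma)}\oplus S(-\Omega(\gamma))^{b(\gamma)}\to\bigoplus_{\alpha'(\tau)\in\beta(\gamma)}S(-\alpha'(\tau))\to J(\gamma)\to 0,
\]
and the alternating sum of the Hilbert polynomials of a free resolution of $S/J(\gamma)$ is precisely the third entry of Table~\ref{tbl:Dims2}; the degenerate case $\mu(\gamma)=1$ is the principal-ideal case and the formula collapses to the one-step expression. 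Summing over $\gamma\in\Sigma^{\ge 0}_{n-1}$ finishes the row.

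For the fourth row we take $i=n$ in Proposition~\ref{prop:HFHomMainTerm}: with $k=\dim H_n(\cR/\cJ[\Sigma,\Sigma^{-1}])\le n-1$ (Theorem~\ref{thm:assPrimes1}(1)) it gives $HP(H_n(\cR/\cJ[\Sigma,\Sigma^{-1}]),d)=\sum_{\dim W=k}HP(H_n(\cR/\cJ[\Sigma_W,\Sigma^{-1}_W]),d)+O(d^{k-2})$. If $k<n-1$ the whole polynomial is already $O(d^{n-3})$ and the (empty) sum over $(n-1)$-flats in the table is correct; if $k=n-1$ then for each such $W$ Theorem~\ref{thm:assPrimes2} writes $H_n(\cR/\cJ[\Sigma_W,\Sigma^{-1}_W])$ as the direct sum, over those $\sigma\in\Gamma_W$ with $H_n(\cR[\Sigma_{W,\sigma},\Sigma^{-1}_{W,\sigma}])\neq 0$, of the quotients $S/\bigl(\sum_{\tau\in(\Sigma^{\ge 0}_{W,\sigma})_n}J(\tau)\bigr)$. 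Each such summand is again $S$ modulo an ideal generated by powers of linear forms that all vanish on the codimension-two subspace $W$, so Corollary~\ref{cor:FatPoints} applies with the data $\mu(W,\sigma),\beta(W,\sigma),\Omega(W,\sigma),a(W,\sigma),b(W,\sigma)$ (or with $\mu(\gamma),\dots$ when $\Sigma_{W,\sigma}$ is the star of a face $\gamma$), and we read off its Hilbert polynomial as in the third row. Summing over the relevant $\sigma$ and over $W$ with $\dim W=n-1$, and absorbing the error from Proposition~\ref{prop:HFHomMainTerm} into $O(d^{n-3})$, yields the fourth entry.

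The content is really just the organized assembly of Corollary~\ref{cor:FatPoints}, Proposition~\ref{prop:HFHomMainTerm} and Theorem~\ref{thm:assPrimes2}; the delicate points are entirely bookkeeping, and this is where I expect the only real care to be needed. One must check in every instance that the linear forms in question genuinely vanish on a common codimension-two subspace and are not all proportional (so that Corollary~\ref{cor:FatPoints} applies, the $\mu=1$ cases being handled by the trivial one-step resolution), keep track of which summands survive in the decomposition of $H_n(\cR/\cJ[\Sigma_W,\Sigma^{-1}_W])$ (this is exactly what Theorem~\ref{thm:assPrimes2} supplies), and verify that the accumulated error term remains $O(d^{n-3})$ --- which is precisely the precision needed so that, via Equation~\eqref{eqn:EulerCharacteristic}, the four rows determine the first three coefficients of $HP(C^\alpha(\Sigma),d)$.
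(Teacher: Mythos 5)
Your proof is correct and follows the same route as the paper: rows one and two are immediate, row three is a direct application of Corollary~\ref{cor:FatPoints} to each $S/J(\gamma)$, and row four combines Proposition~\ref{prop:HFHomMainTerm} (to reduce to the lattice fans over the $(n-1)$-dimensional flats, up to $O(d^{n-3})$), Theorem~\ref{thm:assPrimes2} (to write $H_n(\cR/\cJ[\Sigma_W,\Sigma_W^{-1}])$ as a direct sum of quotients by ideals of powers of linear forms), and again Corollary~\ref{cor:FatPoints}. Two small remarks. First, you are more careful than the printed proof in flagging the $\mu(\gamma)=1$ degenerate case (principal ideal, so the quantities $\Omega,a,b$ are not used and the one-step resolution $S(-\alpha'(\tau))\to J(\gamma)$ is substituted) and the case $k<n-1$ (where the sum over $(n-1)$-flats is vacuous); both points deserve the mention you give them. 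Second, consistent with the hypotheses of Theorem~\ref{thm:assPrimes2} (where $W$ has dimension $d-1$ and the indicator is $H_d(\cR[\Sigma_{W,\sigma},\Sigma^{-1}_{W,\sigma}])\neq 0$), the selection condition in the fourth row should read $H_n(\cR[\Sigma_{W,\sigma},\Sigma_{W,\sigma}^{-1}])\neq 0$ as you write it; the index $H_{n-1}$ appearing in Table~\ref{tbl:Dims2} (and in the displayed equation of the paper's proof) looks like a shift-by-one typo, and your version is the one that actually matches Theorem~\ref{thm:assPrimes2}.
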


\begin{proof}
The third entry is a direct application of Corollary~\ref{cor:FatPoints} to the quotients $S/J(\gamma)$.  By Theorem~\ref{thm:assPrimes2} and Proposition~\ref{prop:HFHomMainTerm},
\begin{align*}
HP(H_{n-1}(\cR/\cJ[\Sigma,\Sigma^{-1}]),d)= & \sum\limits_{\substack{W\in L_{\Sigma,\Sigma^{-1}}, \dim W=n-1,\sigma\in\Gamma_W\\ H_{n-1}(\cR[\Sigma_{W,\sigma},\Sigma_{W,\sigma}^{-1}])\neq 0}} HP\left(\dfrac{S}{\sum_{\tau\in(\Sigma^{\ge 0}_{W,\sigma})_n} J(\tau)},d \right)\\
& + O(d^{n-3}).
\end{align*}
Recognizing $\sum_{\tau\in(\Sigma^{\ge 0}_{W,\sigma})_n} J(\tau)$ as the ideal $\langle l^{\alpha'(\tau)}_\tau|\tau\in \left(\Sigma^{\ge 0}_{W,\sigma}\right)_n \rangle$, we are done.
\end{proof}

At this point we can extract the first three coefficients of $HP(C^\alpha(\Sigma),d)$ by taking the appropriate alternating sums of the expressions in Corollary~\ref{cor:ThirdCoeff}.  Instead of doing this in full generality, we restrict to the case where $\Sigma\subset\R^3$, in which case we recover the full Hilbert polynomial.  The second and third entries in the table above give the constant term.

\begin{cor}\label{cor:HilbertPoly3DFan}
Let $\Sigma\subset\R^3$ be a pure, hereditary, $3$-dimensional fan with smoothness parameters $\alpha$.  Then the Hilbert polynomial $HP(C^\alpha(\Sigma),d)$ is given by
\[
f_{n+1}(\Sigma)\dbinom{d+2}{2}-\left(\sum\limits_{\tau\in\Sigma^{\ge 0}_2} \binom{d+2}{2}-\binom{d+2-\alpha(\tau)-1}{2}\right)+ \sum\limits_{\substack{W\in L_{\Sigma,\Sigma^{-1}}\\ \dim W=1}} c_W,
\]
where
\[
c_W=\sum_{\sigma\in\Gamma_W} c_{W,\sigma},
\]
and
\[
c_{W,\sigma}=\left\lbrace
\begin{array}{l}
1-\sum\limits_{\alpha'(\tau)\in\beta(\gamma)} \binom{\alpha'(\tau)-1}{2} + a(\gamma)\binom{\Omega(\gamma)}{2}+b(\gamma)\binom{\Omega(\gamma)-1}{2} \\
\hfill \mbox{ if } \Sigma_{W,\sigma}=\mbox{st}(\gamma),\gamma\in\Sigma^{\ge 0}_1 \\[10 pt]
1-\sum\limits_{\alpha'(\tau)\in\beta(W,\sigma)} \binom{\alpha'(\tau)-1}{2} + a(W,\sigma)\binom{\Omega(W,\sigma)}{2}+b(W,\sigma)\binom{\Omega(W,\sigma)-1}{2} \\
\hfill \mbox{ if } H_1(\cR[\Sigma_{W,\sigma},\Sigma^{-1}_{W,\sigma}])\neq 0\\[10 pt]
0 \hfill \mbox{ otherwise}
\end{array}
\right.
\]
\end{cor}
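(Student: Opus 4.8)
The plan is to read $HP(C^\alpha(\Sigma),d)$ off the Euler characteristic identity~\eqref{eqn:EulerCharacteristic} with $n+1=3$, discarding everything that vanishes after passing to the Hilbert polynomial. First I would observe that $H_0(\cR/\cJ[\Sigma,\Sigma^{-1}])=0$ (from $H_0(\cR[\Sigma,\Sigma^{-1}])=0$ in Proposition~\ref{prop:LowHom}, via the long exact sequence of $0\to\cJ\to\cR\to\cR/\cJ\to 0$), and that $H_1(\cR/\cJ[\Sigma,\Sigma^{-1}])$ has finite length, since by Theorem~\ref{thm:assPrimes1}(1) its associated primes lie among the $I(W)$ with $\dim W\le 0$. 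Hence $HP(H_0,d)=HP(H_1,d)=0$, and writing $\cR/\cJ_i$ for $\cR/\cJ[\Sigma,\Sigma^{-1}]_i$ and using $H_3(\cR/\cJ[\Sigma,\Sigma^{-1}])=C^\alpha(\Sigma)$ (Lemma~\ref{lem:SplinesTop}), Equation~\eqref{eqn:EulerCharacteristic} collapses to
\[
HP(C^\alpha(\Sigma),d)=HP(\cR/\cJ_3,d)-HP(\cR/\cJ_2,d)+HP(\cR/\cJ_1,d)-HP(\cR/\cJ_0,d)+HP(H_2(\cR/\cJ[\Sigma,\Sigma^{-1}]),d).
\]
The first two terms are immediate: $\cR/\cJ_3=S^{f_3(\Sigma)}$ gives $f_3(\Sigma)\binom{d+2}{2}$, and $\cR/\cJ_2=\bigoplus_{\tau\in\Sigma^{\ge 0}_2}S/\langle l_\tau^{\alpha(\tau)+1}\rangle$ gives $\sum_\tau\bigl(\binom{d+2}{2}-\binom{d+2-\alpha(\tau)-1}{2}\bigr)$ from the one-step resolution of a principal ideal; these are the first two summands of the claimed formula. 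The term $HP(\cR/\cJ_0,d)$ vanishes: $\cR/\cJ_0$ is either $0$ or $S/J(\mathbf 0)$, and in the latter case $\mathbf 0$ is a face of $\Sigma$, so $\Sigma$ has a pointed facet $\sigma$, all of whose codimension one faces lie in $\Sigma^{\ge 0}_2$ (else $\mathbf 0\in\Sigma^{-1}$) and whose defining linear forms span $S_1$ because the lineality space of $\sigma$ is trivial; thus $J(\mathbf 0)$ is $(x_0,x_1,x_2)$-primary and $S/J(\mathbf 0)$ is Artinian.

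Next I would handle $\cR/\cJ_1=\bigoplus_{\gamma\in\Sigma^{\ge 0}_1}S/J(\gamma)$. Given a ray $\gamma\in\Sigma^{\ge 0}_1$ and a facet $\sigma\supset\gamma$ (which is pointed, having the pointed face $\gamma$), the two codimension one faces $\tau_1,\tau_2$ of $\sigma$ along $\gamma$ have distinct affine spans, and neither lies in $\Sigma^{-1}$ (otherwise $\gamma\in\Sigma^{-1}_1$); hence $\mbox{aff}(\gamma)=\mbox{aff}(\tau_1)\cap\mbox{aff}(\tau_2)\in L_{\Sigma,\Sigma^{-1}}$, and $J(\gamma)$ is generated by powers of linear forms vanishing on the codimension two subspace $\mbox{aff}(\gamma)$, so $\dim S/J(\gamma)=1$. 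Moreover $\Sigma_{\mbox{aff}(\gamma),\sigma}=\mbox{st}(\gamma)$: $\supseteq$ is the remark after Definition~\ref{def:LatticePair}, and for $\subseteq$ one checks that any codimension one face $\tau$ of a facet containing $\gamma$ with $\mbox{aff}(\gamma)\subset\mbox{aff}(\tau)$ must contain $\gamma$ (since $\tau=\sigma\cap\mbox{aff}(\tau)$), so no chain as in Definition~\ref{def:LatticePair} can leave $\mbox{st}(\gamma)$; consequently $(\Sigma^{\ge 0}_{\mbox{aff}(\gamma),\sigma})_2=\{\tau\in\Sigma^{\ge 0}_2:\gamma\subset\tau\}$ and $J(\gamma)=\sum_{\tau\in(\Sigma^{\ge 0}_{\mbox{aff}(\gamma),\sigma})_2}J(\tau)$. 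Corollary~\ref{cor:FatPoints} then gives the graded minimal free resolution of $J(\gamma)$, and since $\dim S/J(\gamma)=1$ its Hilbert polynomial equals the constant term of the resulting Hilbert function; as $\binom{d+2-k}{2}$ has constant term $\binom{k-1}{2}$, this constant is exactly $c_{\mbox{aff}(\gamma),\sigma}$ in its first clause. Distinct rays give distinct pairs $(\mbox{aff}(\gamma),[\sigma])$, so $HP(\cR/\cJ_1,d)$ is the sum of all first-clause contributions to $\sum_W c_W$.

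For the homology term, $k:=\dim H_2(\cR/\cJ[\Sigma,\Sigma^{-1}])\le 1$ by Theorem~\ref{thm:assPrimes1}(1), so Proposition~\ref{prop:HFHomMainTerm} gives $HP(H_2,d)=\sum_{\dim W=1}HP(H_2(\cR/\cJ[\Sigma_W,\Sigma^{-1}_W]),d)$ (the error term vanishing when $k=1$, and both sides being $0$ when $k=0$). By Theorem~\ref{thm:assPrimes2} each summand equals $\bigoplus_\sigma S/\sum_{\tau\in(\Sigma^{\ge 0}_{W,\sigma})_n}J(\tau)$, the sum running over $\sigma\in\Gamma_W$ with $H_2(\cR[\Sigma_{W,\sigma},\Sigma^{-1}_{W,\sigma}])\neq 0$. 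By Definition~\ref{def:LatticePair}, $(\Sigma^{\ge 0}_{W,\sigma})_n$ contains only codimension one faces $\tau$ with $W\subset\mbox{aff}(\tau)$ (the rest lie in $\Sigma^c_W\subseteq\Sigma^{-1}_{W,\sigma}$), so each of these ideals is generated by powers of linear forms vanishing on the codimension two flat $W$; Corollary~\ref{cor:FatPoints} applies, the quotient has dimension one, and its Hilbert polynomial is $c_{W,\sigma}$ in its second clause. Hence $HP(H_2,d)$ is the sum of all second-clause contributions to $\sum_W c_W$.

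It remains to collect terms. By Theorem~\ref{thm:assPrimes2}(3) the pairs $(W,\sigma)$ not covered above — where $\Sigma_{W,\sigma}$ is a star but not of the form $\mbox{st}(\gamma)$ with $\gamma\in\Sigma^{\ge 0}_1$, or $G_W(\Sigma_{W,\sigma},\Sigma^{-1}_{W,\sigma})$ carries a vertex $v_b$ — have $c_{W,\sigma}=0$ and contribute nothing either to $\cR/\cJ_1$ or to $HP(H_2,d)$ (for the star case one uses $H_{\dim W+1}(\cR/\cJ[\Sigma_{W,\sigma},\Sigma^{-1}_{W,\sigma}])=0$, as in the proof of Theorem~\ref{thm:assPrimes1}(2)). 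Adding the contributions and using $HP(\cR/\cJ_0,d)=0$ yields $HP(\cR/\cJ_1,d)-HP(\cR/\cJ_0,d)+HP(H_2,d)=\sum_{\dim W=1}\sum_{\sigma\in\Gamma_W}c_{W,\sigma}=\sum_{\dim W=1}c_W$, which with the first two paragraphs gives the stated formula. The step I expect to be the main obstacle is precisely this last bookkeeping: showing the three clauses defining $c_{W,\sigma}$ are exhaustive and mutually exclusive, that the first clause is in natural bijection with the rays of $\Sigma^{\ge 0}_1$ (hence with the summands of $\cR/\cJ_1$) and the second with the lattice fans that actually contribute to $HP(H_2,d)$, and that in every nonzero clause the relevant ideal $\sum_\tau J(\tau)$ is generated by powers of linear forms on a common codimension two flat, so that Corollary~\ref{cor:FatPoints} produces exactly the displayed constant.
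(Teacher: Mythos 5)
Your proposal is correct and follows essentially the paper's route: specialize Equation~\eqref{eqn:EulerCharacteristic} to $n+1=3$, discard the finite-length contributions from $\cR/\cJ_0$, $H_0$, and $H_1$, feed the remaining terms through Corollary~\ref{cor:FatPoints}, Proposition~\ref{prop:HFHomMainTerm}, and Theorem~\ref{thm:assPrimes2}, and read off the constant term of each resulting Hilbert polynomial from the free resolution. The paper's own proof is a one-sentence appeal to ``taking the appropriate alternating sums of the expressions in Corollary~\ref{cor:ThirdCoeff},'' so the bookkeeping you supply --- that the first clause of $c_{W,\sigma}$ is in natural bijection with rays $\gamma\in\Sigma^{\ge 0}_1$ and hence with the summands of $\cR/\cJ_1$, that the second clause exhausts the lattice fans contributing to $HP(H_2(\cR/\cJ[\Sigma,\Sigma^{-1}]),d)$, and that the third is automatically zero --- is precisely the work the paper elides. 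One substantive remark: you correctly write the second-clause condition as $H_2(\cR[\Sigma_{W,\sigma},\Sigma^{-1}_{W,\sigma}])\neq 0$; the $H_1$ appearing in the statement (and the $H_{n-1}$ in the table of Corollary~\ref{cor:ThirdCoeff}) must be a typo for $H_n=H_2$, since Theorem~\ref{thm:assPrimes2} with $\dim W = 1$ invokes $H_2$, and Proposition~\ref{prop:GHom} forces $H_1(\cR[\Sigma_{W,\sigma},\Sigma^{-1}_{W,\sigma}])=0$ in every relevant case, so the condition as printed would be vacuous.
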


\begin{exm}
Let $\Sigma$ be the cone over the Schlegel diagram of a cube (Example~\ref{ex:SC}).  Impose vanishing of order $r$ along interior codimension one faces and vanishing of order $s$ along boundary codimension one faces.  If $s=-1$, i.e. no vanishing is imposed along codimension one boundary faces, then the only lattice fan with nontrivial $H_1$ is the lattice fan $\Sigma_{W,\sigma}$ where $W$ is the $z$-axis, shown in Figure~\ref{fig:SCCentralLinePF}.  The ideal generated by the forms $l_\tau$, $\tau\in\Sigma^{\ge 0}_1$, vanishing on $W$ is a complete intersection with two generators in degree $r+1$.  We have $\Omega(W,\sigma)=2r+1$, $a(W,\sigma)=1$, $b(W,\sigma)=0$, hence $c_{W,\sigma}=1-2\binom{r}{2}+\binom{2r+1}{2}$.  The ideal of every interior ray $\gamma$ of $\Sigma$ is generated by three forms of degree $r+1$, so $\Omega(\gamma)=\lfloor 3r/2 \rfloor +1$, $a(\gamma)=3r+3-2\lfloor (3r+2)/2 \rfloor, b(\gamma)=2\lfloor (3r+2)/2 \rfloor -3r-1$.  Using Corollary~\ref{cor:HilbertPoly3DFan} and simplifying, we have
\[
\begin{array}{rl}
HP(C^\alpha(\Sigma),d)=& \frac{5}{2}  d^2+\left(-8 r-\frac{1}{2}\right)d \\
 &-4 \left\lfloor \frac{3
   r}{2}\right\rfloor ^2+12 r \left\lfloor \frac{3 r}{2}\right\rfloor
   -r^2+4 r+2
\end{array}
\]
Now suppose $s\ge 0$, so vanishing of degree $s$ is imposed along the boundary of $\Sigma$.  In addition to the lattice fan around the $z$-axis, there are two others corresponding to the $x$ and $y$-axes (see Figure~\ref{fig:SCYLine}) for which $H_1(\cR[\Sigma,\Sigma^{-1}])$ is nontrivial.  The corresponding ideals are generated by two forms of degree $s+1$ and two forms of degree $r+1$.  For each of these we have $\Omega=\lfloor 2(r+s)/3 \rfloor+1$, $a=2(r+s)+1-3\lfloor 2(r+s)/3 \rfloor$, $b=3\lfloor 2(r+s)/3 \rfloor-2(r+s)+2$.  In addition to the interior rays, we also must incorporate the four boundary rays of $\Sigma_1$.  The corresponding ideals of these rays are generated by two forms of degree $s+1$ and a form of degree $r+1$.  For each of these we have $\Omega=\lfloor (2s+r)/2 \rfloor$, $a=2s+r+1-2\lfloor (2s+r)/2 \rfloor$, $b=1-r-2s+2\lfloor (2s+r)/2 \rfloor$.  Using Corollary~\ref{cor:HilbertPoly3DFan} and simplifying, we have
\[
\begin{array}{rl}
HP(C^\alpha(\Sigma),d)= & \frac{5}{2} d^2+\left(-8 r-4 s-\frac{9}{2}\right) d \\
&-3 \left\lfloor \frac{2(r+s)}{3}\right\rfloor ^2+4 r \left\lfloor \frac{2
   (r+s)}{3}\right\rfloor +4 s \left\lfloor \frac{2 (r+s)}{3}\right\rfloor
   -\left\lfloor \frac{2 (r+s)}{3}\right\rfloor \\
   & - 4 \left\lfloor \frac{r}{2}\right\rfloor ^2-4 \left\lfloor \frac{3 r}{2}\right\rfloor
   ^2+4 r \left\lfloor \frac{r}{2}\right\rfloor +12 r \left\lfloor \frac{3
   r}{2}\right\rfloor\\
   & -5 r^2+4 r s+8 r+4 s+4
\end{array}
\]
This formula is correct when the number of generators of the ideals above are as indicated in the preceding paragraph.  When one of $r,s$ is small compared to the other, then the number of minimal generators of the above ideals may drop, which will change the formula.  For instance, when $r=3$ and $s=0$, the above formula gives a constant term of $81$, while the actual constant is $87$.  This is because the minimal number of generators of several of the ideals drops for these values.
\end{exm}

\begin{remark}
In~\cite{D2}, Mourrain and Villamizar bound the dimension of the homology module $H_2(\cR/\cJ[\Sigma,\partial\Sigma])$ in the case of uniform smoothness, where $\Sigma=\wDelta$ for $\Delta\subset\R^2$ a simplicial complex.  In the simplicial case this module has finite length and so vanishes in high degree.  Their are no known formulas for $\dim H_2(\cR/\cJ[\Sigma,\partial\Sigma])_d$ in small degrees $d$.  We revisit this module in the next section.
\end{remark}

\begin{remark}
In another paper we address the question of how large $d$ must be in order for the formula in Corollary~\ref{cor:HilbertPoly3DFan} to hold.  We obtain a combinatorial bound for such $d$ which is valid for any fan $\Sigma\subset\R^3$.  This is the first such bound obtained for arbitrary polyhedral fans.  In the simplicial case we can tighten this bound and, reducing to the case of uniform smoothness, recover the $3r+2$ bound of Ibrahim and Schumaker~\cite{SuperSpline}.  This is the best known bound for arbitrary planar simplicial complexes, although Alfeld and Schumaker have reduced this bound to $3r+1$ in the case of \textit{generic} simplicial complexes~\cite{AS3r}.
\end{remark}

\section{Simplicial Fourth Coefficient and the Generic Dimension of $C^1$ Tetrahedral Splines}\label{sec:Fourth}
In this section we consider the computation of the Hilbert polynomial of $C^r(\Sigma)$, where $\Sigma=\wDelta$ and $\Delta\subset\R^3$.  We then revisit the computation by Alfeld, Schumaker, and Whiteley of $\dim C^1(\wDelta)_d$ for $d\ge 8$~\cite{ASWTet}.

As in the previous section, we start by describing the modules of relevant dimension for the computation of $HP(C^r(\Sigma),d)$, referring back to Equation~\ref{eqn:EulerCharacteristic}.  We leave out $H_2(\cR/\cJ[\Sigma,\Sigma^{-1}])$ since by Theorem~\ref{thm:assPrimes1} it has finite length and will not contribute to the Hilbert polynomial.

\begin{table}[htp]
\renewcommand{\arraystretch}{2}%
\centering
\begin{tabular}{c|c|c}
Dimension & Module & Hilbert Polynomial \\
\hline
$4$ & $S^{f_4(\Sigma)}$ & $f_4(\Sigma)\binom{d+3}{3}$ \\
\hline
$3$ & $\bigoplus\limits_{\tau\in\Sigma^{\ge 0}_3}\dfrac{S}{J(\tau)}$ & $\sum\limits_{\tau\in\Sigma^{\ge 0}_3} \binom{d+3}{3}-\binom{d+3-\alpha(\tau)-1}{3}$ \\
\hline
$2$ & $\bigoplus\limits_{e\in\Sigma^{\ge 0}_2}\dfrac{S}{J(e)}$ & see Table~\ref{tbl:Dims2}\\
\hline
$1$ & $\bigoplus\limits_{v\in\Sigma^{\ge 0}_1}\dfrac{S}{J(v)}$ & ? \\
\hline
$1$ & $H_3(\cR/\cJ[\Sigma,\Sigma^{-1}])$ & ?
\end{tabular}
\caption{}\label{tbl:Dims3D}
\end{table}

If we approach the first question mark appearing in Table~\ref{tbl:Dims3D} as we did ideals of codimension $2$ faces in the previous section, we would transfer the problem to one of computing Hilbert functions of ideals of fat points in $\mathbb{P}^2$.  We refer the reader to~\cite{D3}, where Mourrain and Villamizar bound the dimension of this piece using the Fr\"oberg sequence in the case of uniform smoothness.  Our main contribution to this story is to elucidate the term $H_3(\cR/\cJ[\Sigma,\Sigma^{-1}])$, the second question mark in Table~\ref{tbl:Dims3D}.  

\begin{prop}\label{prop:SimplicialH3MainTerm}
Let $\Delta\subset\R^3$ be a simplicial complex with smoothness parameters $\alpha$,$\Sigma=\wDelta$, and for $v\in\Delta_0$ let $\Delta_v\subset\R^3$ be the fan with smoothness parameters as in Lemma~\ref{lem:StarProjection}.  Let $S=\R[x_0,x_1,x_2,x_3]$ and $R=\R[x_1,x_2,x_3]$.  Then $HP(H_3(\cR/\cJ[\Sigma,\Sigma^{-1}]),d)$ is the constant given by
\[
HP(H_3(\cR/\cJ[\Sigma,\Sigma^{-1}]),d)=\sum\limits_{\substack{v\in \Delta_0\\ \widehat{v}\in L_{\Sigma,\Sigma^{-1}}}} \sum_{i\ge 0}\dim H_2(\cR/\cJ[\Delta_v,\Delta_v^{-1}])_i.
\]
\end{prop}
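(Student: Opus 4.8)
The plan is to identify the constant $HP(H_3(\cR/\cJ[\Sigma,\Sigma^{-1}]),d)$ with a sum of local lengths at the one-dimensional associated primes of $H_3(\cR/\cJ[\Sigma,\Sigma^{-1}])$, and to evaluate each of those lengths by means of Lemma~\ref{lem:StarProjection}. Put $M=H_3(\cR/\cJ[\Sigma,\Sigma^{-1}])$. By Theorem~\ref{thm:assPrimes1}(3) (with $i=3$, using that $\Sigma$ is simplicial) every associated prime of $M$ has the form $I(\gamma)$ with $\gamma\in\Sigma$, $\mbox{aff}(\gamma)\in L_{\Sigma,\Sigma^{-1}}$ and $\dim\gamma\le 1$; in particular $\dim M\le 1$, so $HP(M,d)$ is a constant. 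Since $HP(S/I(W),d)=1$ for a line $W$ through the origin, the standard relation between the Hilbert polynomial of a $1$-dimensional graded module and the lengths of its localizations at the $1$-dimensional (associated) primes gives
\[
HP(M,d)=\sum_{\substack{P\in\mbox{Ass}_S(M)\\ \dim S/P=1}}\mbox{length}_{S_P}(M_P).
\]
The primes appearing here are exactly the ideals $I(\widehat{v})$, for vertices $v\in\Delta_0$ with $\widehat{v}\in L_{\Sigma,\Sigma^{-1}}$, at which $M$ does not vanish (a vertex with $\widehat{v}\notin L_{\Sigma,\Sigma^{-1}}$ cannot occur). Hence it suffices to prove, for each such $v$, that $\mbox{length}_{S_{I(\widehat{v})}}(M_{I(\widehat{v})})=\sum_{i\ge 0}\dim H_2(\cR/\cJ[\Delta_v,\Delta_v^{-1}])_i$.

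Fix $v$ with $\widehat{v}\in L_{\Sigma,\Sigma^{-1}}$ and put $W=\mbox{aff}(\widehat{v})$, a one-dimensional flat. By Lemma~\ref{lem:Localize}, $M_{I(\widehat{v})}=\bigoplus_{\sigma\in\Gamma_W}H_3(\cR/\cJ[\Sigma_{W,\sigma},\Sigma^{-1}_{W,\sigma}])_{I(\widehat{v})}$, and by Lemma~\ref{lem:FanStar} every $\Sigma_{W,\sigma}$ is a star $\mbox{st}_\Sigma(\gamma)$ with $W\subseteq\mbox{aff}(\gamma)$. I claim that the only summand surviving localization is the one with $\Sigma_{W,\sigma}=\mbox{st}_\Sigma(\widehat{v})$. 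For a facet $\sigma\ni\widehat{v}$: since $\Sigma$ is simplicial, the facet of $\sigma$ opposite $\widehat{v}$ spans a hyperplane not containing $W$, so every $\sim_W$-walk out of $\sigma$ crosses only codimension one faces containing $\widehat{v}$ and therefore stays inside $\mbox{st}_\Sigma(\widehat{v})$; as $\mbox{st}_\Sigma(\widehat{v})\subseteq\Sigma_{W,\sigma}$ always holds, $\Sigma_{W,\sigma}=\mbox{st}_\Sigma(\widehat{v})$. For every other component $\Sigma_{W,\sigma}=\mbox{st}_\Sigma(\gamma)$ the ray $\widehat{v}$ is not a face of $\mbox{st}_\Sigma(\gamma)$; since every flat of the arrangement $\A(\mbox{st}_\Sigma(\gamma),\Sigma^{-1}_{W,\sigma})$ contains $W$, Theorem~\ref{thm:assPrimes1}(3) forces any associated prime of $H_3(\cR/\cJ[\mbox{st}_\Sigma(\gamma),\Sigma^{-1}_{W,\sigma}])$ to be an $I(\delta)$ with $\mbox{aff}(\delta)=W$, hence $\delta=\widehat{v}$, which is impossible; so that module vanishes. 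Consequently $M_{I(\widehat{v})}=H_3(\cR/\cJ[\mbox{st}_\Sigma(\widehat{v}),(\mbox{st}_\Sigma(\widehat{v}))^{-1}])_{I(\widehat{v})}$, where (again using simpliciality) the boundary subcomplex $(\mbox{st}_\Sigma(\widehat{v}))^{-1}=(\Sigma^c_W\cup\Sigma^{-1})\cap\mbox{st}_\Sigma(\widehat{v})$ inherited from $\Sigma^{-1}$ is exactly the one for which Lemma~\ref{lem:StarProjection} applies with $\PC=\mbox{st}_\Delta(v)$. That lemma gives $\cR/\cJ[\mbox{st}_\Sigma(\widehat{v}),(\mbox{st}_\Sigma(\widehat{v}))^{-1}]\cong\cR/\cJ[\Delta_v,\Delta_v^{-1}](-1)\otimes_R S$, and since $S$ is $R$-flat and the shift lowers homological degree by one,
\[
M_{I(\widehat{v})}\cong\bigl(H_2(\cR/\cJ[\Delta_v,\Delta_v^{-1}])\otimes_R S\bigr)_{I(\widehat{v})}.
\]

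Let $N=H_2(\cR/\cJ[\Delta_v,\Delta_v^{-1}])$. Theorem~\ref{thm:assPrimes1}(3) applied to the simplicial fan $\Delta_v\subset\R^3$ with $i=2$ gives $\mbox{Ass}_R(N)\subseteq\{(x_1,x_2,x_3)\}$, so $N$ has finite length over $R$, equal to $\dim_\R N=\sum_i\dim N_i$. In the coordinates of Lemma~\ref{lem:StarProjection} one has $S=R[x_0]$ and $I(\widehat{v})=(x_1,x_2,x_3)S$. Choosing a composition series $0=N_0\subset\cdots\subset N_\ell=N$ of $R$-modules with quotients $\cong R/(x_1,x_2,x_3)$, tensoring with the free $R$-module $S=R[x_0]$, and localizing at $I(\widehat{v})$ produces a filtration of $(N\otimes_R S)_{I(\widehat{v})}$ whose successive quotients are each isomorphic to the residue field of $S_{I(\widehat{v})}$; hence $\mbox{length}_{S_{I(\widehat{v})}}((N\otimes_R S)_{I(\widehat{v})})=\ell=\dim_\R N$. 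Together with the previous paragraph and summation over $v$ this yields the stated formula; vertices with $\widehat{v}\in L_{\Sigma,\Sigma^{-1}}$ but $N=0$ contribute $0$ to both sides.

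The main obstacle is the reduction carried out in the second paragraph: determining precisely which components of the lattice fan $\Sigma_W$ survive localization at $I(\widehat{v})$. Simpliciality of $\Sigma$ is used twice --- once to force $\Sigma_{W,\sigma}=\mbox{st}_\Sigma(\widehat{v})$ for facets $\sigma$ containing $\widehat{v}$, and once, through Theorem~\ref{thm:assPrimes1}(3), to kill the homology of the ``unexpected'' components $\mbox{st}_\Sigma(\gamma)$ whose faces merely span $W$ without $\widehat{v}$ itself being a face. A secondary technical point is checking that the boundary data inherited from $\Sigma^{-1}$ on $\mbox{st}_\Sigma(\widehat{v})$ is exactly the data producing $\Delta_v^{-1}$ in Lemma~\ref{lem:StarProjection}, so that the homology identification is with $H_2(\cR/\cJ[\Delta_v,\Delta_v^{-1}])$ for the $\Delta_v^{-1}$ named in the statement rather than for some other boundary subfan.
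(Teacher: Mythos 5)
Your proof is correct, and it reaches the result by a genuinely different route from the paper's. The paper invokes Proposition~\ref{prop:HFHomMainTerm} to rewrite $HP(H_3(\cR/\cJ[\Sigma,\Sigma^{-1}]),d)$ as a sum over one-dimensional flats $W$ of $HP(H_3(\cR/\cJ[\Sigma_W,\Sigma_W^{-1}]),d)$, then uses Lemma~\ref{lem:FanStar} to identify each component $\Sigma_{W,\sigma}$ as a star $\mbox{st}_\Sigma(\gamma)$, kills those with $\dim\gamma\ge 2$ by the vanishing in the proof of Theorem~\ref{thm:assPrimes1}(3), and finishes with Lemma~\ref{lem:StarProjection} and the fact that $(N\otimes_R S)_d\cong\bigoplus_{i\le d}N_i$. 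You bypass Proposition~\ref{prop:HFHomMainTerm} entirely and instead use the associated-graded/multiplicity formula: since Theorem~\ref{thm:assPrimes1}(3) forces $\dim H_3\le 1$ with linear minimal primes, the constant $HP(H_3,d)$ equals $\sum_P\mbox{length}_{S_P}((H_3)_P)$ over codimension-$3$ primes $P$, and you compute each local length by the same reduction to $\Delta_v$, finishing with a composition-series argument instead of the $\bigoplus_{i\le d}N_i$ identity. The trade-off is the expected one: your approach is more elementary and self-contained for this particular statement (it does not require proving or citing the auxiliary degree-bound proposition), while the paper's approach reuses a tool (Proposition~\ref{prop:HFHomMainTerm}) that applies uniformly in all codimensions and is also used in the previous section for the third coefficient.

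Two small remarks. First, your identification of the surviving component $\Sigma_{W,\sigma}=\mbox{st}_\Sigma(\widehat{v})$ and the vanishing of the ``unexpected'' components $\mbox{st}_\Sigma(\gamma)$ with $\dim\gamma\ge 2$ is correct, but you phrase it as a statement about what survives localization at $I(\widehat{v})$ when in fact your argument shows those modules are zero outright (exactly as the paper does); the distinction is harmless here. Second, when applying Theorem~\ref{thm:assPrimes1}(3) to the pair $(\mbox{st}_\Sigma(\gamma),\Sigma^{-1}_{W,\sigma})$ you should be explicit that in the simplicial case the interior codimension-one faces of $\mbox{st}_\Sigma(\gamma)$ all contain $\gamma$, so $\Sigma^c_W\cap\mbox{st}_\Sigma(\gamma)$ is contained in $\partial\mbox{st}_\Sigma(\gamma)$ and hence $\Sigma^{-1}_{W,\sigma}$ is a legitimate vanishing subcomplex of the boundary to which the theorem applies; you gesture at this but it is worth saying.
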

Note that the sum $\sum_{i\ge 0}\dim H_2(\cR/\cJ[\Delta_v,\Delta_v^{-1}])_i$ is finite since $H_2(\cR/\cJ[\Delta_v,\Delta_v^{-1}])$ is a module of finite length by Theorem~\ref{thm:assPrimes1}.
\begin{proof}
Applying Proposition~\ref{prop:HFHomMainTerm} yields
\[
HP(H_3(\cR/\cJ[\Sigma,\Sigma^{-1}]),d)=\sum\limits_{\substack{v\in L_{\Sigma,\Sigma^{-1}}\\ \dim(v)=1}} HP(H_3(\cR/\cJ[\Sigma_v,\Sigma_v^{-1}]),d),
\]
where $\Sigma_v=\bigsqcup_{\sigma\in\Gamma_v} \Sigma_{v,\sigma}$.  By Lemma~\ref{lem:FanStar}, $\Sigma_{v,\sigma}=\mbox{st}_\Sigma(\gamma)$ for some face $\gamma\in\Sigma$.  If $\dim(\gamma)\ge 2$, then $\Sigma_{v,\sigma}=\Sigma_{W,\sigma}$, where $W=\mbox{aff}(\gamma)$.  Then $H_3(\cR/\cJ[\Sigma_{v,\sigma},\Sigma^{-1}_{v,\sigma}])=H_3(\cR/\cJ[\Sigma_{W,\sigma},\Sigma^{-1}_{W,\sigma})=0$ by the same as in the proof of Theorem~\ref{thm:assPrimes1}.  So we have
\[
HP(H_3(\cR/\cJ[\Sigma,\Sigma^{-1}]))=\sum\limits_{\substack{v\in \Delta_0\\ \widehat{v}\in L_{\Sigma,\Sigma^{-1}}}} HP(H_3(\cR/\cJ[\mbox{st}_\Sigma(\widehat{v}),\mbox{st}_\Sigma(\widehat{v})^{-1}]),d).
\]
Since $\mbox{st}_\Sigma(\widehat{v})=\widehat{\mbox{st}_\Delta(v)}$, we conclude by Lemma~\ref{lem:StarProjection} that
\[
\cR/\cJ[\mbox{st}_\Sigma(\widehat{v}),\mbox{st}_\Sigma(\widehat{v})^{-1}]=\cR/\cJ[\Delta_v,\Delta^{-1}_v](-1)\otimes_R S.
\]
In particular, 
\[
H_3(\cR/\cJ[\mbox{st}_\Sigma(\widehat{v}),\mbox{st}_\Sigma(\widehat{v})^{-1}])=H_2(\cR/\cJ[\Delta_v,\Delta^{-1}_v])\otimes_R S.
\]
Now the result follows, since, if $M$ is a finitely generated graded $R$-module, there is an isomorphism between the vector spaces $\bigoplus_{i\le d} M_i$ and $(M\otimes S)_d.$
\end{proof}

\begin{cor}\label{cor:3DSimplicialHilbert}
Let $\Delta\subset\R^3$ be a pure, hereditary simplicial complex, with smoothness parameters $\alpha$, and let $\Sigma=\wDelta$.  Set
\[
\begin{array}{rl}
\chi(\cR/\cJ[\Sigma,\Sigma^{-1}],d)= & \sum\limits_{i=0}^4 (-1)^i\left( \sum\limits_{\gamma\in\Sigma^{\ge 0}_{4-i}} HF\left(\dfrac{S}{J(\gamma)},d \right)\right)\\
\chi_H(\cR/\cJ[\Sigma,\Sigma^{-1}],d)= & \sum\limits_{i=0}^3 (-1)^i\left( \sum\limits_{\gamma\in\Sigma^{\ge 0}_{4-i}} HP\left(\dfrac{S}{J(\gamma)},d \right)\right)
\end{array}
\]
Then
\[
HP(C^\alpha(\Sigma),d)=\chi_H(\cR/\cJ[\Sigma,\Sigma^{-1}],d)+C,
\]
where
\[
\begin{array}{rl}
C= & \sum\limits_{\substack{v\in \Delta_0\\ \widehat{v}\in L_{\Sigma,\Sigma^{-1}}}} \sum_{i\ge 0}\dim H_2(\cR/\cJ[\Delta_v,\Delta_v^{-1}])_i\\
= & \sum\limits_{\substack{v\in \Delta_0\\ \widehat{v}\in L_{\Sigma,\Sigma^{-1}}}} \sum_{i\ge 0} (\dim C^\alpha(\Delta_v)_i-\chi(\cR/\cJ[\Delta_v,\Delta^{-1}_v],i))
\end{array}
\]
\end{cor}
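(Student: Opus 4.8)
The proof will assemble \eqref{eqn:EulerCharacteristic}, Theorem~\ref{thm:assPrimes1}, and Proposition~\ref{prop:SimplicialH3MainTerm}. Applying \eqref{eqn:EulerCharacteristic} to $\Sigma=\wDelta\subset\R^4$ and expanding the signs gives
\[
HF(C^\alpha(\Sigma),d)=\chi(\cR/\cJ[\Sigma,\Sigma^{-1}],d)+HF(H_3,d)-HF(H_2,d)+HF(H_1,d)-HF(H_0,d),
\]
where I abbreviate $H_j=H_j(\cR/\cJ[\Sigma,\Sigma^{-1}])$. The plan is to pass to $d\gg 0$, discard every right-hand term that is eventually zero, replace what remains of $\chi$ by $\chi_H$, and let Proposition~\ref{prop:SimplicialH3MainTerm} handle the surviving $HF(H_3,d)$.

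First I would verify that $\chi$ and $\chi_H$ agree for $d\gg 0$. The only difference is the homological-degree-zero summand $\sum_{\gamma\in\Sigma^{\ge 0}_0}HF(S/J(\gamma),d)$ of $\chi$: this is the empty sum whenever some codimension one face carries $\alpha=-1$ (so that the cone point $\mathbf 0\in\Sigma^{-1}$), and otherwise equals $HF(S/J(\mathbf 0),d)$ with $J(\mathbf 0)=\sum_{\tau\in\Sigma_n}\langle l_\tau^{\alpha(\tau)+1}\rangle$ running over every codimension one face. Since the cones of $\wDelta$ are pointed, the affine hulls of the codimension one faces of a single facet already meet only in $\{\mathbf 0\}$, so $S/J(\mathbf 0)$ has finite length; either way this term dies for $d\gg 0$. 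Next, $H_0=0$ from the long exact sequence of $0\to\cJ\to\cR\to\cR/\cJ\to 0$ together with Proposition~\ref{prop:LowHom}; and since $\Sigma$ is simplicial, Theorem~\ref{thm:assPrimes1}(3) forces the associated primes of $H_1$ to lie on faces of dimension $\le -1$ (hence $H_1=0$) and those of $H_2$ to lie on the cone point (hence $H_2$ has finite length). Therefore for $d\gg 0$,
\[
HF(C^\alpha(\Sigma),d)=\chi_H(\cR/\cJ[\Sigma,\Sigma^{-1}],d)+HF(H_3,d),
\]
and Proposition~\ref{prop:SimplicialH3MainTerm} identifies $HF(H_3,d)=HP(H_3,d)$ with the constant $C$ given by the first displayed expression. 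As both sides are polynomials in $d$ agreeing for $d\gg 0$, they are equal: $HP(C^\alpha(\Sigma),d)=\chi_H(\cR/\cJ[\Sigma,\Sigma^{-1}],d)+C$.

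For the second expression for $C$, I would apply \eqref{eqn:EulerCharacteristic} once more, now to each $\Delta_v\subset\R^3$ with $\widehat{v}\in L_{\Sigma,\Sigma^{-1}}$ — a pure, hereditary, $3$-dimensional simplicial fan, since the star of a vertex of a hereditary pseudomanifold is again one. Writing $H'_j=H_j(\cR/\cJ[\Delta_v,\Delta_v^{-1}])$, this gives for every degree $i$
\[
\dim C^\alpha(\Delta_v)_i-\chi(\cR/\cJ[\Delta_v,\Delta_v^{-1}],i)=\dim(H'_2)_i-\dim(H'_1)_i+\dim(H'_0)_i.
\]
Now $H'_0=0$ again by the long exact sequence and Proposition~\ref{prop:LowHom}, and $H'_1=0$ by Theorem~\ref{thm:assPrimes1}(3) applied to the simplicial fan $\Delta_v$ (its associated primes would have to sit on faces of dimension $\le -1$). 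Hence the left-hand side equals $\dim(H'_2)_i$ for all $i$; summing over $i\ge 0$ — a finite sum, since $H'_2$ has finite length by Theorem~\ref{thm:assPrimes1}, as in Proposition~\ref{prop:SimplicialH3MainTerm} — and then over the relevant $v$ reproduces the first expression for $C$, which establishes the second.

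The essential input is Proposition~\ref{prop:SimplicialH3MainTerm}; the remainder is bookkeeping about low-degree homology and the cone point, and care is needed only in the two spots where finite length is not enough on its own. First, to know that $\chi_H$ is the polynomial part of $\chi$ one genuinely needs $S/J(\mathbf 0)$ to have finite length, which is where pointedness of the cones of $\wDelta$ is used. Second, for the second expression one needs $H'_0=H'_1=0$ in \emph{every} degree — not merely that these modules have finite length — so that the untruncated sum $\sum_{i\ge 0}$ is the right one; this is why I invoke Proposition~\ref{prop:LowHom} and part~(3), rather than part~(1), of Theorem~\ref{thm:assPrimes1}.
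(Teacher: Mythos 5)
Your proof is correct and follows the paper's own route: plug the Euler-characteristic identity \eqref{eqn:EulerCharacteristic} into $\Sigma=\wDelta$, kill $H_0$, $H_1$ and (eventually) $H_2$ using Proposition~\ref{prop:LowHom} and Theorem~\ref{thm:assPrimes1}(3), invoke Proposition~\ref{prop:SimplicialH3MainTerm} for the surviving $H_3$ term, and then re-apply \eqref{eqn:EulerCharacteristic} to each $\Delta_v$ to obtain the second expression for $C$. You supply more bookkeeping than the paper's one-line proof — in particular spelling out why the cone-point term makes $\chi$ and $\chi_H$ agree for $d\gg 0$, and why $H'_0=H'_1=0$ in every degree rather than just eventually — but these are exactly the details the paper leaves implicit, so the argument is the same in substance.
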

\begin{proof}
Write out Equation~\ref{eqn:EulerCharacteristic} for $\Sigma$, applying Proposition~\ref{prop:SimplicialH3MainTerm} to the term $H_2(\cR/\cJ[\Sigma,\Sigma^{-1}])$.  To get the final equality, apply Equation~\ref{eqn:EulerCharacteristic} to $\Delta_v$, yielding 
\[
\dim H_2(\cR/\cJ[\Delta_v,\Delta_v^{-1}])_i=\dim C^\alpha(\Delta_v)_i-\chi(\cR/\cJ[\Delta_v,\Delta^{-1}_v],i).
\]
\end{proof}

\begin{remark}
Corollary~\ref{cor:3DSimplicialHilbert} makes precise the well known fact that, in order to compute $\dim C^\alpha(\wDelta)_d$ for $\Delta\subset\R^3$, even for $d\gg 0$, one must know $\dim C^\alpha(\Sigma)_d$ for $\Sigma\subset\R^3$ arbitrary simplicial fans and all $d$.  See for instance~\cite[Remark~$65$]{ASWTet}.
\end{remark}

\begin{remark}
In the case of uniform parameters and $\Sigma\subset\R^3$ a simplicial fan, $\dim C^r(\Sigma)_d-\chi(\cR[\Sigma,\partial\Sigma],d)=0$ for all $d$ iff $C^r(\Sigma)$ is a free module over the polynomial ring in $3$ variables~\cite[Corollary~4.2]{LCoho}.  Hence we see that $C=0$ in Corollary~\ref{cor:3DSimplicialHilbert} characterizes when the sheaf associated to $C^r(\wDelta)$ is a vector bundle over $\mathbb{P}^3$.  This is the first nontrivial instance of a general characterization due to Schenck and Stiller~\cite[Theorem~3.1]{CohVan}.
\end{remark}

To understand the significance of the constant appearing in Corollary~\ref{prop:SimplicialH3MainTerm}, we return to $C^1(\Sigma)$ for the fan $\Sigma=\wDelta$ from Example~\ref{ex:3DimMorganScot}.

\begin{exm}\label{ex:3DimMorganScot2}
Referring back to Example~\ref{ex:3DimMorganScot}, let $w$ denote the vertex $(0,0,8)$, labelled with a $0$ in Figure~\ref{fig:3DSimpMorganScot}, so $\widehat{w}$ denotes the cone in $\R^4$ over this vertex.  Let $X\subset\R^3$ denote the cone over $\mbox{st}_\Delta(w)$, shown in Figure~\ref{fig:3DSimpMorganScotStar}.  Note that $X^{-1}=\partial X$ since we are considering uniform smoothness.

\begin{figure}[htp]
\includegraphics[width=.5\textwidth]{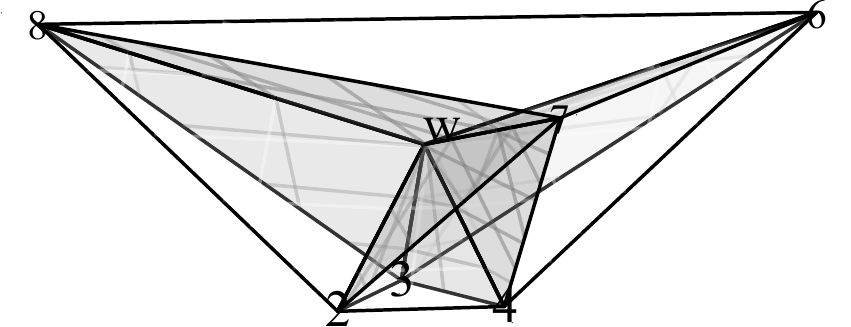}
\caption{Star of the vertex $w$}\label{fig:3DSimpMorganScotStar}
\end{figure}

Localizing $\cR/\cJ[\Sigma,\partial\Sigma]$ at $I(\widehat{w})$ yields
\[
\cR/\cJ[\Sigma,\partial\Sigma]_{I(\widehat{w})}=\cR/\cJ[X,\partial X]_{I(\widehat{w})}
\]
by Lemma~\ref{lem:Localize}.  By Lemma~\ref{lem:StarProjection},
\[
\cR/\cJ[X,\partial X]=\cR/\cJ[\Delta_w,\partial\Delta_w](-1)\otimes_R S,
\]
where $R=\R[x_1,x_2,x_3]$ is the polynomial ring in $3$ variables corresponding to the inclusion of $\R^3$ into $\R^4$ as the hyperplane $x_0=0$, and $\Delta_w$ is obtained by translating $\mbox{st}_\Delta(w)$ to the origin and taking the positive hull of each facet containing $w$.  Since $\partial\Delta_w=\emptyset$ we have
\[
H_3(\cR/\cJ[X,\partial X])=H_2(\cR/\cJ[\Delta_w])\otimes_R S.
\]
By Proposition~\ref{prop:AssFacts}, $I(\widehat{w})$ is associated to $H_3(\cR/\cJ[\Sigma,\partial\Sigma])$ if and only if the homogeneous maximal ideal is associated to $H_2(\cR/\cJ[\Delta_w])$.  This is true iff $H_2(\cR/\cJ[\Delta_w])\neq 0$, since by Theorem~\ref{thm:assPrimes1} the maximal ideal of $R$ is the only ideal that can be associated to $H_2(\cR/\cJ[\Delta_w,\partial\Delta_w])$.

Since $\PC(\Delta_w)$ relative to $\lk(\Delta_w)$ has the topology of a $3$-sphere, $H_2(\cR[\Delta_w])=H_1(\cR[\Delta_w])=0$.  From the long exact sequence in homology corresponding to
\[
0\rightarrow \cJ[\Delta_w] \rightarrow \cR[\Delta_w] \rightarrow \cR/\cJ[\Delta_w] \rightarrow 0
\]
we see that $H_2(\cR/\cJ[\Delta_w])\cong H_1(\cJ[\Delta_w])$.  Since $f_2(\Delta_w)=12$ and $f_1(\Delta_w)=6$, $\cJ[\Delta_w]$ has the form (nonzero in homological degrees $0,1,2$):
\[
0 \rightarrow \bigoplus\limits_{i=1}^{12} J(\tau_i) \rightarrow \bigoplus_{j=1}^6 J(e_j) \rightarrow J(v) \rightarrow 0.
\]
Each of the ideals in this sequence are generated in degree two.  In particular, we have isomorphisms (for all $i,j$)
\[
\begin{array}{rl}
J(\tau_i) & \cong\langle x^2 \rangle\\
J(e_j) & \cong \langle x,y \rangle^2\\
J(v) & \cong \langle x,y,z \rangle^2.
\end{array}
\]
In fact, using the arguments in the proof Corollary~\ref{cor:deg2} we can also show that $H_2(\cJ[\Delta_w])$ is generated in degree two, so it is of particular interest to examine $\cJ[\Delta_w]$ in degree two.

From the isomorphisms above, $\dim J(\tau_i)_2=2,\dim J(e_j)_2=3$, and $\dim J(v)_2=6$.  Hence $\cJ[\Delta_w]_2$ has the form
\[
0\rightarrow \R^{12} \xrightarrow{\delta_2} (\R^3)^6 \xrightarrow{\delta_1} \R^6 \rightarrow 0.
\]
Taking the Euler characteristic yields that the alternating sums of homologies is $0$, so
\[
\dim H_2(\cJ[\Delta_w])_2=\dim H_1(\cJ[\Delta_w])_2.
\]
But the homology $H_2(\cJ[\Delta_w])_2$ can be identified with nontrivial splines on $\Delta_w$ of degree $2$.  It follows that $H_1(\cJ[\Delta_w])\neq 0$ precisely when there is an `unexpected' nontrivial spline on $\Delta_w$ of degree $2$, which is indeed the case.  We check in Macaulay2 that
\[
\dim C^1(\Delta_w)_d -\chi(\cR/\cJ[\Delta_w,\partial\Delta_w],d)=\left\lbrace
\begin{array}{lr}
1 & d=2\\
0 & d\neq 2
\end{array}
\right.
\]
This is the same at each of the eight vertices of $\Delta$.  Hence, by Corollary~\ref{cor:3DSimplicialHilbert} we arrive at the conclusion
\[
\begin{array}{rl}
HP(C^1(\Sigma),d) & =\chi(\cR/\cJ[\Sigma,\partial\Sigma],d)+8\\
&=\dfrac{5}{2}d^3 - 13d^2 + \dfrac{51}{2}d - 3
\end{array}
\]
We emphasize that, unlike the two dimensional case, $HP(C^r(\wDelta),d)$ may not be either an upper or a lower bound for $HF(C^r(\wDelta),d)=\dim C^r_d(\Delta)$ in low degree.  However, for $d\gg 0$, $HF(C^r(\wDelta),d)=HP(C^r(\wDelta),d)$.  For our current example, here is a table of values computed in Macaulay2.

\begin{center}
\begin{tabular}{c|c|c}
$d$ & $HF(C^1(\wDelta),d)$ & $HP(\wDelta,d)=\frac{5}{2}d^3 - 13d^2 + \frac{51}{2}d - 3$\\
\hline
1 & 4 & 12\\
2 & 11 & 16\\
3 & 25 & 24\\
4 & 54 & 51\\
5 & 113 & 112\\
6 & 222 & 222\\
7 & 396 & 396\\
\end{tabular}
\end{center}

\end{exm}

We conclude, as promised, by computing $\dim C^1(\wDelta)_d$ for $\Delta\subset\R^3$ generic and $d\gg 0$.  This formula was shown by Alfeld, Schumaker, and Whiteley to hold for $d\ge 8$~\cite{ASWTet}.  For simplicity, set $f_i(\Delta)=f_i,f^0_i(\Delta)=f^0_i$.

\begin{thm}\label{thm:GenericC1}
Suppose $\Delta\subset\R^3$ is a generic triangulation of a $3$-ball.  Then, for $d\gg 0$,
\[
\dim C^1(\wDelta)_d=\chi(\cR/\cJ[\wDelta,\partial\wDelta],d)=f_3\binom{d+3}{3}-f^0_2(d+1)^2+f^0_1(3d+1)-4f^0_0
\]
\end{thm}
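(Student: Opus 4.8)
The plan is to read the formula off Corollary~\ref{cor:3DSimplicialHilbert}. For $\Sigma=\wDelta$ with uniform smoothness $r=1$ it gives $HP(C^1(\wDelta),d)=\chi_H(\cR/\cJ[\wDelta,\partial\wDelta],d)+C$, where $C=\sum_{v}\sum_{i\ge 0}\dim H_2(\cR/\cJ[\Delta_v,\partial\Delta_v])_i$, the sum over interior vertices $v$ of $\Delta$ (together with boundary vertices with $\widehat v\in L_{\Sigma,\Sigma^{-1}}$). Since $HF(C^1(\wDelta),d)=HP(C^1(\wDelta),d)$ for $d\gg 0$, and $\chi_H$ agrees with $\chi$ for $d\gg 0$ (they differ only where some $HF(S/J(\gamma),\cdot)$ has not yet stabilized to its Hilbert polynomial), the theorem reduces to two independent claims: (A) $C=0$ when $\Delta$ is generic, and (B) $\chi(\cR/\cJ[\wDelta,\partial\wDelta],d)=f_3\binom{d+3}{3}-f^0_2(d+1)^2+f^0_1(3d+1)-4f^0_0$ for $d\gg 0$.

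Claim (B) is a term-by-term evaluation of $\chi(\cR/\cJ[\wDelta,\partial\wDelta],d)=\sum_{i=0}^4(-1)^i\sum_{\gamma\in\Sigma^{\ge 0}_{4-i}}HF(S/J(\gamma),d)$. The facet term gives $f_3\binom{d+3}{3}$ since $f_4(\wDelta)=f_3(\Delta)$. For an interior triangle $\tau$ the ideal $J(\tau)=\langle l_\tau^2\rangle$ has a linear resolution, so $HF(S/J(\tau),d)=\binom{d+3}{3}-\binom{d+1}{3}=(d+1)^2$, and there are $f^0_2$ of them. For an interior edge $e$, $J(\widehat e)$ is generated by the squares of the $\ge 3$ distinct linear forms vanishing on the $2$-plane $\mbox{aff}(\widehat e)$; these squares span the $3$-dimensional space of quadrics in the two relevant variables, so after the change of variables of Lemma~\ref{lem:ProjAssW} one gets $S/J(\widehat e)\cong\bigl(\R[y_1,y_2]/\langle y_1,y_2\rangle^2\bigr)\otimes_\R\R[z_1,z_2]$ and $HF(S/J(\widehat e),d)=3d+1$ (equivalently Corollary~\ref{cor:FatPoints} with $\mu\ge 3$ equal exponents $2$, where $\Omega=2$), contributing $f^0_1(3d+1)$. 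For an interior vertex $v$, $\lk_\Delta(v)$ is a triangulated $2$-sphere with at least four vertices, hence at least six edges, so $J(\widehat v)$ is generated by at least six squares of linear forms vanishing on the line $\mbox{aff}(\widehat v)$; six generic linear forms lie on no common conic, so these squares span the full $6$-dimensional space of quadrics, whence $S/J(\widehat v)\cong\bigl(\R[y_1,y_2,y_3]/\langle y_1,y_2,y_3\rangle^2\bigr)\otimes_\R\R[z]$ and $HF(S/J(\widehat v),d)=4$ for $d\ge 1$; there are $f^0_0$ such rays, contributing $-4f^0_0$. Finally $J(\mathbf 0)$ is generated by the squares of all interior-triangle forms, with common zero locus $\{\mathbf 0\}$ for generic $\Delta$, so $S/J(\mathbf 0)$ has finite length and contributes $0$ for $d\gg 0$. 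Summing these gives (B).

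For claim (A), Proposition~\ref{prop:SimplicialH3MainTerm} already expresses $C$ as above, so it suffices to prove $H_2(\cR/\cJ[\Delta_v,\partial\Delta_v])=0$ for each relevant $v$ when $\Delta$ is generic. Here $\Delta_v$ is the cone from the origin over the triangulated surface $\lk_\Delta(v)$, a $2$-sphere if $v$ is interior and a $2$-disk if $v\in\partial\Delta$; in either case $\PC(\Delta_v)$ relative to $\lk(\Delta_v)\cup\PC(\partial\Delta_v)$ has the homology of $S^3$, so by Proposition~\ref{prop:LowHom} $H_1(\cR[\Delta_v,\partial\Delta_v])=H_2(\cR[\Delta_v,\partial\Delta_v])=0$, and the long exact sequence attached to $0\to\cJ[\Delta_v,\partial\Delta_v]\to\cR[\Delta_v,\partial\Delta_v]\to\cR/\cJ[\Delta_v,\partial\Delta_v]\to 0$ yields $H_2(\cR/\cJ[\Delta_v,\partial\Delta_v])\cong H_1(\cJ[\Delta_v,\partial\Delta_v])$. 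Thus I must show that the complex $\cJ[\Delta_v,\partial\Delta_v]$ of $R=\R[x_1,x_2,x_3]$-modules is exact in homological degree $1$ for generic coordinates. This homology is a finite-length module by Theorem~\ref{thm:assPrimes1}, so its total dimension is an upper semicontinuous function of the placement of the rays of $\Delta_v$; hence it is enough to exhibit, for each combinatorial type occurring as such a vertex link, a single geometric realization on which it vanishes — equivalently (when $v$ is interior, where $H_0$ and $H_1$ of the $\cR/\cJ$ complex of a cone over a sphere also vanish, so that vanishing of $H_2$ is equivalent to freeness of $C^1(\Delta_v)$ over $R$ by \cite[Corollary~4.2]{LCoho}; and similarly in the disk case) a realization carrying no unexpected $C^1$ splines in any degree.

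The construction of such favorable realizations is the main obstacle, and it is exactly where genericity is essential: Example~\ref{ex:3DimMorganScot2} shows that the (non-generic) three-dimensional Morgan--Scott vertex star carries an unexpected degree-$2$ spline, so $C\neq 0$ there. I expect to settle it either by the semicontinuity reduction above combined with the known generic-freeness of $C^1$ on complete simplicial fans in $\R^3$ and on cones over generic planar triangulations, or — staying inside the present framework — by an induction on the number of vertices of $\lk_\Delta(v)$ along stellar subdivisions, each of which can be seen to preserve the vanishing $H_1(\cJ[\Delta_v,\partial\Delta_v])=0$. Alternatively one may import the relevant lower-dimensional generic dimension statements of Alfeld--Schumaker and of Alfeld--Schumaker--Whiteley for vertex stars, which is not circular since the present argument deduces the global three-dimensional formula from such local input. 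Granting (A) and (B), the displayed formula follows at once.
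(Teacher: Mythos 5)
Your reduction is exactly the paper's: by Corollary~\ref{cor:3DSimplicialHilbert} the theorem splits into computing $\chi_H(\cR/\cJ[\wDelta,\partial\wDelta],d)$ term by term (your claim (B)) and proving that $H_2(\cR/\cJ[\Delta_v,\partial\Delta_v])=0$ for every vertex $v$ (your claim (A)). Claim (B) is carried out correctly and matches the paper's computation; the mention of $J(\mathbf 0)$ is superfluous, since for $\Sigma=\wDelta$ the cone vertex lies in $\partial\Sigma=\Sigma^{-1}$, so the complex $\cR/\cJ[\Sigma,\partial\Sigma]$ simply has no summand in homological degree $0$, but this does not affect the answer.

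The genuine gap is claim (A). You correctly translate the required vanishing into $H_1(\cJ[\Delta_v,\partial\Delta_v])=0$ for a generic three-dimensional simplicial fan and correctly invoke semicontinuity to reduce to producing one favorable realization per combinatorial type, but then you only sketch three candidate strategies (``I expect to settle it either by...'') without carrying any of them to completion, explicitly flagging ``the construction of such favorable realizations is the main obstacle.'' This is precisely the content of the paper's Theorem~\ref{thm:2Dfan}, and it is where essentially all the work lies. The paper's proof runs an induction on the number of interior rays: if $\Sigma$ is complete it removes a three-dimensional cone to reduce to a noncomplete subfan via a commutative diagram, using Corollary~\ref{cor:deg2} (that $H_1(\cJ[\Sigma])$ is generated in degree two for generic complete $\Sigma$) to reduce the check to a single degree; for noncomplete fans it sets up Whiteley's matrix $N_2(\Sigma)$, reduces to triangular boundary, and then runs the vertex-splitting / edge-contraction induction of \cite{WhiteleyComb} to show $N_2(\Sigma)$ has full rank for a generic realization. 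Your stellar-subdivision suggestion gestures in the same direction as the paper's vertex-splitting argument but is not fleshed out; your suggestion to import Alfeld--Schumaker--Whiteley for vertex stars is at least potentially noncircular but would need a careful argument that the local statement you invoke is itself not a consequence of the global theorem being proved, and in any case the paper deliberately avoids it. Without some version of Theorem~\ref{thm:2Dfan} (or a verified substitute), the proposal does not establish claim (A), and hence does not prove the theorem.
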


In~\cite[Remark~$5$]{ASWTet}, the authors note that Theorem~\ref{thm:GenericC1} can be derived by a simple heuristic, which is in fact the computation of $\chi_H(\cR/\cJ[\wDelta,\partial\wDelta],d)$.  We make this heuristic argument rigorous by showing that the one relevant homology module vanishes in large degree.  There are two key steps.  First, we use Corollary~\ref{cor:3DSimplicialHilbert} to reduce the argument to three dimensional simplicial fans.  Second, for a generic three dimensional simplicial fan $\Sigma$, we must show that $H_2(\cR/\cJ[\Sigma,\partial\Sigma])=0$, or equivalently show that $\dim C^1(\Sigma)_d=\chi(\cR/\cJ[\Sigma,\partial\Sigma],d)$.  This is accomplished in ~\cite[Corollaries~40,41]{ASWTet} using projections of generalized triangulations.  We accomplish this by using some homological algebra to reduce to the case of noncomplete fans, where the methods of Whiteley~\cite{WhiteleyComb} can be applied directly.  We postpone the proof of Theorem~\ref{thm:GenericC1} until we have accomplished this second step.

If $\Sigma\subset\R^3$ has the form $\Delta_w$ for $w\in\Delta$, where $\Delta$ triangulates a three-ball, then $\lk(\Sigma)$ is contractible.  In this case, 
\[
H_2(\cR/\cJ[\Sigma,\partial\Sigma])\cong H_1(\cJ[\Sigma,\partial\Sigma]),
\]
so we will work with $H_1(\cJ[\Sigma,\partial\Sigma])$.

If the union of the cones of $\Sigma$ is $\R^3$, $\Sigma$ is called \textit{complete}.  We use the following description for $H_1(\cJ[\Sigma,\partial\Sigma])$, which is the analog of~\cite[Lemma~3.8]{LCoho} for complete fans.

\begin{lem}\label{lem:H1Jpres}
Let $\Sigma\subset\R^3$ be a hereditary, complete fan.  Define $K^r\subset \bigoplus\limits_{\tau\in\Sigma_2} S(-r-1)$ by
\[
K^r=\{\sum_{v\in\tau} a_\tau e_\tau|v\in\Sigma_1, \sum a_\tau l^{r+1}_\tau=0\}.
\]
Also define $V^r\subset \bigoplus\limits_{\tau\in\Sigma_2} S(-r-1)$ by
\[
V^r=\{\sum_{\tau\in\Sigma_2} a_\tau e_\tau| \sum a_\tau l^{r+1}_\tau=0\}.
\]
Then $K^r\subset V^r$ and $H_1(\cJ[\Sigma])\cong V^r/K^r$ as $S$-modules.
\end{lem}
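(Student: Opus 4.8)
\emph{Proof plan.} Because $\Sigma$ is complete it has no boundary, so $\Sigma^{-1}=\emptyset$, every $\tau\in\Sigma_2$ is interior with $J(\tau)=\langle l_\tau^{r+1}\rangle$, and $J(\sigma)=0$ for all facets $\sigma$. Thus $\cJ[\Sigma]$ is supported in homological degrees $0,1,2$ and reads
\[
0\to\bigoplus_{\tau\in\Sigma_2}J(\tau)\xrightarrow{\delta_2}\bigoplus_{v\in\Sigma_1}J(v)\xrightarrow{\delta_1}J(\mathbf{0})\to 0,
\]
where $J(v)=\sum_{v\in\tau}J(\tau)$, $J(\mathbf{0})=\sum_{\tau\in\Sigma_2}J(\tau)$, and $H_1(\cJ[\Sigma])=\mbox{ker}\,\delta_1/\mbox{im}\,\delta_2$. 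The plan is to write $\delta_1,\delta_2$ out explicitly and then compute this homology by comparing $\bigoplus_v J(v)$ with a free module refined along the rays of $\Sigma$.

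First I would read off the cellular differentials from the structure of $\PC(\Sigma)$ relative to $\lk(\Sigma)$. The $1$-cell $\PC(v)$ is the segment $[\mathbf{0},u_v]$; orienting it from $\mathbf{0}$ toward $u_v$ and killing $\lk(\Sigma)$ (which contains every $u_v$ but not $\mathbf{0}$) gives $\delta_1(e_v)=-e_{\mathbf{0}}$ for every ray, so $\delta_1$ is, up to an overall sign, the summation map $\sum_v a_v e_v\mapsto\sum_v a_v\in J(\mathbf{0})$. The $2$-cell $\PC(\tau)$ is the triangle on $\mathbf{0}$ and the two ray generators of $\tau$; fixing for each $\tau$ an ordering $v_1(\tau),v_2(\tau)$ of its rays and killing $\lk(\tau)\subset\lk(\Sigma)$ gives $\delta_2(x\,e_\tau)=x\,e_{v_1(\tau)}-x\,e_{v_2(\tau)}$ for $x\in J(\tau)$, and $\delta_1\delta_2=0$. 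Via the graded isomorphism $J(\tau)\cong S(-r-1)$, $1\mapsto l_\tau^{r+1}$, the source of $\delta_2$ becomes $\bigoplus_\tau S(-r-1)$, and the module $V^r$ of the lemma is exactly the syzygy module of $\{l_\tau^{r+1}:\tau\in\Sigma_2\}$, namely the kernel of $\bigoplus_\tau S(-r-1)\to J(\mathbf{0})$, $e_\tau\mapsto l_\tau^{r+1}$.

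The core step is a diagram chase with two surjections. Put $A=\bigoplus_{v\in\Sigma_1}\bigoplus_{\tau\ni v}S(-r-1)$, with basis $\{e_\tau^{(v)}\}$. Let $q\colon A\twoheadrightarrow\bigoplus_v J(v)$ be the surjection $e_\tau^{(v)}\mapsto l_\tau^{r+1}\in J(v)$; since $q$ is a direct sum over the rays, $\mbox{ker}\,q=\bigoplus_v\mbox{Syz}(\{l_\tau^{r+1}:\tau\ni v\})$, a direct sum of local syzygy modules. Let $m\colon A\twoheadrightarrow\bigoplus_\tau S(-r-1)$ be $e_\tau^{(v)}\mapsto e_\tau$; then $\mbox{ker}\,m$ is spanned by the elements $e_\tau^{(v_1(\tau))}-e_\tau^{(v_2(\tau))}$. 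The map $\tilde\delta_2\colon\bigoplus_\tau S(-r-1)\to A$, $e_\tau\mapsto e_\tau^{(v_1(\tau))}-e_\tau^{(v_2(\tau))}$, lifts $\delta_2$ (i.e.\ $q\tilde\delta_2=\delta_2$) and has image $\mbox{ker}\,m$; moreover $\delta_1 q(e_\tau^{(v)})=-l_\tau^{r+1}$, so $\mbox{ker}(\delta_1 q)=m^{-1}(V^r)$. As $q$ is onto and both $\mbox{ker}\,q$ and $\mbox{im}\,\tilde\delta_2$ lie in $\mbox{ker}(\delta_1 q)$, pushing $\mbox{ker}\,\delta_1/\mbox{im}\,\delta_2$ through $q$ and then through $m$, dividing out the relevant kernel at each stage, gives
\[
H_1(\cJ[\Sigma])\cong\frac{m^{-1}(V^r)}{\mbox{ker}\,m+\mbox{ker}\,q}\cong\frac{V^r}{m(\mbox{ker}\,q)}.
\]
Finally, $m(\mbox{ker}\,q)=\sum_v m(\mbox{Syz}(\{l_\tau^{r+1}:\tau\ni v\}))$ is precisely the submodule of $V^r$ generated by all syzygies of $\{l_\tau^{r+1}\}$ supported on the $2$-faces containing a single ray, that is, $K^r$; in particular $K^r\subseteq V^r$, and the lemma follows.

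No step here is deep; the main obstacle is organizational bookkeeping: keeping the orientations $v_1(\tau),v_2(\tau)$ and the signs of $\delta_1,\delta_2$ consistent so that $q\tilde\delta_2=\delta_2$ and $\mbox{im}\,\tilde\delta_2=\mbox{ker}\,m$ hold on the nose, and verifying carefully that $\mbox{ker}\,q$ is the asserted direct sum of local syzygy modules and that $m$ carries it exactly onto the submodule $K^r$.
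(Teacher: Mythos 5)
Your proof is correct and follows essentially the same path as the paper's. The paper packages your diagram chase as the long exact sequence in homology of a short exact sequence of complexes, built from the same auxiliary free module $\bigoplus_{v\in\tau} S(-r-1)$ and the same maps $q$, $m$, $\tilde\delta_2$; the explicit identification of the cellular differentials and the verifications that $\operatorname{im}\tilde\delta_2=\ker m$ and $m(\ker q)=K^r$ are the same content in both arguments.
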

\begin{proof}
The proof is similar to the proof of ~\cite[Lemma~3.8]{LCoho}.  Let $K_v^r\subset \bigoplus_{v\in\tau} S(-r-1)e_\tau$ be the module of relations around the ray $v\in\Sigma_1$, namely
\[
K^r_v=\{\sum_{v\in\tau} a_\tau e_\tau|\sum a_\tau l^{r+1}_\tau=0\}.
\]
Furthermore, let $J(\nu)$ be the ideal of the central vertex of $\Sigma$.  Set up the following diagram with exact rows, whose first row is the complex $J[\Sigma]$.

\begin{tikzcd}
0 & 0 & 0\\
\bigoplus\limits_{\tau\in\Sigma_2} J(\tau) \ar{r}\ar{u} & \bigoplus\limits_{v\in\Sigma_1} J(v) \ar{r}\ar{u} & J(\nu) \ar{u}\\
\bigoplus\limits_{\tau\in\Sigma_2} S(-r-1) \ar{r}\ar{u} & \bigoplus\limits_{\substack{v\in\Sigma_1,\tau\in\Sigma_2\\ v\in\tau}} S(-r-1) \ar{r}\ar{u} & \bigoplus\limits_{\tau\in\Sigma_2} S(-r-1)\ar{u} \\
0\ar{u}\ar{r} & \bigoplus\limits_{v\in\Sigma_1} K^r_v \ar{u}\ar{r}{\iota} & V^r \ar{u}\\
& 0\ar{u} & 0\ar{u} \\
\end{tikzcd}

The middle row is in fact exact because the inclusion on the left hand side has the effect of gluing together copies of $S(-r-1)$ that correspond to different rays in $\Sigma_1$, leaving a copy of $S(-r-1)$ for every codimension one face $\tau\in\Sigma_2$ in the cokernel.  Now the long exact sequence in homology yields the isomorphisms $H_2(\cJ[\Sigma])\cong \mbox{ker}(\iota)$ and $H_1(\cJ[\Sigma])\cong \mbox{coker}(\iota)$.  The image of $\bigoplus\limits_{v\in\Sigma_1} K^r_v$ under $\iota$ is precisely $K^r$, so we are done.
\end{proof}

\begin{cor}\label{cor:deg2}
Let $\Sigma\subset\R^3$ be a complete, generic, simplicial fan.  For uniform smoothness $r=1$, $H_1(\cJ[\Sigma])$ is generated in degree two.
\end{cor}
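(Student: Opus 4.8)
The plan is to analyze $H_1(\cJ[\Sigma])$ through the presentation supplied by Lemma~\ref{lem:H1Jpres}. Since $r=1$ we have $H_1(\cJ[\Sigma])\cong V^1/K^1$, where $V^1\subset\bigoplus_{\tau\in\Sigma_2}S(-2)$ is the module of all syzygies $\sum a_\tau l_\tau^2=0$ and $K^1$ is the submodule generated by syzygies supported on the codimension one faces through a single ray. Because $\Sigma$ is complete and generic, the planes $\mathrm{aff}(\tau)$ are in general position, so the squares $l_\tau^2$ span $S_2$ and $\langle l_\tau^2\mid\tau\in\Sigma_2\rangle$ is the square $\mathfrak{m}^2=(x,y,z)^2$ of the homogeneous maximal ideal; similarly, for each ray $v\in\Sigma_1$ the forms $l_\tau$ with $v\in\tau$ (there are at least three, as $\Sigma$ is complete) span $I(v)_1$ and their squares span $I(v)^2_2$, so $\langle l_\tau^2\mid v\in\tau\rangle=I(v)^2$, a fat point ideal of the type governed by Theorem~\ref{thm:FatPoints}.

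First I would pin down generating degrees. Choosing $\tau_1,\dots,\tau_6\in\Sigma_2$ whose squares form a basis of $S_2$ splits $\bigoplus_\tau S(-2)$ and identifies $V^1\cong\mathrm{Syz}(\mathfrak{m}^2)\oplus S(-2)^{f_2-6}$; from the minimal free resolution $0\to S(-4)^3\to S(-3)^8\to S(-2)^6\to\mathfrak{m}^2\to 0$, the first syzygy module $\mathrm{Syz}(\mathfrak{m}^2)$ is minimally generated by eight elements of degree $3$, so $V^1$ is generated in degrees $2$ and $3$. Likewise, Theorem~\ref{thm:FatPoints} applied to $I(v)^2$ with all exponents equal to $2$ and $\mu=\mu(v)\ge 3$ (hence $\Omega=2$) gives $K^1_v\cong S(-2)^{\mu(v)-3}\oplus S(-3)^2$. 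Since $V^1$ has no minimal generator of degree $\ge 4$, the equality $(V^1/K^1)_d=S_1\cdot(V^1/K^1)_{d-1}$ holds automatically for $d\ge 4$, so the whole corollary reduces to the single identity $V^1_3=K^1_3+S_1V^1_2$.

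For that I would count dimensions. Genericity gives $\dim V^1_2=f_2-6$; the multiplication $S_1\otimes_\R V^1_2\to V^1_3$ is injective (it is $S_1$ tensored with the inclusion $V^1_2\hookrightarrow\bigoplus_\tau\R$), so $\dim S_1V^1_2=3(f_2-6)$, and since $\langle l_\tau^2\rangle=\mathfrak{m}^2$ we get $\dim V^1_3=3f_2-\dim(\mathfrak{m}^2)_3=3f_2-10$, whence $\dim\bigl(V^1_3/S_1V^1_2\bigr)=8$. Modulo $S_1V^1_2$ the contribution of a ray $v$ to $K^1_3$ collapses to the image of the two degree $3$ generators of $K^1_v$ — the ``second order'' relations built from a pair of generators of $I(v)$ — because $S_1\cdot(K^1_v)_2\subseteq S_1V^1_2$. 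So the corollary becomes the assertion that, for a generic fan, these second order ray relations span $V^1_3/S_1V^1_2\cong\mathrm{Syz}(\mathfrak{m}^2)_3\cong\R^8$. This is a nonempty Zariski-open condition (at least $2f_1\ge 8$ such relations occur), so it is enough to exhibit one generic fan on which the span is full, whence it holds for generic $\Sigma$ by upper semicontinuity; alternatively the combinatorial rigidity results of Whiteley~\cite{WhiteleyComb}, which enter the picture at exactly this point, can be used to control the span directly. I expect this verification of the span — reconciling the local, second order syzygies around the rays with the eight Koszul-type syzygies of $\mathfrak{m}^2$ — to be the main obstacle; everything preceding it is bookkeeping with the resolutions of $\mathfrak{m}^2$ and of the ideals $I(v)^2$.
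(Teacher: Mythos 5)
Your reduction matches the paper's strategy almost exactly: pass to the presentation $V^1/K^1$ from Lemma~\ref{lem:H1Jpres}, observe that $V^1$ is generated in degrees $2$ and $3$ with the degree-$3$ part coming from the eight linear syzygies of $\mathfrak{m}^2$, use Theorem~\ref{thm:FatPoints} to see that each $K^1_v$ contributes exactly two degree-$3$ generators, and so conclude that the whole corollary collapses to the single assertion that the ray-supported degree-$3$ syzygies span $V^1_3/S_1V^1_2\cong\R^8$. All of that bookkeeping is correct and agrees with the paper.

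The problem is that you stop there. You explicitly flag the spanning claim as ``the main obstacle'' and leave it unverified, sketching two possible routes (exhibit one example and invoke semicontinuity, or appeal to Whiteley's rigidity) without carrying either out. But this is not a peripheral lemma — it is the entire content of the corollary once the reductions are in place, and it is precisely where the genericity hypothesis does its work. The paper does execute the verification: it fixes four rays $v_1,\dots,v_4$ in general position, normalizes them by a projective change of coordinates to the three coordinate axes and $\langle -1,-1,-1\rangle$, writes out the four ideals $J(v_i)$ explicitly, and checks (by hand or in Macaulay2) that the eight resulting linear syzygies on $J(v_1)+\cdots+J(v_4)=\mathfrak{m}^2$ span the eight-dimensional space. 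That direct computation also sidesteps the subtlety in your semicontinuity framing: ``generic $\Sigma$'' ranges over fans of varying combinatorial type, so upper semicontinuity over a fixed parameter space does not immediately apply; the paper's argument works for \emph{any} fan whose rays contain four in linearly general position, which genericity supplies uniformly. To complete your proof you would need to actually perform the computation you describe (or at minimum reduce, as the paper does, to a single normalized configuration and exhibit the eight spanning syzygies there).
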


\begin{proof}
Since $\Sigma$ is generic, we may assume there are at least $6$ codimension one faces, so $J(\nu)\cong \langle x,y,z\rangle^2$.  There is an eight dimensional space of linear syzygies on $J(\nu)$ and no syzygies of higher degree (the free resolution is in fact \textit{linear}).  It follows that $V^1$ consists of an eight dimensional space of linear syzygies (of degree three), and perhaps many more of degree two.  We need only check that these eight linear syzygies live in the submodule $K^1$, i.e. that these eight linear syzygies may be obtained as syzygies around rays.  Since $\Sigma$ is generic, we may assume that $\Sigma_1$ consists of at least $4$ rays $v_1,v_2,v_3,v_4$ whose linear spans are linearly independent.  Hence $J(v_i)\cong \langle x,y\rangle^2$.  An easy check yields that there are two linear syzygies on this ideal.  We claim that the two linear syzygies contributed from each of these four rays form a vector space of linear syzygies of dimension eight, which spans the entire space of linear syzygies on $J(\nu)$.  We can do this explicitly by making a projective change of coordinates so that $v_1$ points along the positive $x$-axis, $v_2$ along the positive $y$-axis, $v_3$ along the positive $z$-axis, and $v_4$ points in the direction of the vector $\langle -1,-1,-1 \rangle$.  Then we have
\[
\begin{array}{rl}
J(v_1)= \langle y^2,yz,z^2 \rangle &
J(v_2)= \langle x^2,xz,z^2\rangle\\
J(v_3)= \langle x^2,xy,y^2\rangle &
J(v_4)= \langle (x-z)^2,(x-z)(y-z),(y-z)^2\rangle.
\end{array}
\]
The claim is that the linear syzygies on $J(v_1),\ldots,J(v_4)$ generate the linear syzygies on $J(v_1)+\cdots +J(v_4)=J(\nu)$.  This is readily checked by hand or in Macaulay2.
\end{proof}

\begin{thm}\label{thm:2Dfan}
Let $\Sigma\subset\R^3$ be a generic hereditary simplicial fan with $\lk(\Sigma)$ simply connected.  Then
\[
\dim C^1(\Sigma)_d=\chi(\cR/\cJ[\Sigma,\partial\Sigma],d).
\]
Equivalently,
\[
H_2(\cR/\cJ[\Sigma,\partial\Sigma])=0.
\]
\end{thm}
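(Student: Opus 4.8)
The plan is to carry out the strategy announced just before the statement: split according to the topology of $\lk(\Sigma)$, settle the noncomplete case by citing Whiteley, and then reduce the complete case to it. Since $\Sigma$ is a pure hereditary $3$-dimensional fan, $\lk(\Sigma)$ is a compact $2$-manifold (with or without boundary), and being simply connected it is therefore either $\mathbb{S}^2$ — precisely when $\Sigma$ is complete, so $\partial\Sigma=\emptyset$ — or a closed $2$-disk, in which case $\lk(\Sigma)$ is contractible. In either case Proposition~\ref{prop:LowHom} yields $H_1(\cR[\Sigma,\partial\Sigma])=0$ and $H_2(\cR[\Sigma,\partial\Sigma])\cong H_1(\lk(\Sigma),\partial\lk(\Sigma);S)$, and the latter vanishes both for $\mathbb{S}^2$ and for a disk relative to its boundary. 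Feeding this into the long exact homology sequence of $0\to\cJ[\Sigma,\partial\Sigma]\to\cR[\Sigma,\partial\Sigma]\to\cR/\cJ[\Sigma,\partial\Sigma]\to 0$ collapses it to an isomorphism $H_2(\cR/\cJ[\Sigma,\partial\Sigma])\cong H_1(\cJ[\Sigma,\partial\Sigma])$. So in both cases it suffices to prove $H_1(\cJ[\Sigma,\partial\Sigma])=0$; by Equation~\eqref{eqn:EulerCharacteristic} this is the same as $\dim C^1(\Sigma)_d=\chi(\cR/\cJ[\Sigma,\partial\Sigma],d)$ for all $d$.

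When $\lk(\Sigma)$ is contractible this equality says exactly that the spline matrix $\phi$ of Lemma~\ref{lem:SplineMatrix} has the maximal rank permitted by its shape, and for generic hereditary configurations this maximal-rank statement is precisely what Whiteley proves via his rigidity/matroid analysis of the transfer matrix~\cite{WhiteleyComb}. His argument does not depend on the fan being planar (though, if one wishes, a central projection as in Lemma~\ref{lem:StarProjection} places $\Sigma$ over a planar simplicial complex whenever $\lk(\Sigma)$ sits in an open hemisphere); one only needs to check that the genericity and hereditariness of $\Sigma$, together with contractibility of $\lk(\Sigma)$, supply his combinatorial hypotheses. This disposes of the noncomplete case.

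For the complete case I would reduce to the noncomplete one. By Corollary~\ref{cor:deg2}, for generic complete $\Sigma$ and $r=1$ the module $H_1(\cJ[\Sigma])$ is generated in degree two, so it is enough to show $H_1(\cJ[\Sigma])_2=0$; and by Lemma~\ref{lem:H1Jpres} this amounts to proving that every scalar syzygy $\sum_{\tau\in\Sigma_2}a_\tau l_\tau^2=0$ is a sum of syzygies each supported on the set of $2$-faces through a single ray — that is, $(K^1)_2=(V^1)_2$. I would establish this by a local-to-global argument over the triangulated sphere $\lk(\Sigma)$: the star $\mathrm{st}_\Sigma(v)$ of each ray is a noncomplete fan with contractible link, so the syzygies coming from a neighborhood of $v$ are controlled by the noncomplete case; the pairwise and triple overlaps of stars are simple fans whose relevant homology is either zero or computed directly; and one assembles these local contributions over the nerve of the cover, which is homotopy equivalent to $\lk(\Sigma)\cong\mathbb{S}^2$. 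Because $\mathbb{S}^2$ is simply connected, the gluing carries no global obstruction, and one reads off $H_1(\cJ[\Sigma])=0$.

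The main obstacle is precisely this assembly. Any Mayer--Vietoris or \v{C}ech argument with $\cJ$-complexes requires short exact sequences of the form $0\to\cJ[A\cap B]\to\cJ[A]\oplus\cJ[B]\to\cJ[A\cup B]\to 0$; the surjectivity is automatic, since the ideals $J(\gamma)$ add up correctly, but the injectivity needs the ideal identities $J_A(\gamma)\cap J_B(\gamma)=J_{A\cap B}(\gamma)$, and these can fail at the cone point (the zero face) in low degrees when some vertex of $\lk(\Sigma)$ has small valence. Forcing these identities in the degrees that matter — leaning on genericity and the degree-two reduction of Corollary~\ref{cor:deg2}, or else carrying the (finite-length) correction terms through the spectral sequence and checking that they introduce no spurious homology — is the delicate step, and it is here that simple connectivity of $\lk(\Sigma)$ is essential. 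Everything else — the topological input of Proposition~\ref{prop:LowHom}, the degree-two reduction of Corollary~\ref{cor:deg2}, and the noncomplete case from Whiteley — is already in place in the paper or a one-line citation.
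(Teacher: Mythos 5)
Your noncomplete case matches the paper's approach: there the author indeed sets up the matrix $N_2(\Sigma)$ for the map $\delta_2$, reduces to the case where $\lk(\Sigma)$ has triangular boundary, and then re-runs Whiteley's vertex-splitting/edge-contraction induction in the fan setting. The paper does not \emph{just} cite Whiteley — since Whiteley's results are stated for planar triangulations and the paper works with fans, the translation (what $N_2$ is, what full rank means, how the induction step preserves the triangular-boundary hypothesis) is carried out explicitly — but this is essentially the same argument you gesture at, and your observation that a central projection can be used when $\lk(\Sigma)$ sits in an open hemisphere is reasonable, provided you actually go through the transfer.

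The real gap is in your complete case, and you name it yourself: your Mayer--Vietoris assembly over the cover of $\Sigma$ by stars of rays requires exactness of sequences of the form $0\to\cJ[A\cap B]\to\cJ[A]\oplus\cJ[B]\to\cJ[A\cup B]\to 0$, the kernel identity $J_A(\gamma)\cap J_B(\gamma)=J_{A\cap B}(\gamma)$ does not hold in general at the cone point, and you do not resolve this; nor is it clear that ``simple connectivity of $\lk(\Sigma)$'' will kill all the relevant terms on the $E_2$ page even if you could set up the spectral sequence. The paper sidesteps the entire issue with a much lighter move. For complete $\Sigma$, delete a single $3$-cone $\sigma$ to obtain a noncomplete fan $\Sigma'$ whose interior rays are fewer by three; the induction (on interior rays) and the already-settled noncomplete case give $H_1(\cJ[\Sigma',\partial\Sigma'])=0$. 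Next one has a short exact sequence of chain complexes
\[
0 \longrightarrow \bigl(\textstyle\bigoplus_{i=1}^3 J(\tau_i) \to \bigoplus_{i=1}^3 J(v_i) \to J(\nu)\bigr)\longrightarrow \cJ[\Sigma]\longrightarrow \cJ[\Sigma',\partial\Sigma']\longrightarrow 0,
\]
where $\tau_i,v_i$ are the faces and rays of $\sigma$ and $\nu$ is the cone vertex; genericity pins down all the ideals in the top row as $\langle x^2\rangle$, $\langle x,y\rangle^2$, $\langle x,y,z\rangle^2$ respectively, so a direct degree-$2$ Euler characteristic count shows the top row is exact in degree $2$, hence $H_1(\cJ[\Sigma])_2=0$. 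By Corollary~\ref{cor:deg2} the module $H_1(\cJ[\Sigma])$ is generated in degree two, so $H_1(\cJ[\Sigma])=0$ and you are done. You should use this reduction rather than the \v{C}ech-type assembly: it requires only one auxiliary short exact sequence (with an obvious, explicitly described kernel), avoids all the problematic ideal-intersection identities, and exploits Corollary~\ref{cor:deg2} in exactly the form you already cite.
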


\begin{proof}
It is equivalent to prove that $H_1(\cJ[\Sigma,\partial\Sigma])=0$.  We argue by induction on the number of interior vertices.  First assume $\Sigma$ is complete and let $\Sigma'$ be the simplicial fan obtained by removing any three dimensional cone.  Let $\tau_1,\tau_2,\tau_3$ be the codimension one faces and $v_1,v_2,v_3$ the rays of the cone removed.  We have the following commutative diagram with exact columns.

\begin{tikzcd}
 & 0\ar{d} & 0\ar{d} & 0\ar{d} \\
 & \bigoplus\limits_{i=1}^3 J(\tau_i) \ar{d}\ar{r} & \bigoplus\limits_{i=1}^3 J(v_i) \ar{d}\ar{r} & J(\nu) \ar{d}\\
 J[\Sigma] & \bigoplus\limits_{\tau\in\Sigma_1} J(\tau) \ar{r}\ar{d} & \bigoplus\limits_{v\in \Sigma_1} J(v) \ar{r}\ar{d} & J(\nu)\ar{d}\\
 J[\Sigma',\partial\Sigma'] & \bigoplus\limits_{\tau\in\left(\Sigma'\right)^0_1} J(\tau) \ar{r}\ar{d} & \bigoplus\limits_{v\in \left(\Sigma'\right)^0_1} J(v) \ar{r}\ar{d} & 0\\
 & 0 & 0 &
\end{tikzcd}

By induction, $H_1(\cJ[\Sigma',\partial\Sigma'])=0$.  By Corollary~\ref{cor:deg2}, to prove that $H_1(\cJ[\Sigma])=0$ it suffices to prove that $H_1(\cJ[\Sigma])_2=0$.  The ideals across the top row have the form
\[
\begin{array}{cc}
J(\tau_i)\cong & \langle x^2 \rangle\\
J(v_i)\cong & \langle x,y \rangle^2\\
J(\nu)\cong & \langle x,y,z\rangle^2
\end{array}
\]
Hence in degree two we have
\[
\begin{array}{cc}
\bigoplus\limits_{i=1}^3 J(\tau_i)_2 \cong & \R^3\\
\bigoplus\limits_{i=1}^3 J(v_i)_2 \cong & \R^9\\
J(\nu)_2\cong & \R^3.
\end{array}
\]
The leftmost map of the top row is injective in degree two and the rightmost is surjective.  Since the Euler characteristic is zero, we have that the top row is exact in degree two.  The long exact sequence in homology then yields $H_1(\cJ[\Sigma])_2=0$.

For the remainder of the induction we assume $\Sigma$ is not complete and follow the argument of Whiteley~\cite[\S~4]{WhiteleyComb}.  To prove that $H_1(\cJ[\Sigma,\partial\Sigma])=0$, it suffices to show that, in degree $2$,
\[
\dim H_2(\cJ[\Sigma,\partial\Sigma])_2=\chi(\cJ[\Sigma,\partial\Sigma],2).
\]
$\cJ[\Sigma,\partial\Sigma]$ is concentrated in homological degrees one and two, and has the form
\[
\bigoplus_{\tau\in\Sigma^0_2} J(\tau) \xrightarrow{\delta_2} \bigoplus_{v\in\Sigma^0_1} J(v).
\]
The codomain of $\delta_2$ is contained in the free module $\bigoplus_{v\in\Delta^0_1} S$, where $S=\R[x,y,z]$ is the polynomial ring in three variables.  Since $J(\tau)\cong\langle l_\tau^2\rangle\cong S(-2)$, we can identify $\delta_2$ as a graded map between free $S$-modules of the form
\[
\bigoplus_{\tau\in\Sigma^0_2} S(-2) \xrightarrow{\delta_2} \bigoplus_{v\in\Sigma^0_1} S.
\]
Let $e_\tau,\tau\in \Sigma^0_2$ be the generators of $\bigoplus_{\tau\in\Sigma^0_2} S(-2)$, and $e_v,v\in\Sigma^0_1$ be generators of $\bigoplus_{v\in\Sigma^0_1} S$.  Then $\delta_2(e_\tau)=l^2_\tau e_{v_1} \pm l^2_\tau e_{v_2}$, where $e_{v_1},e_{v_2}$ are the interior rays on the boundary of $\tau$ and the signs come from orientations of the codimension one and two faces of $\Sigma$.  Hence $\delta_2$ is given in the chosen free basis by the matrix $N(\Sigma)$ whose columns are labelled by interior codimension one faces $\tau$, whose rows are labelled by interior rays, and whose entries are given by
\[
N_{v,\tau}=\left\lbrace
\begin{array}{ll}
\pm l^2_\tau & \mbox{if } v\in\tau\\
0 & \mbox{otherwise}.
\end{array}
\right.
\]
Let $N_2(\Sigma)$ be the restriction of $N(\Sigma)$ to degree $2$, i.e. $N_2(\Sigma)$ represents the map $\delta_2$ in degree $2$.  For generic $\Sigma$, $J(\tau)\cong \langle x^2 \rangle$ and $J(v)\cong \langle x^2,xy,y^2 \rangle$.  So in degree $2$, 
\[
\begin{array}{rl}
\dim \bigoplus_{\tau\in\Sigma^0_2} J(\tau)_2 & =f^0_2\\
\dim \bigoplus_{v\in\Sigma^0_1} J(v)_2 & =3f^0_1.
\end{array}
\]
Hence, viewed as a map between $\R$-vector spaces, $N_2(\Sigma)$ has $3f^0_1$ rows (three rows corresponding to each interior ray) and $f^0_2$ columns.  We obtain an explicit form for $N_2(\Sigma)$ by choosing a basis $A_v,B_v,C_v$ for forms of degree $2$ vanishing on any $v\in\Sigma^0_1$.  Then replace the entry $l_\tau^2$ in $N(\Sigma)$ by the $3\times 1$ column vector of coefficients expressing $l_\tau^2$ in terms of this basis.  An important observation is that $N_2(\Sigma)$ has entries which depend continuously on the (homogeneous) coordinates of the rays $v$.  Explicitly, if $\tau$ is a codimension one face joining two rays $v_1=\R_+(x_1,y_1,z_1),v_2=\R_+(x_2,y_2,z_2)$, then
\[
l_\tau=\det
\begin{bmatrix}
x_1 & x_2 & x \\
y_1 & y_2 & y\\
z_1 & z_2 & z
\end{bmatrix}.
\]
Let $f^b_i$ denote the number of boundary $i$-faces of $\Sigma$.  We have the relations
\[
\begin{array}{rl}
3f_3= & 2f^0_2+f^b_2\\
f^0_1-f^0_2+f_3 = & 1
\end{array}
\]
Together these yield $3f^0_1-f^0_2=3-f^b_2$.  Since $f^b_2\ge 3$, we have $3f^0_1\le f^0_2$, so to show that $H_1(\cJ[\Sigma,\partial\Sigma])=0$ generically, it suffices to show that $N_2(\Sigma)$ has full rank.  We show there are no dependencies among the rows of $N_2$ for generic $\Sigma$.

First we reduce to the case where $\lk(\Sigma)$ has triangular boundary.  Let $\Sigma'$ be a subfan of any non-complete fan $\Sigma\subset\R^3$, and order the interior rays and codimension one faces of $\Sigma$ so that those which are also interior faces of $\Sigma'$ appear first.  Then $N_2(\Sigma)$ has block form
\[
\begin{bmatrix}
N_2(\Sigma') & 0\\
A & B
\end{bmatrix},
\]
where the upper right block is a block of zeros.  This block of zeros is present because any codimension one face $\tau\in\Sigma^0_2$ which contains a ray $v\in\left(\Sigma'\right)^0_1$ is also an interior codimension one face of $\Sigma'$.  It follows that any relation among the rows of $N_2(\Sigma')$ immediately gives a relation among the rows of $N_2(\Sigma)$.  For an arbitrary non-complete fan $\Sigma'$, it is simple to build a fan $\Sigma$ having $\Sigma'$ as a subfan so that $\lk(\Sigma)$ has triangular boundary.  Since $\Sigma'$ is not complete, take a simplicial cone $\sigma$ so that $\sigma\cap\Sigma'=0$.  $\Sigma'$ is contained in a component $C$ of $\R^3\setminus\partial\sigma$.  Fill in the region $C\setminus\Sigma'$ with simplicial cones.  Together with $\Sigma'$, this creates a fan $\Sigma$ whose boundary is $\partial\sigma$.  Hence it suffices to prove that there are no relations among the rows of $N_2(\Sigma)$ when $\lk(\Sigma)$ has triangular boundary.

If $\lk(\Sigma)$ has triangular boundary, then $3f^0_1=f^0_2$ and $N_2(\Sigma)$ has the same number of rows as columns.  We now argue that the columns of $N_2(\Sigma)$ are independent.  This is the main induction, and it is performed on the number of interior rays.  As the base case, consider the fan $\Sigma$ with a single interior ray, three codimension one interior faces, and three codimension one boundary faces.  By changing coordinates we may assume the interior ray is the $z$-axis and the three interior codimension one faces are given by $x=0,y=0$, and $x-y=0$.  Then we have
\[
N(\Sigma)=
\begin{bmatrix}
x^2 & y^2 & (x-y)^2
\end{bmatrix}.
\]
These form a basis for forms of degree two in $x$ and $y$, hence the columns of $N_2(\Sigma)$ are independent.  

We will be terse in the remainder of the proof, since the argument for~\cite[Theorem~6]{WhiteleyComb} carries over almost verbatim (Whiteley uses the transpose of the matrix $N_2(\wDelta)$ we use here).  For the inductive step, we apply \textit{vertex splitting} and the inverse process of \textit{edge contraction} (edge \textit{shrinking} in~\cite{WhiteleyComb}).  Applied to the fan $\Sigma$ this process is one of splitting the rays $v\in\Sigma_1$ and contracting codimension one faces $\tau\in\Sigma_2$.  The split of an interior ray adds one interior ray, three interior codimension one faces, and two simplicial facets.  This process does not affect $\partial\Sigma$, hence $\lk(\Sigma)$ remains triangular.  Likewise, the reverse process of contracting an interior codimension one face joining two interior rays does not affect $\partial\Sigma$.  Any noncomplete fan $\Sigma\subset\R^3$ with at least two interior rays has a contractible codimension one face joining two interior rays.  This follows by taking a stereographic projection of $\lk(\Sigma)$ with center outside of $\Sigma$, and then applying~\cite[Lemma~5]{WhiteleyComb}, which says that any triangulated disk with at least two interior vertices has a contractible edge joining two interior vertices.  Such an edge has the property that it is not an edge of any non-facial three-cycle.

Now choose a contractible codimension one face $\tau$ of $\Sigma$ joining two interior rays $v_1,v_2$.  In the process of contracting $\tau$, two codimension one faces, call them $\tau_1,\tau_2$, collapse to two corresponding codimension one faces $e_1,e_2$ when $\tau$ is fully contracted.  Let $\Sigma'$ denote the fan obtained by contracting $\tau$, and let $v$ be the vertex which splits to create $v_1,v_2$.  $M_2(\Sigma)$ is obtained from $M_2(\Sigma')$ by replacing the row corresponding to $v$ by two rows corresponding to $v_1,v_2$ adding the column corresponding to $\tau$, and replacing the columns corresponding to $e_1,e_2$ each by two columns corresponding to $e_1,\tau_1$, $e_2,\tau_2$.  Since $M_2(\Sigma)$ has entries which depend continuously on the homogeneous coordinates of the ray $v_1$, we consider $\lim_{v_1\rightarrow v_0} M_2(\Sigma)$.  Whiteley shows that a nontrivial relation among the columns of $\lim_{v_1\rightarrow v_0} M_2(\Sigma)$  implies a relation among the columns of $M_2(\Sigma')$.  By induction, we assume $M_2(\Sigma')$ has independent columns, so no such relation exists.  Again, arguing by the continuous dependence of $\det M_2(\Sigma)$ on the homogeneous coordinates of $v_1$, most choices of coordinate $v_1$ with $v_1$ close to $v_0$ and $v_1\neq v_0$ yield $\det M_2(\Sigma)\neq 0$.  Hence the columns of $M_2(\Sigma)$ are linearly independent for generic choices of $\Sigma$.
\end{proof}

\begin{proof}[Proof of Theorem~\ref{thm:GenericC1}]
Set $\Sigma=\wDelta$, $S=\R[w,x,y,z]$.  Since we are considering uniform smoothness, $\Sigma^{-1}=\partial\Sigma$. $\cR/\cJ[\Sigma,\partial\Sigma]$ is concentrated in homological degrees $4,3,2,$ and $1$ and has the form
\begin{equation}\label{eqn:C1complex}
S^{f_4(\Sigma)}\rightarrow \bigoplus\limits_{\tau\in\Sigma^0_3} \dfrac{S}{J(\tau)} \rightarrow \bigoplus\limits_{\gamma\in\Sigma^0_2} \dfrac{S}{J(\gamma)} \rightarrow \bigoplus\limits_{v\in \Sigma^0_1} \dfrac{S}{J(v)}.
\end{equation}
We show first that $\chi_H(\cR/\cJ[\Sigma,\partial\Sigma],d)=f_3\binom{d+3}{3}-f^0_2(d+1)^2+f^0_1(3d+1)-4f^0_0$.  Since we assume $\Delta$ is generic, there are at least three interior codimension one faces meet along every edge $\gamma\in\Delta^0_1$ and at least $6$ codimension one faces meet at every vertex $v\in\Delta^0_0$.  Under these assumptions, for $\tau\in\Delta^0_2,\gamma\in\Delta^0_1,v\in\Delta^0_0$ we have
\[
\begin{array}{rl}
J(\tau)\cong & \langle x \rangle^2\\
J(\gamma)\cong & \langle x,y \rangle^2\\
J(v)\cong & \langle x,y,z \rangle^2.
\end{array}
\]
Given these equalities, it follows easily that $HP(S/J(\tau),d)=(d+1)^2$, $HP(S/J(\tau),d)=3d+1$, and $HP(S/J(v),d)=4$.  Also, $f_i(\Sigma)=f_{i-1}(\Delta)$ and $f^0_i(\Sigma)=f^0_{i-1}(\Delta)$.  Taking an alternating sum and using the well-known fact that $HP(S,d)=\binom{d+3}{3}$ gives $\chi_H(\cR/\cJ[\Sigma,\partial\Sigma],d)=f_3\binom{d+3}{3}-f^0_2(d+1)^2+f^0_1(3d+1)-4f^0_0$.

To complete the proof, it suffices by Corollary~\ref{cor:3DSimplicialHilbert} to show that
\[
H_2(\cR/\cJ[\Delta_v,\partial\Delta_v])=0
\]
for all $v\in\Delta_0$.  Equivalently we need to show that
\[
\dim C^1(\Delta_v)_d=\chi(\cR/\cJ[\Delta_v,\partial\Delta_v],d),
\]
for all $d\ge 0$ and all $v\in\Delta^0_0$.  This follows from Theorem~\ref{thm:2Dfan}.
\end{proof}

\section{Acknowledgements}

I thank Hal Schenck for his guidance and patient listening, and Jimmy Shan for helpful conversations.  Macaulay2~\cite{M2} was indispensable for performing the computations in this paper.  Three dimensional images were generated using Mathematica.

\end{document}